\begin{document}
\renewcommand{\a}{\alpha}
\newcommand{\D}{\Delta}
\newcommand{\ddt}{\frac{d}{dt}}
\counterwithin{equation}{section}
\newcommand{\e}{\epsilon}
\newcommand{\eps}{\varepsilon}
\newtheorem{theorem}{Theorem}[section]
\newtheorem{proposition}{Proposition}[section]
\newtheorem{lemma}[proposition]{Lemma}
\newtheorem{remark}{Remark}[section]
\newtheorem{example}{Example}[section]
\newtheorem{definition}{Definition}[section]
\newtheorem{corollary}{Corollary}[section]
\makeatletter
\newcommand{\rmnum}[1]{\romannumeral #1}
\newcommand{\Rmnum}[1]{\expandafter\@slowromancap\romannumeral #1@}
\makeatother

\title{Almost everywhere convergence of the convolution type Laguerre expansions}
\author{Longben Wei\\
{\small {\it  School of Mathematics and Statistics, Huazhong University of Science }}\\
{\small {\it and Technology, Wuhan, {\rm 430074,} P.R.China}}  \\
{\small {\it Email:longbenwei51@gmail.com}}\\
}
\date{}
\maketitle
\centerline{\bf Abstract }
 For a fixed d-tuple $\alpha=(\alpha_1,...,\alpha_d)\in(-1,\infty)^d$, consider the product space $\mathbb{R}_+^d:=(0,\infty)^d$ equipped with Euclidean distance $\arrowvert \cdot \arrowvert$ and the measure $d\mu_{\alpha}(x)=x_1^{2\alpha_1+1}\cdot\cdot\cdot x_{d}^{\alpha_d}dx_1\cdot\cdot\cdot dx_d$. We consider the Laguerre operator $L_{\alpha}=-\Delta+\sum_{i=1}^{d}\frac{2\alpha_j+1}{x_j}\frac{d}{dx_j}+\arrowvert x\arrowvert^2$ which is a compact, positive,  self-adjoint operator on $L^2(\mathbb{R}_+^d,d\mu_{\alpha}(x))$. In this paper, we study almost everywhere convergence of the Bochner-Riesz means associated with $L_\alpha$ which is defined by $S_R^{\lambda}(L_\alpha)f(x)=\sum_{n=0}^{\infty}(1-\frac{e_n}{R^2})_{+}^{\lambda}P_nf(x)$. Here $e_n$ is n-th eigenvalue of $L_{\alpha}$, and $P_nf(x)$ is the n-th Laguerre spectral projection operator. This corresponds to the convolution-type Laguerre expansions introduced in Thangavelu's lecture \cite{TS3}. The fact that the kernel of this type of Laguerre expansion is a radial function suggests that we can examine the sharpness of summability indices by radial functions. For $2\leq p<\infty$, we prove that 
 $$\lim_{R\rightarrow\infty} S_R^{\lambda}(L_\alpha)f=f\,\,\,\,-a.e.$$
 for all $f\in L^p(\mathbb{R}_+^d,d\mu_{\alpha}(x))$, provided that $\lambda>\lambda(\alpha,p)/2$, where $\lambda(\alpha,p)=\max\{2(\arrowvert\alpha\arrowvert_1+d)(1/2-1/p)-1/2,0\}$, and $\arrowvert\alpha\arrowvert_1:=\sum_{j=1}^{d}\alpha_{j}$.  Conversely, if $2\arrowvert\alpha\arrowvert_{1}+2d>1$, we will show the convergence generally fails if $\lambda<\lambda(\alpha,p)/2$ in the sense that there is an $f\in L^p(\mathbb{R}_+^d,d\mu_{\alpha}(x))$ for $(4\arrowvert\alpha\arrowvert_{1}+4d)/(2\arrowvert\alpha\arrowvert_{1}+2d-1)< p$ such that the convergence fails. When $2\arrowvert\alpha\arrowvert_{1}+2d\leq1$, our results show that a.e. convergence holds for $f\in L^p(\mathbb{R}_+^d,d\mu_{\alpha}(x))$ with $p\geq 2$ whenever $\lambda>0$.
 \newline 
\newline
\noindent{\bf 2020 Mathematics Subject Classification:} 42B25, 42B15, 42C10, 35P10.
\newline 
\noindent{\bf Key words:} Bochner-Riesz means, Almost everywhere convergence, Laguerre expansions, Laguerre operator.
\section{Introduction}
For $\lambda\geq0$ and $R>0$, the classical Bochner-Riesz means for the Laplacian on $\mathbb{R}^d$ are defined by
\begin{equation*}\label{Section1.1}
S_R^{\lambda}f(x)=\int_{\mathbb{R}^d}e^{2\pi ix\cdot\xi}\left(1-\frac{\arrowvert\xi\arrowvert^2}{R^2}\right)_{+}^{\lambda}\hat{f}(\xi)d\xi,\,\,\,\forall \xi\in\mathbb{R}^d.
\end{equation*}
Here $t_{+}=\max\{0,t\}$ for $t\in\mathbb{R}$ and $\hat{f}$ denotes the Fourier transform of $f$. The longstanding open problem known as the \emph{Bochner-Riesz conjecture} is that, for $1\leq p\leq\infty$ and $p\neq2$, $S_R^{\lambda}f\rightarrow\,f$ in $L^p(\mathbb{R}^d)$ norm if and only if 
\begin{equation}\label{Section1.2}
\lambda>\lambda(p):=\max\{d\arrowvert\frac{1}{2}-\frac{1}{p}\arrowvert-\frac{1}{2},0\}.
\end{equation}
It was shown by Herz \cite{Herz} that the condition \eqref{Section1.2} on \(\lambda\) is necessary for $S_R^{\lambda}f\rightarrow\,f$ in $L^p(\mathbb{R}^d)$ norm. In two dimensions, the conjecture has been proved by Carleson and Sj\"{o}lin \cite{Carleson}. In higher dimensions, only partial results are known, see \cite{Bourgain,Guth,Lee,Tao2, Tao3} and references therein. However, the conjecture still remains open for $d\geq3$. 

Concerning pointwise convergence, the problem of determining the optimal summability index $\lambda$ (depending on $p$) for which $S_R^{\lambda}f\rightarrow f$ almost everywhere for every $f\in L^p({\mathbb{R}^d})$ has also been extensively studied by various authors. In particular, for $2\leq p\leq\infty$, this problem was essentially settled by Carbery, Rubio de Francia and Vega \cite{Carbery1}. They showed that a.e. convergence holds for all $f\in L^p({\mathbb{R}^d})$ if $\lambda$ satisfies the condition \eqref{Section1.2} provided $p\geq2$. Discussion on the necessity of the condition \eqref{Section1.2} can be found in \cite{Lee2}. It should be mentioned that almost everywhere convergence of $S_R^{\lambda}f$ with $f\in L^p$, $1 < p < 2$, exhibits different nature and few results are known in this direction except when dimension $d=2$ (\cite{Li,Tao,Tao1}).

For a general positive self-adjoint operator which admits a spectral decomposition $L=\int_{0}^{\infty}t dE_{L}(t)$ in a $L^2$ space, where $dE_{L}$ denotes the spectral measure associated with $L$. The Bochner-Riesz means associated with $L$ are defined by
\begin{equation*}
S_{R}^{\lambda}(L)(f):=\int_{0}^{R^2}(1-\frac{t}{R^2})^{\lambda}dE_{L}(t)f.
\end{equation*}
The Bochner-Riesz means for elliptic operators (e.g. Schr\"{o}dinger operator with potentials, the Laplacian on manifolds) have attracted a lot of attention and have been studied extensively by many authors. See, for example, \cite{COSY, CS, DOS, GHS, LR, JLR, JR, KST, MYZ, S} and references therein.
\par In this paper we are concerned with Bochner-Riesz means associated with the Laguerre operator on $\mathbb{R}_{+}^{d}:=(0,\infty)^{d}$, which is given by 
\begin{equation}\label{Laguerre}
L_{\alpha}=-\Delta+\sum_{j=1}^{d}\frac{2\alpha_j+1}{x_j}\frac{d}{dx_j}+\arrowvert x\arrowvert^2,
\end{equation}
where $\alpha=(\alpha_1,\alpha_2,\cdot\cdot\cdot,\alpha_d)\in (-1,\infty)^d$. Laguerre operator is closely related to the twisted Laplacian (special Hermite operator) in the following sense (see \cite{SVK}): If $f\in \mathcal{S}(\mathbb{C}^{d})$ is radial, then $\mathcal{L}f(z)=\frac{1}{2}L_{d-1}f(\sqrt{2}t)$, where $\arrowvert z\arrowvert=\sqrt{2}t$, $L_{d-1}$ is the 1-dimensional Laguerre operator of type $d-1$, $\mathcal{L}$ is the twisted Laplacian on $\mathbb{C}^{d}$ given by 
\begin{equation*}
\mathcal{L}=-\sum_{j=1}^{d}\left(\left(\frac{\partial}{\partial x_j}-\frac{i}{2}y_j\right)^2+\left(\frac{\partial}{\partial y_j}+\frac{i}{2}x_j\right)^2\right),\quad (x,y)\in \mathbb{R}^{d}\times\mathbb{R}^{d}.
\end{equation*}
\subsection{Bochner-Riesz means associated with the Laguerre operator \(L_{\alpha}\)}
The one-dimensional Laguerre polynomial $L_n^{\alpha}(x)$ of type $\alpha>-1$ are defined by the generating function identity
$$\sum\limits_{n=0}^{\infty}L_n^{\alpha}(x)t^{k}=(1-t)^{-\alpha-1}e^{-\frac{xt}{1-t}},\,\,\,\arrowvert t\arrowvert<1.$$
Here $x>0$, $n\in \mathbb{Z}_{\geq 0}$ and $\mathbb{Z}$ is the standard symbol of the integer set. Each $L_n^{\alpha}(x)$ is a polynomial of degree $n$ explicity given by 
$$L_n^{\alpha}(x)=\sum\limits_{j=0}^{n}\frac{\Gamma(n+\alpha+1)}{\Gamma(n-j+1)\Gamma(j+\alpha+1)}\frac{(-x)^j}{j!}.$$
 It is well-known that the Laguerre  polynomials are orthonormal on $\mathbb{R}_{+}=(0,\infty)$ with respect to the measure $e^{-x}x^{\alpha}dx$. The Laguerre functions $\varphi_n^{\alpha}(x)=\left(\frac{2n!}{\Gamma(n+\alpha+1)}\right)^{1/2}L_n^{\alpha}(x^2)e^{-x^2/2}$ form a complete orthonormal family in $L^2(\mathbb{R}_+,x^{2\alpha+1}dx)$.  
It is well known that each $\varphi_n^{\alpha}(x)$ is an eigenfunction of 1-dimensional $L_{\alpha}$ with eigenvalue $(4n+2\alpha+2)$, i.e.,
\begin{equation*}
L_{\alpha}\varphi_n^{\alpha}(x)=(4n+2\alpha+2)\varphi_n^{\alpha}(x).
\end{equation*}
The operator $L_{\alpha}$ is non-negative and self-adjoint respect to the measure $x^{2\alpha+1}dx$ on $\mathbb{R}_+$.
\par Now for each multiindex $\mu=(\mu_1,...,\mu_d)\in \mathbb{Z}^{d}_{\geq0}$ and $\alpha=(\alpha_1,...,\alpha_d)\in (-1,\infty)^d$, the $d$-dimensional Laguerre functions are defined by the tensor product of the 1-dimensional Laguerre functions (see, e.g., \cite{SVK})
 \begin{equation*}
\varphi_{\mu}^{\alpha}(x)=\prod\limits_{j=1}^{d}\varphi_{\mu_j}^{\alpha_j}(x_j),\quad x\in \mathbb{R}_{+}^{d}.
\end{equation*}
 The corresponding $d$-dimensional Laguerre operator $L_{\alpha}$ for $\alpha=(\alpha_1,...,\alpha_d)\in (-1,\infty)^d$ is defined as the sum of 1-dimensional Laguerre operators $L_{\alpha_j}$ given by \eqref{Laguerre}.
It is straightforward to get that $L_{\alpha}\varphi_{\mu}^{\alpha}(x)=(4\arrowvert \mu\arrowvert_1 +2\arrowvert \alpha \arrowvert_1 +2d)\varphi_{\mu}^{\alpha}(x)$, where $\arrowvert \mu\arrowvert_1=\sum_{j=1}^{d}\mu_j$, $\arrowvert \alpha\arrowvert_1=\sum_{j=1}^{d}\alpha_j$. Hence, $\varphi_{\mu}^{\alpha}(x)$ are eigenfunctions of $L_{\alpha}$ with eigenvalue $4\arrowvert \mu\arrowvert_1 +2\arrowvert \alpha \arrowvert_1 +2d$ and they also form a complete orthonormal system in $L^{2}(\mathbb{R}_{+}^d,d\mu_{\alpha}(x))$. The d-dimensional Laguerre operator $L_{\alpha}$ is self-adjoint with respect to measure $d\mu_{\alpha}(x):=x_1^{2\alpha_1+1}\cdot\cdot\cdot x_{d}^{2\alpha_d+1}dx_1\cdot\cdot\cdot dx_d$. Throughout this paper, for $1\leq p<\infty$, we denote by \( L^p(\mathbb{R}_{+}^d,d\mu_{\alpha}(x)) \) the Banach space consisting of \( \mu_{\alpha} \)-measurable functions on \( \mathbb{R}_{+}^d \), equipped with the norm
\begin{equation}\label{Norm}
\lVert f \rVert_{p} := \left( \int_{\mathbb{R}_{+}^d} \lvert f(x) \rvert^p \, d\mu_{\alpha}(x) \right)^{1/p}<\infty.
\end{equation}
\par For function \(f\in L^{2}(\mathbb{R}_{+}^d,d\mu_{\alpha}(x))\), it has the  Laguerre expansion
\begin{equation}\label{exp1}
f(x)=\sum_{\mu\in\mathbb{Z}_{\geq 0}}\langle f, \varphi_{\mu}^{\alpha}\rangle_{\alpha}\varphi_{\mu}^{\alpha}(x)=\sum_{n=0}^{\infty}P_{n}f(x),
\end{equation}
where $\langle\cdot,\cdot \rangle_{\alpha}$ is the inner product inherited from $L^{2}(\mathbb{R}_{+}^d,d\mu_{\alpha}(x))$ and 
\begin{equation}\label{Projection}
P_{n}f(x):=\sum_{\arrowvert\mu\arrowvert_1=n}\langle f, \varphi_{\mu}^{\alpha}\rangle_{\alpha}\varphi_{\mu}^{\alpha}(x).
\end{equation}
We note that expansion \eqref{exp1} is called Laguerre expansions of convolution type, introduced in Thangavelu's lecture \cite{TS3}. An interesting point about this expansion is that it is in a weighted \( L^2 \) space. Regarding this type of expansion, the existing literature primarily focuses on the study of Ces\`{a}ro means for this orthogonal system. For $R>0$, the $R>0$-th Ces\`{a}ro means of order $\lambda$ are defined by 
\begin{equation*}
C_{R}(\lambda)f(x):=\frac{1}{A_{R^2}(\lambda)}\sum_{n\leq R^2}A_{R^2-n}P_nf(x),
\end{equation*}
where $A_k(\lambda)=\Gamma(k+\lambda+1)/\Gamma(k+1)\Gamma(\lambda+1)$. 
\par For $R>0$, the $R>0$-th Bochner-Riesz means for the Laguerre operator $L_{\alpha}$ of order $\lambda\geq0$  is defined by 
\begin{equation}\label{Bochner}
S_{R}^{\lambda}(L_{\alpha})f(x)=\sum_{n=0}^{\infty}\left(1-\frac{e_n}{R^2}\right)_{+}^{\lambda}P_nf(x),
\end{equation}
where $e_n=4n+2\arrowvert \alpha\arrowvert_1+2d$ is the n-th eigenvalue of $L_{\alpha}$. The assumption $\lambda\geq0$ is necessary for $S_{R}^{\lambda}(L)$ to be defined for all $R>0$, see \cite{CD}. To proceed with our statement, we define
\begin{equation}
\lambda(\alpha,p):=\max\{2(\arrowvert\alpha\arrowvert_1+d)\arrowvert1/2-1/p\arrowvert-1/2,0\}.
\end{equation}
\par According to a theorem by Gergen \cite{Gn}, as pointed out by Thangavelu in \cite{TS}, the Bochner-Riesz means can be expressed in terms of the Cesàro means and vice versa. Moreover,  the $L^p$ norm convergence of one type of mean implies that of the other. Thus, when concerning the $L^p$ convergence of $S_{R}^{\lambda}(L_{\alpha})f$, in one dimension, it is shown in \cite{TS3} that $S_{R}^{\lambda}(L_{\alpha})f$ converges in $L^{p}(\mathbb{R}_{+}, x^{2\alpha+1}dx)$ for $1\leq p\leq \infty$, whenever $\alpha\geq 0$ and $\lambda> \alpha+\frac{1}{2}$. For $\alpha\geq 0$ and $0<\lambda\leq \alpha+\frac{1}{2}$, $S_{R}^{\lambda}(L_{\alpha})f$ converges in $L^{p}(\mathbb{R}_{+}, x^{2\alpha+1}dx)$ for $1< p<\infty$ if and only if $\lambda>\lambda(\alpha,p)$. In higher dimensions ($d\geq2$), the $L^p$ convergence of $S_{R}^{\lambda}(L_{\alpha})f$ is not as well understood. For $\alpha=(\alpha_1,...,\alpha_d)\in (-1,\infty)^d$ and $1\leq p\leq\infty$, it is shown by Xu in \cite {XY} that if $\alpha_i\geq0$, for all $1\leq i\leq d$, then $S_{R}^{\lambda}(L_{\alpha})f$ converges in $L^{p}(\mathbb{R}_{+}, x^{2\alpha+1}dx)$ whenever $\lambda>\arrowvert \alpha\arrowvert_1+d-1/2$. Moreover, for $p=1$ and $\infty$, $S_{R}^{\lambda}(L_{\alpha})$ converge in $L^{p}(\mathbb{R}_{+}^d, d\mu_{\alpha}(x))$ if and only if $\lambda> \arrowvert \alpha\arrowvert_1+d-1/2=\lambda(\alpha,p)$. For $\lambda\leq \arrowvert \alpha\arrowvert_1+d-1/2$ and $1< p<\infty$, it is also conjectured in \cite{XY} that $S_{R}^{\lambda}(L_{\alpha})f$ converges in $L^{p}(\mathbb{R}_{+}, x^{2\alpha+1}dx)$ if and only if $\lambda>\lambda(\alpha,p)$. Xu also proved that $S_{R}^{\lambda}(L_{\alpha})f$ converges in $L^{p}(\mathbb{R}_{+}, x^{2\alpha+1}dx)$ if and only if $\lambda>\lambda(\alpha,p)$ under the assumption that $f$ is radial, thus the condition $\lambda>\lambda(\alpha,p)$ is necessary for $L^p$ convergence of $S_{R}^{\lambda}(L_{\alpha})f$.
\subsection{Almost everywhere convergence}
Motivated by the recent work of Chen-Duong-He-Lee-Yan \cite{CD} on a.e. convergence of Bochner-Riesz means for the Hermite operator $H:=-\Delta +\arrowvert x\arrowvert^2$, this paper is concerned with the almost everywhere convergence of $S_{R}^{\lambda}(L_{\alpha})f$. Specifically, we aim to characterize the values of $\lambda$ for which 
\begin{equation}\label{Almost}
\lim_{t\rightarrow\infty}S_{R}^{\lambda}(L_{\alpha})f(x)=f(x) \quad a.e.\quad\quad\forall f\in L^p(\mathbb{R}_{+}^{d},d\mu_{\alpha}(x))
\end{equation}
for $2\leq p<\infty$, where ``a.e."
 refers to convergence almost everywhere with respect to the measure $d\mu_{\alpha}(x)$. Compared with $L^p$ convergence of $S_{R}^{\lambda}(L_{\alpha})f$, its a.e. convergence has not been well studied, and the existing literature on this topic is quite limited. In one dimension, when $\alpha=k-1 \in \mathbb{Z}_{\geq 0}$, Thangavelu showed in \cite{TS1} that \eqref{Almost} holds for $2\leq p\leq\infty$ if $\lambda>(2k-1)(1/2-1/p)$, where these summability indices exceed $\lambda(\alpha,p)=2k(1/2-1/p)$. This result was based on his study of the almost everywhere convergence of Bochner-Riesz means for the twisted Laplacian, specifically when restricted to radial functions. Recent advancements in the almost everywhere convergence of Bochner-Riesz means for the twisted Laplacian \cite{JLR} imply that, for $d=1$ and $\alpha=k-1 \in \mathbb{Z}_{\geq 0}$ , then \eqref{Almost} holds for $2\leq p\leq\infty$ if $\lambda>\lambda(\alpha,p)/2$, thereby extending  Thangavelu's finding \cite{TS1} to a broader range. However, for general $\alpha>-1$ and in high-dimensional settings, we have not found any relevant research on this topic.
\par The following is the main result of this paper, which almost completely settles the a.e. convergence problem of $S_{R}^{\lambda}(L_{\alpha})f$ except the endpoint cases. Here, $\alpha \in(-1,\infty)^d$ is general and the dimension $d$ is arbitrary.
\begin{theorem}\label{T1.1}
Let $\alpha \in(-1,\infty)^d$, $2\leq p<\infty$, and $\lambda>0$. Then, for any $f\in L^p(\mathbb{R}_{+}^{d},d\mu_{\alpha}(x))$, we have \eqref{Almost} whenever $\lambda>\lambda(\alpha,p)/2$. In particular, for $2(\arrowvert \alpha\arrowvert_1+d)\leq 1$, \eqref{Almost} holds for all $f\in L^p(\mathbb{R}_{+}^{d},d\mu_{\alpha}(x))$ whenever $\lambda>0$. Conversely, if \eqref{Almost} holds for all $f\in L^p(\mathbb{R}_{+}^{d},d\mu_{\alpha}(x))$ with $2(\arrowvert \alpha\arrowvert_1+d)> 1$ and $(4\arrowvert\alpha\arrowvert_{1}+4d)/(2\arrowvert\alpha\arrowvert_{1}+2d-1)<p<\infty$, we have $\lambda\geq\lambda(\alpha,p)/2$.
\end{theorem}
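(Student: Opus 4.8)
The plan is as follows. Throughout set $N:=2(|\alpha|_1+d)$, which plays the role of an effective dimension: a direct computation with the weight gives $\mu_\alpha(B(x,r))\simeq r^{d}\prod_{j=1}^{d}(x_j+r)^{2\alpha_j+1}$, so $(\mathbb{R}_+^d,|\cdot|,d\mu_\alpha)$ is a doubling metric measure space which is locally $d$-dimensional but has upper dimension $N$, and for $2\le p<\infty$ the quantity $\lambda(\alpha,p)=\max\{N(\tfrac12-\tfrac1p)-\tfrac12,0\}$ is exactly the classical Bochner--Riesz index in dimension $N$. With the correct sign convention for the first order term, $L_\alpha$ restricted to radial functions $f(x)=F(|x|)$ becomes the one-dimensional Laguerre operator $L_{N/2-1}=-\partial_r^2-\tfrac{N-1}{r}\partial_r+r^2$ on $L^p(\mathbb{R}_+,r^{N-1}\,dr)$, isometrically up to a constant; this radial subspace is $L_\alpha$-invariant, and the projections $P_n$ act on it with radial kernel $\varphi_n^{N/2-1}(|x|)\varphi_n^{N/2-1}(|y|)$. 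This is the precise sense in which, as noted in the introduction, the kernel of these Laguerre expansions is radial, and it is what makes radial test functions available for the sharpness analysis. The proof splits into sufficiency of $\lambda>\lambda(\alpha,p)/2$ and necessity of $\lambda\ge\lambda(\alpha,p)/2$.

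\emph{Sufficiency.} Finite linear combinations of the $\varphi_\mu^\alpha$ are dense in $L^p(\mathbb{R}_+^d,d\mu_\alpha)$ for $2\le p<\infty$, and for such $f$ the sum in \eqref{Bochner} is finite, so $S_R^\lambda(L_\alpha)f\to f$ uniformly as $R\to\infty$ and \eqref{Almost} holds trivially. Hence, by a routine density argument, \eqref{Almost} for all $f\in L^p$ follows once we establish a maximal inequality $\big\|\sup_{R>0}|S_R^\lambda(L_\alpha)f|\big\|_{L^{p,\infty}(d\mu_\alpha)}\lesssim\|f\|_{L^p(d\mu_\alpha)}$ for $2\le p<\infty$ and $\lambda>\lambda(\alpha,p)/2$; it must be only a weak-type bound in general, since a strong $L^p\to L^p$ bound throughout $\lambda>\lambda(\alpha,p)/2$ would force $L^p$-norm convergence below the threshold $\lambda(\alpha,p)$ that is known to be necessary by \cite{XY}. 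To prove it I would follow the square-function scheme of Carbery--Rubio de Francia--Vega \cite{Carbery1}: after a dyadic decomposition in $R$ removing the trivial contribution of $S_R^\lambda f$ as $R\to\infty$, the identity $R\,\partial_R S_R^\lambda(L_\alpha)=2\lambda\big(S_R^{\lambda-1}(L_\alpha)-S_R^\lambda(L_\alpha)\big)$, the Cauchy--Schwarz inequality $\sup_R|S_R^\lambda f|^2\lesssim\big(\int|S_R^\lambda f|^2\tfrac{dR}{R}\big)^{1/2}\big(\int|R\partial_R S_R^\lambda f|^2\tfrac{dR}{R}\big)^{1/2}$, and a Lorentz-space Hölder inequality reduce matters to $L^p$-type bounds, for $p\ge2$, of the Stein square functions
$$\mathfrak{g}^\sigma f(x):=\Big(\int_0^\infty\big|S_R^\sigma(L_\alpha)f(x)-S_{2R}^\sigma(L_\alpha)f(x)\big|^2\,\tfrac{dR}{R}\Big)^{1/2},\qquad \sigma>\lambda(\alpha,p)/2 .$$
At $p=2$ these hold for every $\sigma>0$ by the spectral resolution $\sum_n\|P_nf\|_2^2=\|f\|_2^2$; the content is the estimate for $p$ large, for which I would combine three ingredients: (i) the explicit Mehler-type formula for $e^{-tL_\alpha}$, which gives Gaussian heat-kernel bounds and hence finite speed of propagation for $\cos(t\sqrt{L_\alpha})$ on the space of homogeneous type above, placing us within the spectral-multiplier framework of \cite{DOS,COSY}; (ii) the arithmetic fact $\mathrm{spec}(L_\alpha)\subset 4\mathbb{Z}_{\ge0}+2|\alpha|_1+2d$, so that the Schr\"odinger propagator $e^{-itL_\alpha}$ is periodic (period $\pi/2$, up to a unimodular scalar) --- this is the mechanism, exactly as for the Hermite operator in \cite{CD} and the twisted Laplacian in \cite{JLR}, responsible for the gain of the factor $1/2$ over the dimension-$N$ index $\lambda(\alpha,p)$; and (iii) the sharp fixed-scale estimate for a single smooth dyadic piece of $S_R^\sigma(L_\alpha)$, where the radiality of the $P_n$-kernel lets one compare with one-dimensional Laguerre (Hankel-type) Bochner--Riesz of type $N/2-1$ at the critical index. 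Summing these pieces by an almost-orthogonality argument in the spectral parameter and interpolating with $p=2$ yields the square-function bound, hence the maximal inequality and \eqref{Almost}. When $2(|\alpha|_1+d)\le1$ one has $\lambda(\alpha,p)=0$ for all $p\ge2$, and the assertion is the subcase $\sigma>0$.

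\emph{Necessity.} Assume \eqref{Almost} holds for all $f\in L^p$, with $2(|\alpha|_1+d)>1$ and $p>(4|\alpha|_1+4d)/(2|\alpha|_1+2d-1)=2N/(N-1)$, precisely the range in which $\lambda(\alpha,p)>0$. Each $S_R^\lambda(L_\alpha)$ is a finite sum of spectral projections, hence continuous from $L^p(d\mu_\alpha)$ into the space of measurable functions, so Nikishin's theorem applies: for every ball $B_0$ away from the coordinate hyperplanes (where $d\mu_\alpha\simeq dx$) and every $\varepsilon>0$ there is $B_0'\subset B_0$ with $\mu_\alpha(B_0\setminus B_0')<\varepsilon$ such that
$$\big\|\mathbf{1}_{B_0'}\,S_R^\lambda(L_\alpha)f\big\|_{L^{2,\infty}(d\mu_\alpha)}\ \le\ C\,\|f\|_{L^p(d\mu_\alpha)}\qquad\text{uniformly in }R>0,$$
the exponent being $\min(p,2)=2$ since $p\ge2$. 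It then suffices to violate this for $0<\lambda<\lambda(\alpha,p)/2$. I would do so by a Herz-type argument: test the inequality against suitable functions --- the Laguerre functions $\varphi_\mu^\alpha$ with $|\mu|_1\simeq R^2/4$ concentrated in a single coordinate, and/or the radial kernel $y\mapsto\overline{K_R^\lambda(x_0,y)}\,\mathbf{1}_{B_1}(y)$ of $S_R^\lambda(L_\alpha)$ centred at a bulk point $x_0\in B_0'$ --- estimating the right-hand side using the known $L^p$-asymptotics of Laguerre functions (cf.\ \cite{XY}) and the oscillatory asymptotics of the radial Laguerre--Bochner--Riesz kernel near the spectral edge; the factor $1/2$ enters precisely because Nikishin's theorem produces the $L^{2,\infty}$ (rather than $L^{p,\infty}$) endpoint at $p\ge2$. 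For $N$ an even integer one may instead transplant the known sharpness examples for the twisted Laplacian on $\mathbb{C}^{N/2}$ from \cite{JLR,TS1} via the radial reduction. In each case the ratio in the last display is forced to grow in $R$ when $\lambda<\lambda(\alpha,p)/2$, contradicting the inequality and giving $\lambda\ge\lambda(\alpha,p)/2$.

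\emph{Main obstacle.} The heart of the matter is the weak-type square-function estimate $\|\mathfrak{g}^\sigma f\|_{L^p(d\mu_\alpha)}\lesssim\|f\|_{L^p(d\mu_\alpha)}$ for $2\le p<\infty$ and $\sigma>\lambda(\alpha,p)/2$: it must simultaneously see the ``dimension $N$'' size of the underlying space --- which dictates the critical index in the fixed-scale Hankel-type estimate --- and extract the factor-$1/2$ improvement from the periodicity of $e^{-itL_\alpha}$, and it must do so uniformly over the whole range $2\le p<\infty$ and for arbitrary $\alpha\in(-1,\infty)^d$, in particular when some $\alpha_j<0$ and the measure degenerates along the face $x_j=0$, where the heat-kernel and kernel estimates need extra care. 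On the necessity side the parallel difficulty is to carry out the sharpness computation for general, non-integer $N$, where no twisted-Laplacian model is available and one must argue intrinsically with $L_{N/2-1}$.
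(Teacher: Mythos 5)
Your sufficiency argument has a genuine gap at its crux. You reduce \eqref{Almost} to an unweighted weak-type maximal inequality on $L^p(d\mu_\alpha)$ and then to $L^p(d\mu_\alpha)$ bounds for the Stein square functions $\mathfrak{g}^\sigma$ for all $\sigma>\lambda(\alpha,p)/2$, but no proof of these bounds is offered, and the ingredients you list do not assemble into one. Two concrete problems. First, the reduction via $R\,\partial_R S_R^\lambda=2\lambda\bigl(S_R^{\lambda-1}-S_R^\lambda\bigr)$ involves means of order $\lambda-1$, which is negative precisely in the main regime $\lambda<1$ (e.g.\ $p$ close to $2N/(N-1)$, where $\lambda(\alpha,p)/2$ is small); then $(1-t)_+^{\lambda-1}$ is an unbounded multiplier and even the $L^2$ theory for that factor fails. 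This is exactly why \cite{Carbery1}, \cite{CD} and the present paper do not use that identity but instead decompose the multiplier dyadically near the spectral edge (cf.\ \eqref{SI} and \eqref{S1}) and estimate the localized square functions $\mathfrak{S}_\delta$. Second, the unweighted estimate $\lVert\mathfrak{g}^\sigma f\rVert_{L^p(d\mu_\alpha)}\lesssim\lVert f\rVert_{L^p(d\mu_\alpha)}$ for every $\sigma>\lambda(\alpha,p)/2$ is far stronger than anything the cited techniques produce: in the Euclidean model the analogous square function requires order at least $d(1/2-1/p)$, well above $\lambda(p)/2$, and at high frequency near a bulk point the Laguerre spectral measure is Euclidean-like, so the halved index cannot be expected in unweighted $L^p$; neither \cite{CD} (Hermite) nor this paper claims any unweighted $L^p$ square-function or weak $(p,p)$ maximal bound. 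Relatedly, your mechanism (ii) misidentifies where the factor $1/2$ comes from: it is not periodicity of $e^{-itL_\alpha}$ but the eigenfunction-concentration (trace) estimate $\lVert P_n\rVert_{L^2\to L^2((1+\lvert x\rvert)^{-\beta}d\mu_\alpha)}\lesssim n^{-1/4}$ of Lemma \ref{Trace}, which only becomes available after introducing the inhomogeneous weight. The paper's actual route is to prove the weighted $L^2$ maximal bound of Theorem \ref{T1.2} (square functions $\mathfrak{S}_\delta$ on $L^2((1+\lvert x\rvert)^{-\beta}d\mu_\alpha)$ with $0\le\beta<2(\lvert\alpha\rvert_1+d)$, using the trace lemma, the weighted bound of Lemma \ref{weighted} for low frequencies, and finite propagation speed for high frequencies), and then to deduce \eqref{Almost} for $f\in L^p$ from the H\"older embedding $L^p(d\mu_\alpha)\subset L^2((1+\lvert x\rvert)^{-\beta}d\mu_\alpha)$ with $\beta>2(\lvert\alpha\rvert_1+d)(1-2/p)$; no $L^{p,\infty}(d\mu_\alpha)$ maximal inequality is proved, nor is one needed.

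Your necessity outline is much closer to the paper, but it stops short of the decisive computation. The paper applies the Nikishin--Maurey theorem to $S_R^\lambda(L_\alpha)$ restricted to radial functions (so the resulting weight is radial), uses the exact identity $P_nf=R_n^{\lvert\alpha\rvert_1+d-1}(f_0)\,\varphi_n^{\lvert\alpha\rvert_1+d-1}(\lvert x\rvert)$ of Lemma \ref{L4.1} to pass to one-dimensional Laguerre functions of type $\lvert\alpha\rvert_1+d-1$, isolates $P_n$ with a factor $n^\lambda$ via the subordination formula, and then concludes from the weak-norm lower bound $\gtrsim n^{-1/4}$ (Lemma \ref{L4.3}) together with $\lVert\varphi_n^{\lvert\alpha\rvert_1+d-1}\rVert_{L^{p'}(r^{2(\lvert\alpha\rvert_1+d)-1}dr)}\sim n^{(\lvert\alpha\rvert_1+d)(1/2-1/p)}$ (Lemma \ref{L4.4}). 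Your plan (local Nikishin on a ball off the coordinate hyperplanes, then a Herz-type test against Laguerre functions or the Riesz kernel) is plausible in outline and correctly locates the source of the $1/2$ in the $L^{2,\infty}$ exponent from Nikishin, but the asymptotic estimates that actually force $\lambda\ge\lambda(\alpha,p)/2$ are not carried out, so as written this part is a program rather than a proof.
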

From our Theorem \ref{T1.1}, we observe that the critical summability index for almost everywhere convergence is half of that required for $L^p$ convergence. A similar result was established in \cite{CD,JLR} for the Bochner-Riesz means associated with the Hermite operator and the twisted Laplacian. This improvement in the summability index is related to the fact that these operators have discrete spectra that are bounded away from zero, which allows the inhomogeneous weight $(1+\arrowvert x\arrowvert )^{-\beta}$ used in the following Theorem \ref{T1.2} to be effective. Additionally, we note that when $\alpha=(-\frac{1}{2}, -\frac{1}{2},\cdot\cdot\cdot, -\frac{1}{2})$, the Laguerre operator $L_{\alpha}$ reduces to Hermite operator on $\mathbb{R}_{+}^{d}$, and in this case, the summability indices in Theorem \ref{T1.1} align with those studied in \cite{CD}.
\par The sufficiency part of Theorem \ref{T1.1} relies on the maximal estimate, a standard tool in the study of almost everywhere convergence. To prove \eqref{Almost}, we introduce the corresponding maximal operator \( S_{*}^{\lambda}(L_{\alpha}) \), defined by
\[
S_{*}^{\lambda}(L_{\alpha})f(x) := \sup_{R>0} \left| S_{R}^{\lambda}(L_{\alpha})f(x) \right|.
\]
We will establish the following weighted estimate, from which the almost everywhere convergence of \( S_{R}^{\lambda}(L_{\alpha})f \) follows through standard arguments.
\begin{theorem}\label{T1.2}
Let $\alpha \in(-1,\infty)^d$ and $0\leq\beta<2(\arrowvert \alpha\arrowvert_1 +d)$. The maximal operator $S_{*}^{\lambda}(L_{\alpha})$ is bounded on $L^2(\mathbb{R}_{+}^{d},(1+\arrowvert x\arrowvert )^{-\beta}d\mu_{\alpha}(x))$ if 
\begin{equation}
\lambda>\max\{\frac{\beta-1}{4},0\}.
\end{equation}
Conversely, if $S_{*}^{\lambda}(L_{\alpha})$ is bounded on $L^2(\mathbb{R}_{+}^{d},(1+\arrowvert x\arrowvert ) ^{-\beta}d\mu_{\alpha}(x))$, then $\lambda\geq\max\{\frac{\beta-1}{4},0\}$.
\end{theorem}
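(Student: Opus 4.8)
\emph{Strategy of the proof.} For the sufficiency we shall follow the scheme of Carbery--Rubio de Francia--Vega \cite{Carbery1}, in the form adapted to operators with discrete spectrum bounded away from zero that was used in \cite{CD,JLR}. The first step is to linearise $S_*^{\lambda}(L_\alpha)$ and remove the supremum over $R$ by a Sobolev-type embedding in the dilation parameter, using the subordination formula writing $S_R^{\lambda}(L_\alpha)$ as an average of the lower-order means $S_t^{\lambda'}(L_\alpha)$, $t\le R$, $0<\lambda'<\lambda$, together with the identity $R\partial_R S_R^{\lambda+1}(L_\alpha)=2(\lambda+1)\big(S_R^{\lambda}(L_\alpha)-S_R^{\lambda+1}(L_\alpha)\big)$. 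This dominates $S_*^{\lambda}(L_\alpha)f$ pointwise by a geometric mean of square functions $\big(\int_0^{\infty}|\psi(L_\alpha/R^2)f|^2\,dR/R\big)^{1/2}$ with $\psi$ a smooth bump. Decomposing the Bochner--Riesz profile near its singularity, $(1-s)_+^{\lambda}=\sum_{k\ge0}2^{-k\lambda}\chi_k(s)$, with $\chi_0$ a fixed Schwartz bump supported in $[0,1)$ and, for $k\ge1$, $\chi_k$ supported in the interval $\{\,1-2^{2-k}\le s\le 1-2^{-k}\,\}$ (of length $\sim2^{-k}$) with $\|\chi_k^{(N)}\|_\infty\le C_N 2^{kN}$, the $k=0$ piece gives a maximal operator dominated pointwise by the Hardy--Littlewood maximal function on $(\mathbb R_+^d,|\cdot|,\mu_\alpha)$ (via the Gaussian heat-kernel bounds for $L_\alpha$), which is bounded on $L^2((1+|x|)^{-\beta}d\mu_\alpha)$ because $(1+|x|)^{-\beta}$ is an $A_2$ weight for this doubling space whenever $0\le\beta<2(|\alpha|_1+d)$. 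Everything then reduces to one uniform estimate: for each $k\ge1$ and each $R$, a weighted $L^2$ bound for the spectrally localised operator $\chi_k(L_\alpha/R^2)$ (and its $R$-derivative), with the sharp power of $2^k$ so that the series $\sum_k 2^{-k\lambda}(\text{loss})$ converges exactly when $\lambda>\max\{(\beta-1)/4,0\}$.

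\emph{The core estimate.} The crux is this localised weighted estimate, a ``restriction/trace lemma'' for $L_\alpha$ at a spectral window of width $\sim2^{-k}R^2$ about $R^2$. Here we exploit the point stressed in the Introduction: the kernel $\sum_{|\mu|_1=n}\varphi_\mu^{\alpha}(x)\varphi_\mu^{\alpha}(y)$ of the Laguerre projection $P_n$ is ``radial'' --- it is a one-variable Laguerre kernel of type $|\alpha|_1+d-1$ in a suitable argument --- so at the level of the Bochner--Riesz kernels the operator $L_\alpha=\sum_j L_{\alpha_j}$ may be treated through the one-dimensional Laguerre--Hankel calculus of homogeneous dimension $D:=2(|\alpha|_1+d)$, the dimension already visible in $\lambda(\alpha,p)=\max\{D|1/2-1/p|-1/2,0\}$. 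Feeding in the classical regime-by-regime asymptotics of the Laguerre functions $\varphi_n^{\alpha}$ (oscillatory regime $n^{-1/2}\lesssim|x|\lesssim n^{1/2}$, Airy/turning-point regime $|x|\sim n^{1/2}$, exponentially small regime $|x|\gtrsim n^{1/2}$), or equivalently pointwise bounds for the Laguerre--Bochner--Riesz kernel from Mehler-type formulas as in \cite{CD}, one gets the localised kernel estimates and then the weighted $L^2$ bound by a Schur-type argument. The inhomogeneous weight $(1+|x|)^{-\beta}$ becomes effective precisely because $\mathrm{spec}(L_\alpha)\subset[2(|\alpha|_1+d),\infty)$ forces $R\gtrsim1$, so $(1+|x|)^{-\beta}$ coincides with $|x|^{-\beta}$ on the whole relevant range $1\lesssim|x|\lesssim R$; and the $t^{-1/2}$ decay of the Bessel-type oscillation produces the exponent $(\beta-1)/4$, with no loss (only $\lambda>0$) once $\beta\le1$.

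\emph{Necessity.} If $\lambda<0$ the means $S_R^{\lambda}(L_\alpha)$ are not bounded on any $L^2((1+|x|)^{-\beta}d\mu_\alpha)$ (cf.\ \cite{CD}), so $\lambda\ge0$ is forced. For $\lambda\ge(\beta-1)/4$ when $1<\beta<2(|\alpha|_1+d)$ we use $S_*^{\lambda}(L_\alpha)\ge\sup_R|S_R^{\lambda}(L_\alpha)|$ and show $\{S_R^{\lambda}(L_\alpha)\}_{R>0}$ is not uniformly bounded on $L^2((1+|x|)^{-\beta}d\mu_\alpha)$ once $0\le\lambda<(\beta-1)/4$. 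Testing on a normalised radial wave packet $f_R$ concentrated in the spectral window $e_n\sim R^2$ --- essentially a truncation of $\overline{S_R^{\lambda}(L_\alpha)(\cdot,0)}$ --- we use that for $1\lesssim|x|\lesssim R$ this kernel behaves (up to a harmless localisation) like the classical radial Bochner--Riesz kernel of dimension $D$, i.e.\ like $R^{(D-1)/2-\lambda}|x|^{-(D+1)/2-\lambda}$ times an oscillation. Since $(1+|x|)^{-\beta}$ is of size $R^{-\beta}$ on the bulk of the support of $f_R$ near $|x|\sim R$ while of size $1$ near the origin where $S_R^{\lambda}(L_\alpha)f_R$ concentrates, the ratio $\|S_R^{\lambda}(L_\alpha)f_R\|_{L^2((1+|x|)^{-\beta}d\mu_\alpha)}/\|f_R\|_{L^2((1+|x|)^{-\beta}d\mu_\alpha)}$ grows like a positive power of $R$ exactly when $\lambda<(\beta-1)/4$.

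\emph{The main difficulty.} The decisive step is the localised weighted restriction estimate for $L_\alpha$ with sharp dependence on $2^{-k}$. Two features make it delicate: (i) the turning-point (Airy) region $|x|\sim\sqrt{e_n}\sim R$, where the Laguerre functions are neither purely oscillatory nor exponentially small and where the kernel carries its heaviest tail against the weight; and (ii) the genuinely $d$-dimensional nature of $L_\alpha=\sum_j L_{\alpha_j}$: although its Bochner--Riesz kernel is ``radial,'' making the reduction to the dimension-$D$ Hankel calculus work uniformly --- and for all $\alpha\in(-1,\infty)^d$, not just half-integer vectors where one could invoke the twisted-Laplacian results of \cite{JLR} --- requires a truly multidimensional analysis rather than tensor products of the one-dimensional estimates.
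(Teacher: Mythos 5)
Your overall frame (the Carbery--Rubio de Francia--Vega weighted $L^2$ scheme, the $A_2$ treatment of the low piece via heat-kernel bounds, the role of the spectral gap in making the inhomogeneous weight effective) is the same as the paper's, but the two steps on which your argument actually rests do not hold as stated. First, the kernel of $P_n$ — and hence of $S_R^{\lambda}(L_\alpha)$ — is \emph{not} a radial function: it is $\sum_{\lvert\mu\rvert_1=n}\prod_j\varphi_{\mu_j}^{\alpha_j}(x_j)\varphi_{\mu_j}^{\alpha_j}(y_j)$, and already the heat kernel \eqref{HeatK} depends on the individual products $x_iy_i$, not on $\lvert x\rvert,\lvert y\rvert$ alone. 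What is true is only that $P_n$ preserves radial functions and acts on them as the one-dimensional type-$(\lvert\alpha\rvert_1+d-1)$ projection (Lemma \ref{L4.1}, via \eqref{Iden}); the paper uses this exclusively for the sharpness results, never for the maximal estimate. So your reduction of the core weighted estimate to a ``dimension-$D$ Laguerre--Hankel calculus'' plus a Schur argument has no justification for general (non-radial) $f$, and this is exactly where the paper must work: it proves a trace lemma $\lVert P_n\rVert_{L^2\to L^2((1+\lvert x\rvert)^{-\beta})}\lesssim n^{-1/4}$ for $\beta>1$ (Lemma \ref{Trace}) by a genuinely multidimensional orthogonal decomposition $f=\sum_i f_i$ with $\mu_i\gtrsim\lvert\mu\rvert_1/d$ and one-dimensional Laguerre asymptotics, and then upgrades it to spectrally localized multipliers through the $\lVert\cdot\rVert_{N^2,q}$ norms and bilinear interpolation (Lemma \ref{ETrace}).

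Second, your reduction of the sufficiency to ``one uniform estimate: for each $k$ and each $R$, a weighted $L^2$ bound for $\chi_k(L_\alpha/R^2)$ with the sharp power of $2^k$'' cannot be correct uniformly in $R$. The trace-lemma-type bound for a window of relative width $2^{-k}$ at frequency $N\sim R$ carries a loss growing like $N^{\beta-1}$ (with the weight factor included), which is admissible only for $R\lesssim 2^{k/2}$; indeed a single projection $P_n$ already has weighted operator norm growing like $n^{(\beta-1)/4}$ (this is the content of the necessity computation via Lemmas \ref{L4.8}--\ref{L4.9}), so no $R$-uniform bound with loss $2^{k(\beta-1)/4}$ can hold. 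The paper therefore splits the square function at $t\sim\delta^{-1/2}$ (Proposition \ref{SP} and the Appendix): the low-frequency part uses Lemma \ref{ETrace} together with the weighted Sobolev-type inequality of Lemma \ref{weighted}, while the high-frequency part requires the finite speed of propagation of $\cos(t\sqrt{L_\alpha})$ (Lemma \ref{Finit}) and a spatial localization to cubes on which the weight is essentially constant — an ingredient entirely absent from your outline, and without which the turning-point region you flag as the main difficulty is not handled. The necessity part is likewise done differently and more concretely in the paper: duality to $L^2((1+\lvert x\rvert)^{\beta}d\mu_\alpha)$, the subordination formula yielding $\lVert P_nf\rVert\lesssim n^{\lambda}\lVert f\rVert$ on the weighted space, and the explicit tensor test functions $g_n=\varphi_n^{\alpha_1}\otimes\varphi_0^{\alpha_2}\otimes\cdots\otimes\varphi_0^{\alpha_d}$ with two-sided weighted bounds for one-dimensional Laguerre functions; your radial wave-packet computation again presupposes the unproven identification of the Bochner--Riesz kernel with the classical radial kernel of dimension $D$, so as written both halves of your proposal have a genuine gap at the decisive step.
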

Now, we can deduce the sufficiency part of Theorem \ref{T1.1} from Theorem \ref{T1.2}. Indeed, Theorem \ref{T1.2} establishes a.e. convergence of $S_{R}^{\lambda}(L_{\alpha})f$ for all $f\in L^2(\mathbb{R}_{+}^{d},(1+\arrowvert x\arrowvert ) ^{-\beta}d\mu_{\alpha}(x))$ provided that $\lambda>\max\{\frac{\beta-1}{4},0\}$ via a standard argument. For given $p\geq 2$ and $\lambda>\lambda(\alpha,p)/2$, we can choose a $\beta$ such that $\beta>(2\arrowvert \alpha\arrowvert_1+2d)(1-2/p)$ and $\lambda>\max\{\frac{\beta-1}{4},0\}$. This choice of $\beta$ ensures that  $f\in L^2(\mathbb{R}_{+}^{d},(1+\arrowvert x\arrowvert) ^{-\beta}d\mu_{\alpha}(x))$ if $f\in L^2(\mathbb{R}_{+}^{d},d\mu_{\alpha}(x))$ as it follows by H\"{o}lder's inequality. Therefore, this yields a.e. convergence of $S_{R}^{\lambda}(L_{\alpha})f$ for $f\in L^2(\mathbb{R}_{+}^{d},d\mu_{\alpha}(x))$ if $\lambda>\lambda(\alpha,p)/2$. 
\par The use of weighted \( L^2 \) estimates in the study of pointwise convergence for Bochner–Riesz means dates back to Carbery et al. \cite{Carbery1}. It was subsequently found that this strategy is equally effective for similar problems in other contexts, as demonstrated in works such as \cite{Annoni, Lee2, JLR}. The approach adopted in this paper is not new and is primarily based on the argument presented in \cite{CD}
\subsection{Organization}
In Section \ref{Section2}, we establish several preparatory estimates that are crucial for the proof of Theorem \ref{T1.2}. Section \ref{Section3} is dedicated to proving the sufficiency part of Theorem \ref{T1.2}, which is reduced to the square function estimate, the proof of which is deferred to appendix \ref{appendix}. In Section \ref{Section4}, we demonstrate the sharpness of the summability indices, thereby completing the proofs of Theorem \ref{T1.1} and Theorem \ref{T1.2}.
\subsection{Notation}
 For $x\in \mathbb{R}_{+}^{d} $ and $r>0$, we denote by $B(x,r)=\{y\in\mathbb{R}_{+}^{d}: \arrowvert x-y\arrowvert<r\}$ the ball centered at x with radius r.  We emphasize that the notations $\arrowvert x\arrowvert_1$ and $\arrowvert x\arrowvert$ are distinct, where $\arrowvert x\arrowvert_1:=\sum_{i=1}^{d}x_i$ and $\arrowvert x\arrowvert:=\left(\sum_{i=1}^{d}x_i^2\right)^{1/2}$. For a subset $E\subset\mathbb{R}_{+}^{d}$, the notation $\arrowvert E\arrowvert$ denotes the Lebesgue measure of $E$. For any bounded function $\mathscr{M}$ the operator $\mathscr{M}(L_{\alpha})$ is defined by $\mathscr{M}(L_{\alpha}):=\sum_{n=0}^{\infty}\mathscr{M}(4n+2\arrowvert \alpha\arrowvert_1+2d)P_n$.  We note that the coupling constant $\alpha\in (-1,\infty)^{d}$ is considered fixed throughout, and will not be explicitly mentioned thereafter. In particular, the uniform constants in the subsequent inequalities may depend on \( \alpha \) and the dimension \( d \), unless otherwise stated. 
\section{Preparatory estimates}\label{Section2}
In this section, we will establish several preparatory estimates that are crucial for the proof of Theorem \ref{T1.2}. 
\subsection{kernel estimate}
In this subsection, we derive the expression for the heat-diffusion kernel of the operator $L_{\alpha}$ and show that it satisfies Gaussian upper bounds. These bounds, in turn, imply the finite speed propagation property of the wave operator $\cos t\sqrt{L_{\alpha}}$. The finite speed propagation property will be instrumental in proving the high-frequency part of the square function estimate (see Proposition \ref{SP} below). The d-dimensional heat-diffusion kernel of the operator $L_{\alpha}$ is defined by (see, for example, \cite{SVK})
\begin{equation*}
K_{L_{\alpha}}(t,x,y)=\sum_{n=0}^{\infty}e^{-t(4n+2\arrowvert\alpha\arrowvert_1+2d)}\sum_{\arrowvert\mu\arrowvert=n}\varphi_{\mu}^{\alpha}(x)\varphi_{\mu}^{\alpha}(y),
\end{equation*}
where $\varphi_{\mu}^{\alpha}$ are the eigenfunctions of the operator $L_{\alpha}$ corresponding to eigenvalue $e_n=4n+2\arrowvert\alpha\arrowvert_1+2d$.
\begin{lemma} The kernel $K_{L_{\alpha}}(t,x,y)$ has the following explicit form:
\begin{equation}\label{HeatK}
K_{L_{\alpha}}(t,x,y)=\left(\frac{2e^{-2t}}{1-e^{-4t}}\right)^d\exp\left(-\frac{1}{2}\frac{1+e^{-4t}}{1-e^{-4t}}(\arrowvert x\arrowvert^2+\arrowvert y\arrowvert^2)\right)\prod_{i=1}^{d}\frac{I_{\alpha_i}(\frac{2e^{-2t}x_iy_i}{1-e^{-4t}})}{(x_iy_i)^{\alpha_i}},
\end{equation}
where $I_{\alpha}(z)=i^{-\alpha}J_{\alpha}(z)$ is the modified Bessel function ($J_{\alpha}(z)$ being the standand Bessel function of the first kind).
\end{lemma}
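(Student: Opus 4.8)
The plan is to derive \eqref{HeatK} by a tensor‑product reduction to the one‑dimensional case. Since the $d$-dimensional Laguerre functions factor as $\varphi_{\mu}^{\alpha}(x)=\prod_{j=1}^d\varphi_{\mu_j}^{\alpha_j}(x_j)$ and the eigenvalue $4|\mu|_1+2|\alpha|_1+2d=\sum_{j=1}^d(4\mu_j+2\alpha_j+1)$ splits additively, the heat kernel factors:
\[
K_{L_{\alpha}}(t,x,y)=\prod_{j=1}^{d}K_{L_{\alpha_j}}(t,x_j,y_j),
\qquad
K_{L_{\alpha_j}}(t,u,v)=\sum_{n=0}^{\infty}e^{-t(4n+2\alpha_j+1)}\varphi_n^{\alpha_j}(u)\varphi_n^{\alpha_j}(v).
\]
So it suffices to prove the one‑dimensional identity
\[
\sum_{n=0}^{\infty}e^{-t(4n+2\alpha+1)}\varphi_n^{\alpha}(u)\varphi_n^{\alpha}(v)
=\frac{2e^{-2t}}{1-e^{-4t}}\exp\!\Big(-\tfrac12\tfrac{1+e^{-4t}}{1-e^{-4t}}(u^2+v^2)\Big)\frac{I_{\alpha}\!\big(\tfrac{2e^{-2t}uv}{1-e^{-4t}}\big)}{(uv)^{\alpha}},
\]
and then multiply over $j$, collecting the $d$ copies of $\big(2e^{-2t}/(1-e^{-4t})\big)$ into a $d$‑th power and the exponentials into $\exp\!\big(-\tfrac12\tfrac{1+e^{-4t}}{1-e^{-4t}}(|x|^2+|y|^2)\big)$.

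For the one‑dimensional identity I would invoke the classical Hille–Hardy formula for Laguerre polynomials,
\[
\sum_{n=0}^{\infty}\frac{n!}{\Gamma(n+\alpha+1)}L_n^{\alpha}(a)L_n^{\alpha}(b)\,w^{n}
=\frac{(ab\,w)^{-\alpha/2}}{1-w}\exp\!\Big(-w\frac{a+b}{1-w}\Big)I_{\alpha}\!\Big(\frac{2\sqrt{ab\,w}}{1-w}\Big),\qquad |w|<1,
\]
and substitute $a=u^2$, $b=v^2$, $w=e^{-4t}$, together with the definition $\varphi_n^{\alpha}(u)=\big(2\,n!/\Gamma(n+\alpha+1)\big)^{1/2}L_n^{\alpha}(u^2)e^{-u^2/2}$. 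The factor $\big(2n!/\Gamma(n+\alpha+1)\big)$ from the two Laguerre functions matches the weight in Hille–Hardy, the prefactor $e^{-t(2\alpha+1)}$ combines with $w^{n}=e^{-4tn}$ and the Hille–Hardy output to produce $2e^{-2t}/(1-e^{-4t})$ after simplifying $(u^2v^2e^{-4t})^{-\alpha/2}=e^{-2\alpha t}(uv)^{-\alpha}$, and the Gaussian factors $e^{-(u^2+v^2)/2}$ merge with $\exp\!\big(-e^{-4t}\tfrac{u^2+v^2}{1-e^{-4t}}\big)$ via the algebraic identity $\tfrac12+\tfrac{e^{-4t}}{1-e^{-4t}}=\tfrac12\tfrac{1+e^{-4t}}{1-e^{-4t}}$. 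The argument of the Bessel function becomes $\tfrac{2\sqrt{u^2v^2e^{-4t}}}{1-e^{-4t}}=\tfrac{2e^{-2t}uv}{1-e^{-4t}}$, as claimed.

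The only genuine subtlety is justifying the use of the Hille–Hardy formula and the termwise manipulations: for $t>0$ one has $w=e^{-4t}\in(0,1)$, so the series converges absolutely and uniformly on compact subsets of $\mathbb{R}_+^2$ (the Laguerre functions are uniformly bounded and the geometric factor $e^{-4tn}$ decays), which legitimizes the rearrangement; and the Hille–Hardy identity holds for $\alpha>-1$, matching our hypothesis $\alpha_j\in(-1,\infty)$. Thus the main obstacle is not analytic but bookkeeping — carefully tracking the normalization constants and the exponential prefactors through the substitution — and I would present the one‑dimensional computation in detail and then simply take the $d$‑fold product. (One may alternatively derive Hille–Hardy itself from the generating function $\sum_n L_n^{\alpha}(x)t^n=(1-t)^{-\alpha-1}e^{-xt/(1-t)}$ by a bilinear generating‑function computation, but citing it is cleaner.)
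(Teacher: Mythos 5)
Your proposal takes essentially the same route as the paper: the paper also factors the heat kernel over the coordinates via the tensor-product structure of the eigenfunctions and then sums each one-dimensional series with Mehler's formula for Laguerre polynomials (the Hille--Hardy identity you quote), so the argument is correct and essentially identical. Only fix two bookkeeping slips: the one-dimensional eigenvalue is $4n+2\alpha_j+2$ (not $4n+2\alpha_j+1$), and $(u^2v^2e^{-4t})^{-\alpha/2}=e^{+2\alpha t}(uv)^{-\alpha}$, so that $2e^{-t(2\alpha+2)}e^{2\alpha t}=2e^{-2t}$ produces the stated prefactor.
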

\begin{proof}
We repeat the proof of \cite[Lemma 3.2]{SVK} here, as the setting of our Laguerre operators $L_{\alpha}$ is slightly different, though not in essence. We begin by expanding the kernel $K_{L_{\alpha}}(t,x,y)$ using its definition:
\begin{equation*}
\begin{aligned}
K_{L_{\alpha}}(t,x,y)&=\sum_{n=0}^{\infty}e^{-t(4n+2\arrowvert\alpha\arrowvert_1+2d)}\sum_{\arrowvert\mu\arrowvert=n}\varphi_{\mu}^{\alpha}(x)\varphi_{\mu}^{\alpha}(y)\\
&=\sum_{n=0}^{\infty}e^{-t(4n+2\arrowvert\alpha\arrowvert_1+2d)}\sum_{\arrowvert\mu\arrowvert=n}\prod_{i=1}^{d}\varphi_{\mu_i}^{\alpha_i}(x_i)\varphi_{\mu_i}^{\alpha_i}(y_i)\\
&=e^{-t(2\arrowvert\alpha\arrowvert_1+2d)}\prod_{i=1}^{d}\sum_{\mu_i=0}^{\infty}e^{-4t\mu_i}\varphi_{\mu_i}^{\alpha_i}(x_i)\varphi_{\mu_i}^{\alpha_i}(y_i)\\
&=\left(\frac{2e^{-2t}}{1-e^{-4t}}\right)^d\exp\left(-\frac{1}{2}\frac{1+e^{-4t}}{1-e^{-4t}}(\arrowvert x\arrowvert^2+\arrowvert y\arrowvert^2)\right)\prod_{i=1}^{d}\frac{I_{\alpha_i}(\frac{2e^{-2t}x_iy_i}{1-e^{-4t}})}{(x_iy_i)^{\alpha_i}}.
\end{aligned}
\end{equation*}
In the third equality, we use the fact that both functions are in $L^2(\mathbb{R}_{+}^{2d},d\mu_{\alpha}(x,y))$ and share the same Fourier–Laguerre coefficients. The final equality follows from the following Mehler's formula for Laguerre polynomials (see, for example,  \cite[1.1.47]{TS3}):
\begin{equation*}
\sum_{k=0}^{\infty}\frac{\Gamma(k+1)}{\Gamma(k+\alpha+1)}L_{k}^{\alpha}(x)L_{k}^{\alpha}(y)z^k=(1-z)^{-1}(xyz)^{-\frac{\alpha}{2}}e^{-\frac{z}{1-z}(x+y)}I_{\alpha}\left(\frac{2\sqrt{xyz}}{1-z}\right), \quad \forall\arrowvert z\arrowvert<1.
\end{equation*}
\end{proof}
\begin{lemma}
There exist $C,c>0$ such that
\begin{equation}\label{kernel}
K_{L_{\alpha}}(t,x,y)\leq\frac{C}{e^{dt}\mu_{\alpha}(Q(x,\sqrt{t})}\exp(-c\frac{\arrowvert x-y\arrowvert^2}{t})
\end{equation}
for all $x,y\in \mathbb{R}_{+}^d$ and $t>0$, where $Q(x,\sqrt{t})$ is defined by tensor product of the one-dimension balls, that is,  $Q(x,\sqrt{t}):=B(x_1,\sqrt{t})\times\cdot\cdot\cdot \times B(x_n,\sqrt{t}).$
\end{lemma}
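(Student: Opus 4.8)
The plan is to derive the Gaussian bound \eqref{kernel} directly from the explicit formula \eqref{HeatK}, treating the $d$-dimensional kernel as a tensor product of one-dimensional kernels and reducing everything to a single-variable estimate. Concretely, writing $K_{L_\alpha}(t,x,y)=\prod_{i=1}^d k_{\alpha_i}(t,x_i,y_i)$ where
\begin{equation*}
k_{\alpha}(t,u,v)=\frac{2e^{-2t}}{1-e^{-4t}}\exp\left(-\frac12\frac{1+e^{-4t}}{1-e^{-4t}}(u^2+v^2)\right)\frac{I_{\alpha}\!\left(\tfrac{2e^{-2t}uv}{1-e^{-4t}}\right)}{(uv)^{\alpha}},
\end{equation*}
it suffices to prove the one-dimensional bound $k_\alpha(t,u,v)\le C e^{-t}\,\mu_{\alpha}(B(u,\sqrt t))^{-1}\exp(-c|u-v|^2/t)$, since the product of these over $i=1,\dots,d$ gives exactly the right-hand side of \eqref{kernel} (the factor $e^{-dt}$ and the product measure $\mu_\alpha(Q(x,\sqrt t))=\prod_i \mu_{\alpha_i}(B(x_i,\sqrt t))$ both factor through the tensor structure). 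First I would record the standard asymptotics of the modified Bessel function: $I_\alpha(z)\sim c_\alpha z^\alpha$ as $z\to 0^+$ and $I_\alpha(z)\sim (2\pi z)^{-1/2}e^z$ as $z\to\infty$, so that uniformly $I_\alpha(z)\le C(z^\alpha + z^{-1/2})e^z$ for all $z>0$, hence $I_\alpha(z)/z^\alpha \le C(1+z)^{-\alpha-1/2}e^z$.

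Next I would split the analysis according to the size of $t$. Put $\delta=1-e^{-4t}$, so $\delta\asymp \min\{t,1\}$ and the Bessel argument is $z=2e^{-2t}uv/\delta$. The key algebraic identity is that the exponent
\begin{equation*}
-\frac12\frac{1+e^{-4t}}{\delta}(u^2+v^2)+\frac{2e^{-2t}}{\delta}uv = -\frac{1}{2\delta}\big((u-v)^2 + (1-e^{-2t})^2(u^2+v^2)/(\text{stuff})\big)
\end{equation*}
more precisely one checks $-\tfrac12\tfrac{1+e^{-4t}}{\delta}(u^2+v^2)+ \tfrac{2e^{-2t}}{\delta}uv = -\tfrac{e^{-2t}}{\delta}(u-v)^2 - \tfrac{(1-e^{-2t})^2}{2\delta}(u^2+v^2)$. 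Thus after absorbing the $e^z$ factor from the Bessel bound, $k_\alpha(t,u,v)\le C\,\delta^{-1}(1+z)^{-\alpha-1/2}\exp\!\big(-\tfrac{e^{-2t}}{\delta}(u-v)^2\big)\exp\!\big(-\tfrac{(1-e^{-2t})^2}{2\delta}(u^2+v^2)\big)$. For $t\le 1$ one has $e^{-2t}/\delta \asymp 1/t$, which produces the required Gaussian $\exp(-c|u-v|^2/t)$; the remaining task is to show $\delta^{-1}(1+z)^{-\alpha-1/2}\lesssim \mu_\alpha(B(u,\sqrt t))^{-1}$ (up to the harmless extra exponential in $u^2+v^2$ when $t\le 1$, or absorbing constants when $t\gtrsim 1$). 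Here one uses that $\mu_\alpha(B(u,\sqrt t))\asymp \sqrt t\,(u+\sqrt t)^{2\alpha+1}$ when $\alpha\ge -1/2$, with an analogous two-sided bound for $-1<\alpha<-1/2$, together with $z=2e^{-2t}uv/\delta \asymp uv/t$ for $t\le 1$, distinguishing the regimes $u\lesssim\sqrt t$ (where $1+z\asymp 1$ and $\delta^{-1}\asymp t^{-1}\asymp t^{-1/2}\cdot t^{-1/2}$, and one uses the companion estimate on the $v$-side Gaussian decay) from $u\gtrsim\sqrt t$. For $t\ge 1$, $\delta\asymp 1$ and $e^{-2t}\asymp e^{-2t}$; here one simply notes $K$ decays exponentially in $t$ and in $|u-v|^2$ while $\mu_\alpha(B(u,\sqrt t))\asymp \mu_\alpha(B(u,1))$ up to polynomial factors in $\sqrt t$ that are dominated by the gain $e^{-t}$, so the bound is easy.

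The one subtlety I would flag as the main obstacle is the matching of the measure factor $\mu_\alpha(B(u,\sqrt t))^{-1}$ with $\delta^{-1}(1+z)^{-\alpha-1/2}$ uniformly across the regimes $u\le \sqrt t$, $u\asymp \sqrt t$, $u\ge \sqrt t$ and, crucially, the transfer between the $u$-variable and $v$-variable: when $u\ll \sqrt t \ll v$ (or vice versa) the naive bound $(1+z)^{-\alpha-1/2}$ does not directly give $(u+\sqrt t)^{-2\alpha-1}$, and one must exploit the Gaussian factor $\exp(-c|u-v|^2/t)$ to pass from a bound natural in one variable to a bound in the other — i.e. to show $\mu_\alpha(B(v,\sqrt t))/\mu_\alpha(B(u,\sqrt t)) \lesssim e^{c|u-v|^2/t}$, which is the standard doubling/comparison lemma for the measure $x^{2\alpha+1}dx$. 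This is a routine but slightly delicate case analysis; I would organize it as a self-contained one-dimensional lemma and then tensorize. Since the referenced proof in \cite[Lemma 3.2]{SVK} is for the twisted-Laplacian normalization, I would mostly cite that argument and only indicate the minor changes needed for the present normalization of eigenvalues $4n+2\alpha+2$ (which only affects the overall $e^{-dt}$ factor and the constants).
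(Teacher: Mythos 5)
Your reduction of \eqref{kernel} to a one-dimensional estimate by tensorizing \eqref{HeatK} is exactly the paper's first (and essentially only) step; the difference is that the paper then simply quotes \cite[Lemma 3.1]{BD} for the one-dimensional bound \eqref{K1}, whereas you try to prove it directly from the Mehler formula and Bessel asymptotics (note also that \cite[Lemma 3.2]{SVK}, which you say you would cite, is the source of the explicit formula \eqref{HeatK}, not of any Gaussian bound). Your small-time skeleton is broadly right and the exponent identity you state is correct, but there is a bookkeeping slip: since $z=2e^{-2t}uv/\delta$, one has $I_{\alpha}(z)/(uv)^{\alpha}=(2e^{-2t}/\delta)^{\alpha}\,I_{\alpha}(z)/z^{\alpha}$, so the correct intermediate bound carries the prefactor $\delta^{-1-\alpha}$ (that is, $t^{-(1+\alpha)}$ for $t\le 1$), not $\delta^{-1}$. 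As written, your displayed inequality discards the factor $(2e^{-2t}/\delta)^{\alpha}$, which is not legitimate when $\alpha>0$ (the factor is large), and for $\alpha<0$ the matching you propose, $\delta^{-1}(1+z)^{-\alpha-1/2}\lesssim\mu_{\alpha}(B(u,\sqrt t))^{-1}$, fails in the regime $u\asymp v\gg\sqrt t$, where the left-hand side is $\asymp t^{\alpha-1/2}u^{-2\alpha-1}$ while the right-hand side is $\asymp t^{-1/2}u^{-2\alpha-1}$. Keeping the factor, the matching does work ($\delta^{-1-\alpha}(1+z)^{-\alpha-1/2}\asymp t^{-1/2}(u+\sqrt t)^{-2\alpha-1}$ when $u\asymp v$, with the off-diagonal transfer handled by the Gaussian factor and doubling, as you indicate), so this part is repairable.

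The genuine gap is the case $t\ge 1$, which you dismiss as easy. The one-dimensional kernel decays in time only at the rate of the lowest eigenvalue: as $t\to\infty$ the $n=0$ term dominates and $k_{\alpha}(t,u,v)\sim c_{\alpha}e^{-(2\alpha+2)t}e^{-(u^2+v^2)/2}$ with $c_{\alpha}>0$, whereas the bound you must produce at $u=v$ fixed is $\asymp e^{-t}\mu_{\alpha}(B(u,\sqrt t))^{-1}\asymp e^{-t}t^{-(\alpha+1)}$. Hence the assertion that the polynomial factors in $\sqrt t$ are ``dominated by the gain $e^{-t}$'' is false whenever $2\alpha+2\le 1$, i.e. $\alpha\in(-1,-1/2]$, a range allowed here; even at $\alpha=-1/2$ the comparison is $e^{-t}$ against $e^{-t}t^{-1/2}$ and the easy argument collapses. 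So the large-time regime cannot be waved away: it is precisely where the prefactor $e^{-t}$ in \eqref{K1} is delicate for small $\alpha$, and it is the step the paper itself does not argue but delegates entirely to the cited lemma. To close your argument you would need either to restrict to $\min_i\alpha_i>-1/2$, to replace the time factor by the natural decay $e^{-(2\alpha+2)t}$ coming from the bottom of the spectrum (which is all that is needed, since the finite-speed-of-propagation application only uses the Gaussian bound $\mu_{\alpha}(Q(x,\sqrt t))^{-1}\exp(-c|x-y|^2/t)$ and cares nothing about the extra exponential gain), or to reproduce the exact statement and normalization of \cite[Lemma 3.1]{BD}; none of these is addressed in the proposal.
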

\begin{proof}
By \eqref{HeatK}, we have 
\begin{equation*}
K_{L_{\alpha}}(t,x,y)=\prod_{i=1}^{d}\frac{2e^{-2t}}{1-e^{-4t}}\exp\left(-\frac{1}{2}\frac{1+e^{-4t}}{1-e^{-4t}}( x_i^2+ y_j^2)\right)\frac{I_{\alpha_i}(\frac{2e^{-2t}x_iy_i}{1-e^{-4t}})}{(x_iy_i)^{\alpha_i}}.
\end{equation*}
To prove the inequality \eqref{kernel}, it suffices to establish the following estimates for each component i:
\begin{equation}\label{K1}
\frac{2e^{-2t}}{1-e^{-4t}}\exp\left(-\frac{1}{2}\frac{1+e^{-4t}}{1-e^{-4t}}( x_i^2+ y_j^2)\right)\frac{I_{\alpha_i}(\frac{2e^{-2t}x_iy_i}{1-e^{-4t}})}{(x_iy_i)^{\alpha_i}}\leq \frac{C_i}{e^t\mu_{\alpha_i}(B(x_i,\sqrt{t}))}\exp(-c_i\frac{\arrowvert x_i-y_i\arrowvert^2}{t})
\end{equation}
for some constants $C_i,c_i>0$. Here $\mu_{\alpha_i}(B(x_i,\sqrt{t}))$
is defined as $$\mu_{\alpha_i}(B(x_i,\sqrt{t}):=\int_{B(x_i,\sqrt{t})}x^{2\alpha_i+1}dx.$$
By Lemma 3.1 in \cite{BD}, the inequality \eqref{K1} holds true  with some constant $C_i,c_i>0$. Thus, the proof of the inequality \eqref{kernel} is complete.
\end{proof}
The Gaussian upper bounds imply the following finite-speed propagation property of the wave operator $\cos t\sqrt{L_{\alpha}}$.
\begin{lemma}\label{Finit}
There exists a constant $c_0>0$ such that
\begin{equation}\label{Finit.1}
supp\,K_{\cos(t\sqrt{L_{\alpha}})}(x,y)\subset D(t):=\{(x,y)\in \mathbb{R}_{+}^{d}\times\mathbb{R}_{+}^{d}:\arrowvert x-y\arrowvert\leq c_0t\},\quad \forall t>0,
\end{equation}
where $K_{\cos(t\sqrt{L_{\alpha}})}(x,y)$ is the kernel associated to $\cos(t\sqrt{L_{\alpha}})$.
\end{lemma}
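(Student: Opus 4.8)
The plan is to deduce the finite-speed propagation property from the Gaussian heat kernel bounds \eqref{kernel} via the standard Davies--Gaffney estimate, following the well-known recipe of Coulhon--Sikora (see also the treatment in \cite{CS, CD}). The key observation is that the operator $L_{\alpha}$ is a non-negative self-adjoint operator on $L^2(\mathbb{R}_+^d, d\mu_\alpha)$ whose heat semigroup $e^{-tL_\alpha}$ has a kernel satisfying Gaussian upper bounds with respect to the metric measure space $(\mathbb{R}_+^d, |\cdot|, d\mu_\alpha)$, and moreover this measure is doubling (each factor $x_i^{2\alpha_i+1}dx_i$ is locally doubling on $(0,\infty)$, since $\alpha_i > -1$; the tensor product $\mu_\alpha$ is then doubling as well). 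Under these hypotheses the abstract equivalence between Gaussian bounds and finite-speed propagation is available.

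First I would record the Davies--Gaffney estimate: for any two open sets $U_1, U_2 \subset \mathbb{R}_+^d$ and any $f_i \in L^2$ supported in $U_i$,
\begin{equation*}
\left| \langle e^{-tL_\alpha} f_1, f_2 \rangle_\alpha \right| \le C \exp\left( -\frac{\mathrm{dist}(U_1,U_2)^2}{4t} \right) \|f_1\|_2 \|f_2\|_2,
\end{equation*}
which follows by integrating the pointwise bound \eqref{kernel} and using Cauchy--Schwarz together with the doubling property to absorb the volume factors (the parabolic exponential factor $e^{-dt}$ only helps). Next, I would invoke the theorem of Coulhon--Sikora / Sikora stating that for a non-negative self-adjoint operator, the Davies--Gaffney estimate is \emph{equivalent} to the finite-speed propagation property: there is $c_0 > 0$ such that $\mathrm{supp}\, K_{\cos(t\sqrt{L_\alpha})} \subset \{(x,y): |x-y| \le c_0 t\}$ for all $t > 0$. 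Concretely one writes $\cos(t\sqrt{L_\alpha})$ via the subordination/Fourier inversion formula in terms of $e^{-sL_\alpha}$ (analytically continued), and the exponential off-diagonal decay of the heat kernel translates, under the Paley--Wiener principle, into the sharp support condition for the wave propagator; alternatively one uses the functional-calculus characterization that $\langle F(\sqrt{L_\alpha}) f_1, f_2\rangle_\alpha = 0$ whenever $\hat F$ is supported in $[-\mathrm{dist}(U_1,U_2)/c_0, \mathrm{dist}(U_1,U_2)/c_0]$.

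The only genuine point requiring care is the passage from the \emph{one-dimensional} ball-volume normalization $\mu_{\alpha_i}(B(x_i,\sqrt t))$ appearing in \eqref{K1}--\eqref{kernel}, written via the tensor-product boxes $Q(x,\sqrt t)$, to a statement phrased in terms of Euclidean balls $B(x,\sqrt t)$ in $\mathbb{R}_+^d$, which is what the metric-measure-space machinery expects. This is harmless: $B(x,\sqrt t/\sqrt d) \subset Q(x,\sqrt t) \subset B(x,\sqrt t)$, and doubling lets one interchange $\mu_\alpha(Q(x,\sqrt t))$ with $\mu_\alpha(B(x,\sqrt t))$ up to uniform constants, so \eqref{kernel} is equivalent to the genuine Gaussian bound in the metric $|\cdot|$. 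Thus I expect the proof to be short: state Davies--Gaffney as a consequence of \eqref{kernel}, then cite the Coulhon--Sikora equivalence to obtain \eqref{Finit.1}. The main obstacle, such as it is, is purely bookkeeping — verifying the doubling property of $\mu_\alpha$ uniformly and tracking the constant $c_0$ through the equivalence — rather than anything conceptually deep.
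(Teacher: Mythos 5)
Your proposal is correct and follows exactly the route the paper intends: the paper gives no proof of Lemma \ref{Finit} itself, merely noting that the Gaussian bound \eqref{kernel} implies finite-speed propagation and citing \cite{HLMML,SA}, which is precisely the Davies--Gaffney/Coulhon--Sikora equivalence you invoke. Your additional bookkeeping (passing from the tensor-product boxes $Q(x,\sqrt{t})$ to Euclidean balls via doubling, and noting that only some $c_0>0$ is claimed) is a sound filling-in of the details the paper leaves to the references.
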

For the proof of Lemma \ref{Finit}, we refer to \cite{HLMML,SA} for example.
\subsection{Muckenhoupt weights and Littlewwod-Paley inequality} It is straightforward to prove that there exists a constant $C>0$ so that 
\begin{equation*}
\mu_{\alpha}(B(x,\lambda r))\leq C \lambda^{2\arrowvert \alpha\arrowvert_1 +2d}\mu_{\alpha}(B(x, r))
\end{equation*}
for all $x\in \mathbb{R}_{+}^{d} $, $r>0$ and $\lambda>1$ (see Lemma \ref{M1} below for a detailed discussion). Thus $(\mathbb{R}_{+}^{d},d\mu_{\alpha}(x), \arrowvert \cdot\arrowvert)$ forms a space of homogeneous type, and we can define the class of Muckenhoupt weights as in \cite{BD}. In what follows, by a weight $\omega$, we shall mean that $\omega$ is a non-negative measurable and locally integrable function on $(\mathbb{R}_{+}^{d},d\mu_{\alpha}(x), \arrowvert \cdot\arrowvert)$. We say that a weight $\omega\in A_p(\mu_{\alpha})$, $1<p<\infty$, if there exists a constant $C$ such that for all balls $B\subset\mathbb{R}_{+}^{d}$,
\begin{equation*}\label{Mweights}
\left(\frac{1}{\mu_{\alpha}(B)}\int_{B}\omega(x)d\mu_{\alpha}(x)\right)\left(\frac{1}{\mu_{\alpha}(B)}\int_{B}\omega^{-1/(p-1)}(x)d\mu_{\alpha}(x)\right)^{p-1}\leq C.
\end{equation*}
The uncentered Hardy–Littlewood maximal operator on  $\mathbb{R}_{+}^{d}$ over balls is defined by
\begin{equation}\label{HLM}
\mathscr{M}(f)(x)=\sup_{x\in B}\frac{1}{\mu_{\alpha}(B)}\int_{B}\arrowvert f(y)\arrowvert d\mu_{\alpha}(y),\quad x\in \mathbb{R}_{+}^{d},
\end{equation}
where the supremum is taken over all balls $B\subset\mathbb{R}_{+}^{d}$ containing $x$. The theory of Hardy–Littlewood maximal operator and $A_p$ weights tells us that $\mathscr{M}$ is bounded on $L^p(\mathbb{R}_{+}^{d},d\mu_{\alpha}(x))$ for $1<p<\infty$ (see, for example, \cite[Proposition 2.14]{MS} ). Furthermore, if $\omega\in A_p(\mu_{\alpha})$, then $\mathscr{M}$ is also bounded on $L^p(\mathbb{R}_{+}^{d},\omega d\mu_{\alpha}(x))$. Next, we investigate for which real number $\beta$ the function $(1+\arrowvert x\arrowvert)^{\beta}$ is an $A_p(\mu_{\alpha})$ weight on $(\mathbb{R}_{+}^{d},d\mu_{\alpha}(x), \arrowvert \cdot\arrowvert)$. To proceed, we will use the following results, whose proofs are based on the strategies from \cite[Example 7.1.6, 7.1.7]{GL}.
\begin{lemma}\label{M1}
For every $\alpha=(\alpha_1,\cdot\cdot\cdot, \alpha_d)\in(-1,\infty)^d$, if $\beta>-2(\arrowvert \alpha\arrowvert_1+d)$, then the measure $\arrowvert x\arrowvert^{\beta}d\mu_{\alpha}(x)=\arrowvert x\arrowvert^{\beta}x_1^{2\alpha_1+1}\cdot\cdot\cdot x_d^{2\alpha_d+1}dx_1\cdot\cdot\cdot dx_d$ is a doubling measure on $\mathbb{R}_{+}^{d}$. Specifically, there exists a constant $C$ such that 
\begin{equation}\label{Doubling}
\int_{B(x_0,2R)}\arrowvert x\arrowvert^{\beta}d\mu_{\alpha}(x)\leq C\int_{B(x_0,R)}\arrowvert x\arrowvert^{\beta}d\mu_{\alpha}(x)
\end{equation}
for all balls $B\subset\mathbb{R}_{+}^{d}$.
\end{lemma}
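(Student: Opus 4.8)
The plan is to prove Lemma~\ref{M1} by reducing the $d$-dimensional doubling estimate to a product of one-dimensional statements, exactly along the lines of \cite[Example 7.1.6, 7.1.7]{GL}. First I would observe that on $\mathbb{R}_+^d$ the measure $d\mu_\alpha$ is a tensor product, and the weight $|x|^\beta$ is comparable to $\prod_{i=1}^d(\text{something local})$ only after a case split: since $|x|$ is comparable to $\max_i x_i$, and since we only need the doubling inequality \eqref{Doubling} up to constants, it suffices to handle the weight $w(x)=\bigl(\sum_i x_i^2\bigr)^{\beta/2}$ directly on balls. I would split into two regimes depending on the location of the ball $B=B(x_0,R)$ relative to the origin: the \emph{far} case $|x_0|\ge 4R$ (equivalently $R$ small compared to $|x_0|$) and the \emph{near} case $|x_0|<4R$.

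In the far case, on $B(x_0,2R)$ we have $|x|\sim|x_0|$ with comparability constants independent of $x_0,R$, so $|x|^\beta$ is essentially constant on the doubled ball and \eqref{Doubling} reduces to the doubling property of $d\mu_\alpha$ alone, namely $\mu_\alpha(B(x_0,2R))\le C\mu_\alpha(B(x_0,R))$. The latter follows because each one-dimensional measure $x_i^{2\alpha_i+1}\,dx_i$ is doubling on $(0,\infty)$ for $\alpha_i>-1$ (this is the classical power-weight computation: $\int_{B(a,2r)\cap(0,\infty)}x^{2\alpha_i+1}dx \le C\int_{B(a,r)\cap(0,\infty)}x^{2\alpha_i+1}dx$, with $C$ uniform in $a>0,r>0$), and a tensor product of doubling measures is doubling. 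In the near case $|x_0|<4R$, both balls are contained in $B(0,CR)$ for a fixed $C$, and one can bound $\int_{B(x_0,2R)}|x|^\beta d\mu_\alpha(x)\le\int_{B(0,CR)}|x|^\beta d\mu_\alpha(x)$ from above and, from below, $\int_{B(x_0,R)}|x|^\beta d\mu_\alpha(x)\ge\int_{B(0,cR)\cap\{|x-x_0|<R\}}|x|^\beta d\mu_\alpha(x)$; after rescaling $x=Ry$ everything becomes $R$-independent, so it suffices to show $\int_{B(0,C)\cap\mathbb{R}_+^d}|y|^\beta d\mu_\alpha(y)<\infty$ and that $\int_{B'}|y|^\beta d\mu_\alpha(y)>0$ for the rescaled inner region $B'$. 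The finiteness near the origin is exactly where the hypothesis $\beta>-2(|\alpha|_1+d)$ enters: in polar-type coordinates the integrand behaves like $r^{\beta}\cdot r^{2|\alpha|_1+d-1}\,dr$ near $r=0$, which is integrable precisely when $\beta+2|\alpha|_1+d-1>-1$, i.e. $\beta>-2(|\alpha|_1+d)$.

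The main obstacle, and the only genuinely nontrivial point, is the near case: one must handle balls straddling the origin where $|x|^\beta$ is singular (for $\beta<0$) or degenerate, and show the rescaled integral over the inner ball is bounded below uniformly over all admissible configurations of $x_0$ with $|x_0|<4R$. This requires a short compactness-or-explicit-estimate argument: after rescaling to $R=1$, the inner ball $B(x_0/R,1)\cap\mathbb{R}_+^d$ always contains a fixed nondegenerate cube sitting at distance $\gtrsim 1$ from the origin and from the coordinate hyperplanes (for instance a small cube around the point $(2,2,\dots,2)$ lies in $B(x_0/R,2)$ once we note $|x_0/R|<4$, so it lies within the doubled inner ball after a harmless adjustment of constants; alternatively one argues that $B(x_0/R,1/2)$ always contains a cube of fixed size on which $|y|\sim 1$), on which $|y|^\beta d\mu_\alpha(y)$ contributes a fixed positive amount. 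Combining the two cases yields \eqref{Doubling} with a constant depending only on $\alpha,d,\beta$, completing the proof.
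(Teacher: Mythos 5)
Your proposal follows essentially the same route as the paper: the same far/near dichotomy in $|x_0|$ versus $R$, with the far case reduced to the doubling of $d\mu_\alpha$ itself (which the paper, like you, obtains from the one-dimensional doubling of $x_i^{2\alpha_i+1}\,dx_i$ together with an inscribed/circumscribed cube comparison), and the near case handled by showing both integrals are comparable to $R^{\beta+2|\alpha|_1+2d}$, the hypothesis $\beta>-2(|\alpha|_1+d)$ entering exactly as you say through integrability of the weight at the origin. One caveat: of your two suggestions for the near-case lower bound only the second is viable --- a cube around $(2,\dots,2)$ need not lie in $B(x_0/R,2)$ once $d\ge 2$, and in any case a lower bound over the \emph{doubled} ball is the wrong direction --- but the alternative (a fixed cube inside $B(x_0/R,1/2)$ at fixed distance from the coordinate hyperplanes, on which $|y|\sim 1$) works uniformly for both signs of $\beta$, which is if anything a bit cleaner than the paper's monotonicity argument split according to the sign of $\beta$.
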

\begin{proof}
 We divide all balls $B(x_0, R)$ in $\mathbb{R}_{+}^{d}$ into two categories: balls of type I that satisfy $\arrowvert x_0\arrowvert\geq 3R$ and type II that $\arrowvert x_0\arrowvert< 3R$. If $B(x_0, R)$ is of type $I$, then for $x\in B(x_0, R)$ we must have 
\begin{equation*}
\arrowvert x_0\arrowvert-R\leq \arrowvert x\arrowvert\leq \arrowvert x_0\arrowvert+R.
\end{equation*}
Thus for balls of type I, we observe that
\begin{equation}\label{M1.1}
\int_{B(x_0,2R)}\arrowvert x\arrowvert^{\beta}d\mu_{\alpha}(x)\leq \mu_{\alpha}(B(x_0,2R))\left\{\begin{array}{lr}(\arrowvert x_0\arrowvert +2R)^{\beta}\quad if \quad\beta\geq 0,\\
(\arrowvert x_0\arrowvert -2R)^{\beta}\quad if \quad\beta< 0,
\end{array}
\right.
\end{equation}
\begin{equation}\label{M1.2}
\int_{B(x_0,R)}\arrowvert x\arrowvert^{\beta}d\mu_{\alpha}(x)\geq \mu_{\alpha}(B(x_0,R))\left\{\begin{array}{lr}(\arrowvert x_0\arrowvert -R)^{\beta}\quad if \quad\beta\geq 0,\\
(\arrowvert x_0\arrowvert +R)^{\beta}\quad if \quad\beta< 0.
\end{array}
\right.
\end{equation}
Next, we claim that there exists a constant $C$, depending only on $\alpha$ and $d$ such that
\begin{equation}\label{MClaim}
\mu_{\alpha}(B(x,\lambda R))\leq C\lambda^{2\arrowvert \alpha\arrowvert_1+2d}\mu_{\alpha}(B(x,R))
\end{equation}
for all $x\in \mathbb{R}_{+}^{d} $ and $R>0$ and $\lambda>1$. This claim will allow us to conclude the proof of \eqref{Doubling} for balls of type I. Specifically, for $\arrowvert x_0\arrowvert\geq 3R$, we have $\arrowvert x_0\arrowvert+2R\leq 4(\arrowvert x_0\arrowvert-R)$ and $\arrowvert x_0\arrowvert-2R\geq \frac{1}{4}(\arrowvert x_0\arrowvert+R)$. Using these relations, we can apply the estimates from \eqref{M1.1}, \eqref{M1.2}, and \eqref{MClaim} to show that there exists a constant $C$ such that the doubling condition \eqref{Doubling} holds for balls of type I.
\par To prove \eqref{MClaim}, we consider the circumscribed cuboid \( Q_{out}(\lambda R)\) of ball $B(x_0,\lambda R)$ and the inscribed cuboid \( Q_{in}(R) \) of ball $B(x_0,R)$. These cuboids are defined as follows:
\begin{equation*}
\begin{aligned}
& Q_{out}(\lambda R)=[x_{01}-\lambda R,x_1+\lambda R]\times\cdot\cdot\cdot[x_{0d}-\lambda R,x_{0d}+\lambda R]\cap \mathbb{R}_{+}^{d} ,\\
& Q_{in}(R)=[x_{01}-R/\sqrt{d},x_1+R/\sqrt{d}]\times\cdot\cdot\cdot[x_{0d}-R/\sqrt{d},x_{0d}+R/\sqrt{d}]\cap \mathbb{R}_{+}^{d}.
\end{aligned}
\end{equation*}
 Since the measure space $(\mathbb{R}_{+}, x^{\alpha}dx, \arrowvert \cdot\arrowvert)$ is a space of homogeneous type for every $\alpha>-1$ (see, \cite[(1)]{BD}), there exists a constant $C>0 $, depending only on $\alpha$, such that
\begin{equation*}
\int_{B(x,\lambda R)} t^{\alpha}dt\leq C \lambda^{\alpha+1}\int_{B(x,R)} t^{\alpha}dt
\end{equation*}
for all $x\in \mathbb{R}_{+}$, $R>0$ and $\lambda>1$. By applying Fubini's theorem, we get
\begin{equation*}\label{M1.3}
\begin{aligned}
\mu_{\alpha}(Q_{out}(\lambda R))&=\int_{B(x_{01},\lambda R)}x_1^{2\alpha_1+1}dx_1\cdot\cdot\cdot \int_{B(x_{0d},\lambda R)}x_d^{2\alpha_d+1}dx_d\\
&\leq C d^{\arrowvert\alpha\arrowvert_1+d}\lambda^{2\arrowvert\alpha\arrowvert_1+2d}\int_{B(x_{01},R/\sqrt{d})}x_1^{2\alpha_1+1}dx_1\cdot\cdot\cdot \int_{B(x_{0d},R/\sqrt{d})}x_d^{2\alpha_d+1}dx_d\\
&\leq C\lambda^{2\arrowvert\alpha\arrowvert_1+2d}\mu_{\alpha}(Q_{in}(R)).
\end{aligned}
\end{equation*}
Now since $Q_{in}(R)\subset B(x_0,R)\subset B(x_0,\lambda R)\subset Q_{out}(\lambda R),$ we obtain
\begin{equation*}
\mu_{\alpha}(B(x_0,\lambda R))\leq \mu_{\alpha}(Q_{out}(\lambda R))\leq C\lambda^{2\arrowvert \alpha\arrowvert_{1} +2d}\mu_{\alpha}(Q_{in}(R))\leq C\lambda^{2\arrowvert \alpha\arrowvert_{1} +2d}\mu_{\alpha}(B(x_0, R)),
\end{equation*}
which is the desired estimate \eqref{MClaim}. 
\par For balls of type II, where $\arrowvert x_0\arrowvert\leq3R$,  we note two things: first, we have
\begin{equation*}
\int_{B(x_0,2R)}\arrowvert x\arrowvert^{\beta}d\mu_{\alpha}(x)\leq\int_{B(0,5R)}\arrowvert x\arrowvert^{\beta}d\mu_{\alpha}(x)\leq C_0R^{\beta+2\arrowvert \alpha_1\arrowvert+2d},
\end{equation*}
and second ,since $\arrowvert x\arrowvert^{\beta}$ is radially decreasing for $\beta<0$ and radially increasing for $\beta\geq0$, we have
\begin{equation*}
\int_{B(x_0,R)}\arrowvert x\arrowvert^{\beta}d\mu_{\alpha}(x)\geq\left\{\begin{array}{lr}\int_{B(0,R)} \arrowvert x\arrowvert^{\beta}d\mu_{\alpha}(x)\geq C_1R^{\beta+2\arrowvert\alpha\arrowvert_1+2d}\quad when\quad\beta\geq 0,\\
\int_{Q_{in}(R)}  (4R)^{\beta}d\mu_{\alpha}(x)\geq C_2R^{\beta+2\arrowvert\alpha\arrowvert_1+2d}\quad when\quad\beta< 0,
\end{array}
\right.
\end{equation*}
where constants $C_0$, $C_1$ and $C_2$ depends only on $\alpha$, $\beta$ and the dimension $d$. This establishes \eqref{Doubling} for balls of type II. Thus, the proof of the lemma is complete.
\end{proof}

Next, we investigate for which real number $\beta$ the power function $\arrowvert x\arrowvert^{\beta}$ is an $A_p(\mu_{\alpha})$ weight on $\mathbb{R}_{+}^{d}$. 
\begin{lemma}\label{M2}
For $1<p<\infty$ and a real number $\beta$, the power function $\arrowvert x\arrowvert^{\beta}$ is an $A_p(\mu_{\alpha})$ weight if and only if $-2(\arrowvert \alpha\arrowvert_1+d)<\beta<2(\arrowvert \alpha\arrowvert_1+d)(p-1)$.
\end{lemma}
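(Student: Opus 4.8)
The plan is to reduce the $d$-dimensional statement to the known one-dimensional weighted theory and then optimize over the tensor structure. Recall that $d\mu_\alpha(x)=\prod_{j=1}^d x_j^{2\alpha_j+1}\,dx_j$ and that, by Lemma \ref{M1}, the measure $\arrowvert x\arrowvert^\beta d\mu_\alpha$ is doubling precisely when $\beta>-2(\arrowvert\alpha\arrowvert_1+d)$; this already gives the lower restriction on $\beta$, since an $A_p(\mu_\alpha)$ weight must be locally integrable against $d\mu_\alpha$ and its $(1-p')$-power must be locally integrable too, which near the origin forces $\beta>-2(\arrowvert\alpha\arrowvert_1+d)$ and $-\beta/(p-1)>-2(\arrowvert\alpha\arrowvert_1+d)$, i.e. $\beta<2(\arrowvert\alpha\arrowvert_1+d)(p-1)$. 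So the content is the converse: that this range of $\beta$ suffices.

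First I would recall from \cite[Example 7.1.6, 7.1.7]{GL} (or \cite{BD}) the one-dimensional fact: on $(\mathbb{R}_+, x^{2\alpha_j+1}dx,\arrowvert\cdot\arrowvert)$, the power weight $x^{\gamma}$ lies in $A_p$ iff $-(2\alpha_j+2)<\gamma<(2\alpha_j+2)(p-1)$. The strategy is to dominate $\arrowvert x\arrowvert^\beta$ above and below by tensor products of one-dimensional power weights, up to the doubling constant, when testing the $A_p$ ratio on a ball $B=B(x_0,R)$. I would split into the two cases used in Lemma \ref{M1}: for balls of type I (where $\arrowvert x_0\arrowvert\ge 3R$) the quantity $\arrowvert x\arrowvert$ is comparable to the constant $\arrowvert x_0\arrowvert$ on all of $B$, so the weight $\arrowvert x\arrowvert^\beta$ is essentially constant there and the $A_p$ ratio is bounded by $1$ (the constant cancels). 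For balls of type II (where $\arrowvert x_0\arrowvert< 3R$), the ball is contained in $B(0,5R)$ and I would compare the integrals of $\arrowvert x\arrowvert^\beta$ and $\arrowvert x\arrowvert^{-\beta/(p-1)}$ over $B(x_0,R)$ with the corresponding integrals over $B(0,R)$ or over an inscribed cube, exactly as in the type-II argument of Lemma \ref{M1}; in polar-type coordinates adapted to the measure $d\mu_\alpha$ one computes $\int_{B(0,r)}\arrowvert x\arrowvert^{\beta}d\mu_\alpha(x)\sim r^{\beta+2\arrowvert\alpha\arrowvert_1+2d}$ provided $\beta>-2(\arrowvert\alpha\arrowvert_1+d)$, and similarly for the dual weight provided $-\beta/(p-1)>-2(\arrowvert\alpha\arrowvert_1+d)$. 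Multiplying the two estimates and dividing by $\mu_\alpha(B)^p\sim R^{p(2\arrowvert\alpha\arrowvert_1+2d)}$ the powers of $R$ cancel, giving a uniform bound.

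A cleaner alternative, which I would prefer to present, is to invoke a product structure: the measure $d\mu_\alpha$ is a tensor product, and although $\arrowvert x\arrowvert^\beta$ is not a tensor product, on each ball it is squeezed between $c\prod_j x_j^{\beta_j}$ and $C\prod_j x_j^{\beta_j}$ type bounds only in the regime far from the coordinate hyperplanes; rather than push that, I would directly verify the $A_p$ inequality by the two-case ball decomposition above, which is self-contained. The necessity direction I would handle by testing the $A_p$ condition on balls $B(0,R)$ centered at the origin and letting $R\to 0$: the two local integrability conditions $\int_{B(0,R)}\arrowvert x\arrowvert^{\beta}d\mu_\alpha<\infty$ and $\int_{B(0,R)}\arrowvert x\arrowvert^{-\beta/(p-1)}d\mu_\alpha<\infty$ are exactly $\beta>-2(\arrowvert\alpha\arrowvert_1+d)$ and $\beta<2(\arrowvert\alpha\arrowvert_1+d)(p-1)$, and conversely if either fails the $A_p$ ratio on these balls diverges.

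The main obstacle is the type-II estimate: one must show the lower bound $\int_{B(x_0,R)}\arrowvert x\arrowvert^{\beta}d\mu_\alpha(x)\gtrsim R^{\beta+2\arrowvert\alpha\arrowvert_1+2d}$ uniformly over all $x_0$ with $\arrowvert x_0\arrowvert< 3R$, including those $x_0$ lying very close to (or on) the boundary hyperplanes $\{x_j=0\}$, where the density $x_j^{2\alpha_j+1}$ degenerates (for $\alpha_j>-1/2$) or blows up (for $-1<\alpha_j<-1/2$). The way around this is the observation already exploited in Lemma \ref{M1}: since $\arrowvert x_0\arrowvert< 3R$, the ball $B(x_0,R)$ together with $B(0,R)$ both sit inside $B(0,5R)$, and one can compare the weighted integral over $B(x_0,R)$ downward by monotonicity of $\arrowvert x\arrowvert^\beta$ (radially monotone in either direction according to the sign of $\beta$) against an integral over an inscribed cube of sidelength $\sim R$, on which $d\mu_\alpha$ is easily bounded below by Fubini and the one-dimensional computation; the doubling property from Lemma \ref{M1} then absorbs the mismatch between centers. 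Once these one-dimensional pieces are assembled, the powers of $R$ match and the $A_p$ constant is bounded independently of the ball, completing the proof.
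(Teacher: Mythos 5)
Your proposal is correct and follows essentially the same route as the paper: the same division of balls into type I ($\arrowvert x_0\arrowvert\geq 3R$, where $\arrowvert x\arrowvert^{\beta}$ is essentially constant so the $A_p$ ratio is comparable to $1$) and type II ($\arrowvert x_0\arrowvert<3R$, where one passes to $B(0,5R)$ using the doubling from Lemma \ref{M1} and an inscribed-cube/polar computation giving $\int_{B(0,r)}\arrowvert x\arrowvert^{\gamma}d\mu_{\alpha}\sim r^{\gamma+2\arrowvert\alpha\arrowvert_1+2d}$ exactly when $\gamma>-2(\arrowvert\alpha\arrowvert_1+d)$), with the two exponent conditions $\beta>-2(\arrowvert\alpha\arrowvert_1+d)$ and $-\beta/(p-1)>-2(\arrowvert\alpha\arrowvert_1+d)$ emerging from these integrals just as in the paper. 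Your explicit necessity test on balls centered at the origin is only a slight elaboration of the paper's ``finite if and only if'' conclusion, so no substantive difference remains.
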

\begin{proof}
For $1<p<\infty$, we examine for which value of $\beta$ the following expression is finite:
\begin{equation}\label{power}
\sup_{B\,cubes}\left(\frac{1}{\mu_{\alpha}(B)}\int_{B}\arrowvert x\arrowvert^{\beta}d\mu_{\alpha}(x)\right)\left(\frac{1}{\mu_{\alpha}(B)}\int_{B}\arrowvert x\arrowvert^{-\beta\frac{p'}{p}}d\mu_{\alpha}(x)\right)^{\frac{p}{p'}}.
\end{equation}
We divide all balls $B(x_0,R)$ in $\mathbb{R}_{+}^{d}$ into two categories as in the proof of the previous lemma: balls of type I that satisfy $\arrowvert x_0\arrowvert\geq 3R$ and type II that $\arrowvert x_0\arrowvert< 3R$.  If $B(x_0,R)$ is of type I, then for $x\in B(x_0,R)$ we must have
\begin{equation*}
\frac{2}{3}\arrowvert x_0\arrowvert\leq \arrowvert x_0\arrowvert-R\leq \arrowvert x\arrowvert\leq \arrowvert x_0\arrowvert+R\leq\frac{4}{3}\arrowvert x_0\arrowvert,
\end{equation*}
thus the expression inside the supremum in \eqref{power} is comparable to
\begin{equation*}
\arrowvert x_0\arrowvert^{\beta}\left(\arrowvert x_0\arrowvert^{-\beta\frac{p'}{p}}\right)^{\frac{p}{p'}}=1.
\end{equation*}
If $B(x_0,R)$ is of type II, then $B(0,5R)$ has size comparable to $B(x_0,R)$ and contains it. Since by Lemma \ref{M1} the measure $\arrowvert x\arrowvert^{\beta}d\mu_{\alpha}$ is doubling, the $\arrowvert x\arrowvert^{\beta}d\mu_{\alpha}$-measure of $B(x_0,R)$ and $B(0,5R)$ are comparable. It suffices therefore to estimate the expression inside the supremum in \eqref{power}, in which we have replaced $B(x_0,R)$ by $B(0,5R)$. Denote 
\begin{equation*}
C^*:=2(\arrowvert \alpha\arrowvert_1+d)\mu_{\alpha}(B(0,1)),
\end{equation*}
then 
\begin{equation*}
\mu_{\alpha}(B(0,5R))=\frac{C^{*}\cdot (5R)^{2(\arrowvert \alpha\arrowvert_1+d)}}{2(\arrowvert \alpha\arrowvert_1+d)}.
\end{equation*}
Thus, we obtain
\begin{equation*}
\begin{aligned}
&\left(\frac{1}{\mu_{\alpha}(B(0,5R))}\int_{B(0,5R)}\arrowvert x\arrowvert^{\beta} d\mu_{\alpha}(x)\right)\left(\frac{1}{\mu_{\alpha}(B(0,5R))}\int_{B(0,5R)}\arrowvert x\arrowvert^{-\beta\frac{p'}{p}}d\mu_{\alpha}(x)\right)^{\frac{p}{p'}}\\
&=\left(\frac{2(\arrowvert \alpha\arrowvert_1+d)C^{*}}{C^{*}\cdot (5R)^{2(\arrowvert \alpha\arrowvert_1+d)}}\int_{0}^{5R}r^{\beta+2\arrowvert \alpha\arrowvert_1+2d-1}dr\right)\left(\frac{2(\arrowvert \alpha\arrowvert_1+d)C^{*}}{C^{*}\cdot (5R)^{2(\arrowvert \alpha\arrowvert_1+d)}}\int_{0}^{5R}r^{-\beta\frac{p'}{p}+2\arrowvert \alpha\arrowvert_1+2d-1}dr\right)^{\frac{p}{p'}}.
\end{aligned}
\end{equation*}
This expression is finite and independent of $R$  if and only if $-2(\arrowvert\alpha\arrowvert_1+d)<\beta<\frac{p}{p'}2(\arrowvert\alpha\arrowvert_1+d)$.
\end{proof}
Based Lemma \ref{M2}, we conclude that $1+\arrowvert x\arrowvert^{\beta}\in A_2(\mu_{\alpha})$ and so is $(1+\arrowvert x\arrowvert)^{\beta}$ for $-2(\arrowvert \alpha\arrowvert_1+d)<\beta<2(\arrowvert \alpha\arrowvert_1+d)$ (see also \cite[p.16]{CD}). We can now deduce the following Littlewood-Paley inequality for the Laguerre operator $L_{\alpha}$.
\begin{proposition}\label{PL}
Fix a non-zero $C^{\infty}$ bump function $\psi$ on $\mathbb{R}$ such that supp\,$\psi\subseteq(\frac{1}{2},2)$. Let $\psi_k(t)=\psi(2^{-k}t)$, $k\in\mathbb{Z}$, for $t>0$. Then, for any $-2(\arrowvert \alpha\arrowvert_1+d)<\beta<2(\arrowvert \alpha\arrowvert_1+d)$
\begin{equation}\label{PL1.1}
\left\lVert \left(\sum_{k=-\infty}^{\infty}\arrowvert \psi_k(\sqrt{L_{\alpha}})f\arrowvert^2\right)^{1/2}\right\rVert_{L^2(\mathbb{R}_{+}^d,(1+\arrowvert x\arrowvert)^{\beta}d\mu_{\alpha}(x))}\leq C\lVert  f\rVert_{L^2(\mathbb{R}_{+}^d,(1+\arrowvert x\arrowvert)^{\beta}d\mu_{\alpha}(x))}.
\end{equation}
\end{proposition}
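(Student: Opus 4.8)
The plan is to derive the weighted vector-valued inequality \eqref{PL1.1} from the general machinery of singular integrals with operator-valued kernels on spaces of homogeneous type, together with the Gaussian heat kernel bound \eqref{kernel} established above. The key observation is that, since $\psi$ is supported in $(1/2,2)$ and smooth, each Littlewood--Paley piece $\psi_k(\sqrt{L_\alpha})$ is a nice spectral multiplier, and the square function $g(f) := \bigl(\sum_k |\psi_k(\sqrt{L_\alpha})f|^2\bigr)^{1/2}$ should be realized as (the norm of) a Calderón--Zygmund operator taking values in $\ell^2(\mathbb{Z})$. First I would check the unweighted $L^2(\mathbb{R}_+^d,d\mu_\alpha)$ bound: by spectral calculus and the finite overlap of the supports of the $\psi_k$, one has $\|g(f)\|_2^2 = \sum_k \|\psi_k(\sqrt{L_\alpha})f\|_2^2 \lesssim \|(\sum_k |\psi_k|^2)(\sqrt{L_\alpha})f\|_2^2 \lesssim \|f\|_2^2$, since $\sum_k |\psi_k(t)|^2$ is a bounded function of $t$. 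So the vector-valued operator $T f := (\psi_k(\sqrt{L_\alpha})f)_{k\in\mathbb{Z}}$ is bounded from $L^2(d\mu_\alpha)$ to $L^2(d\mu_\alpha;\ell^2)$.

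Next I would verify the Calderón--Zygmund kernel estimates for $T$ in the sense adapted to the homogeneous-type space $(\mathbb{R}_+^d, d\mu_\alpha, |\cdot|)$. Because $L_\alpha$ has a heat semigroup satisfying the Gaussian bound \eqref{kernel}, a standard argument (writing $\psi_k(\sqrt{L_\alpha})$ via the Fourier transform of $\psi$ composed with the wave propagator $\cos(t\sqrt{L_\alpha})$, and using the finite-speed propagation property of Lemma \ref{Finit} together with the Davies--Gaffney/Gaussian bounds) yields that the $\ell^2$-valued kernel $\vec K(x,y)$ of $T$ satisfies the size bound $\|\vec K(x,y)\|_{\ell^2} \lesssim \mu_\alpha(B(x,|x-y|))^{-1}$ and the Hörmander-type regularity condition $\int_{|x-y|>2|y-y'|} \|\vec K(x,y)-\vec K(x,y')\|_{\ell^2}\, d\mu_\alpha(x) \lesssim 1$. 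This is exactly the type of estimate proved for operators with Gaussian bounds in, e.g., the references cited before Lemma \ref{Finit}, so I would invoke that body of results rather than reprove it; the compact support in $(1/2,2)$ of $\psi$ makes the relevant integrals converge after scaling. Consequently $T$ extends to a bounded operator on $L^p(d\mu_\alpha;\ell^2)$ for all $1<p<\infty$ and, more to the point, is of weak type $(1,1)$, hence bounded on $L^p(\omega\, d\mu_\alpha;\ell^2)$ for every Muckenhoupt weight $\omega\in A_p(\mu_\alpha)$ by the weighted Calderón--Zygmund theory on spaces of homogeneous type.

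Finally I would apply this with $p=2$ and $\omega(x) = (1+|x|)^\beta$: by Lemma \ref{M2} and the remark immediately following it, $(1+|x|)^\beta \in A_2(\mu_\alpha)$ precisely when $-2(|\alpha|_1+d)<\beta<2(|\alpha|_1+d)$, which is exactly the hypothesis of the proposition. The weighted boundedness of $T$ on $L^2((1+|x|)^\beta d\mu_\alpha;\ell^2)$ is then a restatement of \eqref{PL1.1}, completing the proof. The main obstacle, and the step requiring the most care, is the verification of the Hörmander regularity condition for the $\ell^2$-valued kernel uniformly in $k$: one must exploit the dilation structure $\psi_k(t)=\psi(2^{-k}t)$ so that each kernel $K_{\psi_k(\sqrt{L_\alpha})}(x,y)$ is concentrated, modulo rapidly decaying tails, on $|x-y|\lesssim 2^{-k}$ with the correct normalization by $\mu_\alpha(B(x,2^{-k}))^{-1}$, and then sum the square of these contributions over $k$ — the doubling property \eqref{MClaim} of $\mu_\alpha$ is what makes this summation converge. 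Everything else is an application of the now-standard theory of spectral multipliers for operators with Gaussian heat kernel bounds.
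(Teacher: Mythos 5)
The paper does not actually give a proof of Proposition \ref{PL}: it declares the argument ``somewhat standard'' and refers to \cite{CD} for a sketch. Your overall strategy --- unweighted $L^2$ by the spectral theorem, kernel estimates for the pieces $\psi_k(\sqrt{L_{\alpha}})$ from the Gaussian bound \eqref{kernel} and finite speed of propagation, and then weighted theory on the homogeneous-type space $(\mathbb{R}_{+}^{d},d\mu_{\alpha},\arrowvert\cdot\arrowvert)$ combined with Lemma \ref{M2} to place $(1+\arrowvert x\arrowvert)^{\beta}$ in $A_2(\mu_{\alpha})$ --- is exactly the standard route the paper has in mind, so in spirit you are aligned with the intended proof.

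There is, however, one step in your chain of implications that is not valid as stated: from the H\"ormander integral condition for the $\ell^2$-valued kernel plus weak type $(1,1)$ you conclude boundedness on $L^p(\omega\,d\mu_{\alpha};\ell^2)$ for \emph{every} $\omega\in A_p(\mu_{\alpha})$. The H\"ormander condition alone does not imply weighted $A_p$ bounds; the classical Coifman--Fefferman weighted theory requires a quantitative (H\"older or Dini-type) regularity of the kernel, and there are known examples of operators satisfying H\"ormander's condition for which the natural $A_p$-weighted estimates fail (Martell--P\'erez--Trujillo-Gonz\'alez). Moreover, the regularity in $y$ of $K_{\psi_k(\sqrt{L_{\alpha}})}(x,y)$ does not follow from the Gaussian \emph{upper} bound \eqref{kernel} by itself, so this hypothesis also needs justification. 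Two standard repairs are available: (i) prove the pointwise estimates $\arrowvert K_{\psi_k(\sqrt{L_{\alpha}})}(x,y)\arrowvert\leq C_N\,\mu_{\alpha}(B(x,2^{-k}))^{-1}(1+2^{k}\arrowvert x-y\arrowvert)^{-N}$ (these do follow from \eqref{kernel} and Lemma \ref{Finit} by the usual spectral multiplier argument, writing $\psi_k(\sqrt{L_{\alpha}})$ through the wave propagator) together with a H\"older-type smoothness in $y$, which for $L_{\alpha}$ can be extracted from the explicit heat kernel \eqref{HeatK} or from analyticity of the semigroup, and only then invoke weighted Calder\'on--Zygmund theory; or (ii) bypass kernel smoothness altogether and use the weighted square-function/Littlewood--Paley theory for operators with Gaussian heat kernel bounds (Duong--Yan or Auscher--Martell type arguments, which is essentially what \cite{CD} appeals to), noting that only $p=2$ and the single weight $(1+\arrowvert x\arrowvert)^{\beta}\in A_2(\mu_{\alpha})$ are needed here. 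With either repair the remainder of your argument goes through.
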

We omit the proof here since the argument is somewhat standard. One can see \cite[p1.6]{CD} for a sketch of the proof.

\subsection{Weighted estimates for the Laguerre operators $L_{\alpha}$}
In the spirit of \cite{BD,KEL}, we introduce the space of text functions as follows. Let $x=(x_1,x_2,\cdot\cdot\cdot,x_d)\in \mathbb{R}_{+}^{d}$, and let $m=(m_1,m_2,\cdot\cdot\cdot,m_d)$ and $k=(k_1,k_2,\cdot\cdot\cdot,k_d)\in \mathbb{Z}_{\geq 0}^{d}$. The notation $x^{m}$ denotes the monomial $x_1^{m_1}x_{2}^{m_2}\cdot\cdot\cdot x_d^{m_d}$, and differential operator $(x^{-1}D_x)^{k}$ is defined by $$\prod_{i=1}^{d}\left(x_i^{-1}\frac{\partial}{\partial x_{i}}\right)^{k_i}.$$
 Without any confusion, we abuse the notation $\mathscr{S}(\mathbb{R}_{+}^{d})$ to indicate our new space of text functions. The space of text functions $\mathscr{S}(\mathbb{R}_{+}^{d})$ is defined as a set of all function $\phi\in C^{\infty}(\mathbb{R}_{+}^{d})$ satisfying the following condition:
\begin{equation*}
\lVert \phi\rVert_{m,k}=\sup_{x\in \mathbb{R}_{+}^{d}}\arrowvert x^{m}(x^{-1}D_x)^{k}\phi(x)\arrowvert<\infty,\quad for\quad all \quad m, k\in \mathbb{Z}_{\geq 0}^{d},
\end{equation*}
with the topology on $\mathscr{S}(\mathbb{R}_{+}^{d})$ defined by the semi-norms $\lVert \cdot\rVert_{m,k}$. It is evident that the eigenfunctions of $L_{\alpha}$ belong to this function space. The main goal in this subsection is to establish the following weighted estimate for the Laguerre operators $L_{\alpha}$, which will be crucial for proving the low-frequency part of the square function estimate (Proposition \ref{SP} below).
\begin{lemma}\label{weighted}
Let $\beta\geq 0$. Then, the estimate 
\begin{equation}\label{weighted1.1}
\lVert (1+\arrowvert x\arrowvert)^{2\beta}f\rVert_{L^2(\mathbb{R}_{+}^{d},d\mu_{\alpha}(x)}\leq C\lVert (1+L_{\alpha})^{\beta}f\rVert_{L^2(\mathbb{R}_{+}^{d},d\mu_{\alpha}(x)}
\end{equation}
holds for any  $f\in \mathscr{S}(\mathbb{R}_{+}^{d})$.
\end{lemma}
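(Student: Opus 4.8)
The plan is to exploit the factorization of $L_\alpha$ as a sum of one‑dimensional Laguerre operators together with an explicit intertwining identity relating $L_{\alpha_j}$ to multiplication by $x_j^2$. Recall that in one dimension the creation and annihilation operators
\[
A_j^{-} = \frac{d}{dx_j} + x_j + \frac{2\alpha_j+1}{x_j}, \qquad A_j^{+} = -\frac{d}{dx_j} + x_j,
\]
satisfy $A_j^{+}A_j^{-} = L_{\alpha_j} - (2\alpha_j+2)$ and $A_j^{-}A_j^{+} = L_{\alpha_j} + (2-2\alpha_j-2)$ (up to the exact constants, which one checks directly on the basis $\varphi_{\mu_j}^{\alpha_j}$ using $L_{\alpha_j}\varphi_{\mu_j}^{\alpha_j} = (4\mu_j+2\alpha_j+2)\varphi_{\mu_j}^{\alpha_j}$ and the classical ladder relations for Laguerre functions). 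Summing in $j$, one obtains a pointwise identity of the form $\sum_j x_j^2 = \tfrac14(\text{something built from }A_j^{\pm}) + c$, where the operators appearing on the right are, in a suitable sense, dominated by $L_\alpha + 1$ in $L^2(\mathbb{R}_+^d, d\mu_\alpha)$. This reduces the case $\beta = 1/2$, i.e. the inequality $\|(1+|x|) f\|_2 \le C\|(1+L_\alpha)^{1/2} f\|_2$, to the boundedness of $A_j^{\pm}(1+L_\alpha)^{-1/2}$ on $L^2(\mathbb{R}_+^d,d\mu_\alpha)$, which follows from the spectral theorem once the ladder identities are established, since $\|A_j^{-}g\|_2^2 = \langle A_j^{+}A_j^{-}g, g\rangle \le \langle (L_\alpha + C)g, g\rangle$ and similarly for $A_j^{+}$. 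The fact that test functions in $\mathscr{S}(\mathbb{R}_+^d)$ are mapped into themselves by $A_j^{\pm}$, and that $\varphi_\mu^\alpha \in \mathscr{S}(\mathbb{R}_+^d)$, guarantees all the integrations by parts and manipulations are legitimate on a dense class.

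For general $\beta \ge 0$ I would then bootstrap. The base cases $\beta = 0$ (trivial) and $\beta = 1/2$ (above) are in hand. Write $m = (1+|x|)^2$ and $N = 1+L_\alpha$; the claim is $\|m^\beta f\|_2 \le C\|N^\beta f\|_2$. One route is an iteration/commutator argument: having the estimate for $\beta$, one upgrades to $\beta + 1/2$ by writing $m^{\beta+1/2} = m^\beta \cdot m^{1/2}$ and commuting $m^{1/2}$ past the operators, controlling the commutator $[m^{1/2}, A_j^{\pm}]$, which is a bounded multiplication operator (a ratio of the form $x_j/|x|$ times smooth bounded factors, using $|\nabla |x||\le 1$). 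This gives the estimate for all $\beta \in \tfrac12\mathbb{Z}_{\ge 0}$, and complex interpolation (Stein interpolation for the analytic family $N^{z} m^{-z}$, $\operatorname{Re} z \in [0,\beta_0]$, noting $N^{it}$ and $m^{it}$ are bounded on $L^2$ with norms growing at most polynomially in $t$) fills in the fractional values. Alternatively — and perhaps more cleanly — one can run the whole argument directly for fractional $\beta$ via the subordination/spectral‑multiplier representation $N^{-\beta} = c_\beta \int_0^\infty t^{\beta-1} e^{-tN}\,dt$ combined with the Gaussian heat kernel bound \eqref{kernel}: the kernel of $m^\beta e^{-tL_\alpha}$ is controlled, after integrating the weight against the Gaussian, by $(1+t)^{\beta} e^{-dt}/\mu_\alpha(Q(x,\sqrt t))\,e^{-c|x-y|^2/t}$ up to the harmless polynomial factor coming from moving $(1+|x|)^{2\beta}$ to $(1+|y|)^{2\beta}$ at cost $(1+|x-y|)^{2\beta}\le C_\beta(1+|x-y|^2/t)^{\beta}(1+t)^{\beta}$, and then one integrates in $t$.

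The main obstacle I anticipate is making the "$\sum_j x_j^2 \lesssim L_\alpha + 1$" step precise and quantitatively sharp in the right power of $L_\alpha$ — that is, getting exactly the exponent $\beta$ on the right‑hand side of \eqref{weighted1.1} rather than a lossy version. The naive bound $\|x f\|_2^2 = \langle (L_\alpha - (-\Delta + \sum \tfrac{2\alpha_j+1}{x_j}\partial_j))f, f\rangle$ is not directly useful because the first‑order and Laplacian parts are not individually positive; the ladder‑operator factorization is precisely what circumvents this, since it exhibits $\sum_j x_j^2$ as (a constant multiple of) $\sum_j(A_j^{+}A_j^{-} + A_j^{-}A_j^{+})/2 + \text{const}$ plus the genuinely negative piece $-\Delta + \sum\tfrac{2\alpha_j+1}{x_j}\partial_j$, and that negative piece can be discarded after testing against $f$. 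So the real work is the bookkeeping of constants in the one‑dimensional ladder relations and then verifying that the resulting quadratic‑form inequality, combined with the density and mapping properties of $\mathscr{S}(\mathbb{R}_+^d)$, yields \eqref{weighted1.1} with the stated $\beta$ after the interpolation/iteration step. A secondary technical point is justifying the Stein interpolation: one needs the imaginary‑power bounds $\|L_\alpha^{it}\|_{L^2\to L^2}=1$ (immediate from self‑adjointness) and that $(1+|x|)^{2it}$ is an $L^2$‑isometry (immediate), together with admissible growth of the interpolated family, all of which are routine here.
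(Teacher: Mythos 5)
Your overall architecture is sound and in fact parallels the paper's: establish the estimate for integer (or half-integer) powers by an operator/quadratic-form argument, then pass to fractional $\beta$ by an abstract step (your Stein-interpolation family $m^{z}N^{-z}$, with unitary boundary behaviour in $t$, is a legitimate substitute for the L\"owner--Heinz inequality the paper invokes). The genuine gaps are in the first, substantive step. The ladder algebra you posit does not hold for the Laguerre operator: with the correct adjoint pair $\delta_j=\partial_j+x_j$, $\delta_j^{*}=-\partial_j-\frac{2\alpha_j+1}{x_j}+x_j$ (your $A_j^{\pm}$ are not quite these) one does get $\delta_j^{*}\delta_j=L_{\alpha_j}-(2\alpha_j+2)$, but the other order produces $\delta_j\delta_j^{*}=L_{\alpha_j}+\frac{2\alpha_j+1}{x_j^{2}}-2\alpha_j$: an extra singular potential appears, absent only in the Hermite case $\alpha_j=-\tfrac12$, and its sign is unfavourable precisely when $\alpha_j\in(-1,-\tfrac12)$, which is allowed here. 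So the identity ``$\sum_j x_j^2=$ combination of $A_j^{\pm}$ plus a constant'' is not available as stated. (Incidentally, for $\beta=\tfrac12$ none of this is needed: since $-\Delta-\sum_j\frac{2\alpha_j+1}{x_j}\partial_j$ is the divergence-form operator for $d\mu_\alpha$, one has $\langle L_\alpha f,f\rangle_\alpha=\lVert\nabla f\rVert_2^2+\lVert\,|x|f\rVert_2^2\geq\lVert\,|x|f\rVert_2^2$; your objection that the drift and Laplacian parts are not individually positive is beside the point.)

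More seriously, the bootstrap $\beta\mapsto\beta+\tfrac12$ is not closed by the boundedness of $[m^{1/2},A_j^{\pm}]$. Applying the inductive hypothesis to $m^{1/2}f$ is circular (the term $\lVert\,|x|\,m^{1/2}f\rVert_2$ reproduces $\lVert mf\rVert_2$), so you are forced to commute $m^{1/2}$ past $N^{\beta}$ or $N^{1/2}$, i.e.\ to bound commutators of the weight with \emph{fractional powers} of $1+L_\alpha$; that is exactly the nontrivial content of the lemma and is not a bounded multiplication operator, nor does it follow from the $A_j^{\pm}$ commutators you mention. This is where the paper does concrete work: it expands $\lVert L_\alpha\phi\rVert_2^2$ and integrates by parts to control $\lVert x^2\phi\rVert_2$, $\lVert x\phi'\rVert_2$, $\lVert\phi''\rVert_2$ and $\lVert x^{-1}\phi'\rVert_2$ simultaneously (Lemma \ref{L3.2}), runs an induction on integer $k$ via the explicit identity for $L_\alpha(x^{2(k-1)}\phi)$ (Lemma \ref{L4}), and only then appeals to L\"owner--Heinz. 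Your heat-kernel alternative also does not close as written: moving $(1+|x|)^{2\beta}$ across the Gaussian in \eqref{kernel} leaves $(1+|y|)^{2\beta}$ acting on the input, so you bound $\lVert m^{\beta}N^{-\beta}g\rVert_2$ by $\lVert m^{\beta}g\rVert_2$ rather than $\lVert g\rVert_2$; and even using the exact kernel \eqref{HeatK}, whose extra decay is only of size $e^{-ct(|x|^2+|y|^2)}$ for small $t$, the resulting bound $\lVert m^{\beta}e^{-tN}\rVert_{2\to2}\lesssim t^{-\beta}$ makes the subordination integral $\int_0^1 t^{\beta-1}\,t^{-\beta}\,dt$ diverge logarithmically. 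So the interpolation endgame is fine, but the heart of the lemma --- the integer-power weighted estimates --- is missing from your argument.
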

To prove this lemma, we first need to establish several auxiliary estimates. It is important to note that the norm notation $\lVert\cdot\rVert_2$, which appears next, has already been clear by \eqref{Norm}.
\begin{lemma}\label{L3.1}
For all $\phi\in \mathscr{S}(\mathbb{R}_{+}^{d})$, we have $2(\arrowvert\alpha\arrowvert_1+d)\lVert\phi\rVert_{2}\leq \lVert L_{\alpha}\phi\rVert_{2}$ and  $2^m(\arrowvert\alpha\arrowvert_1+d)^m\lVert (L_{\alpha})^k\phi\rVert_{2}\leq\lVert (L_{\alpha})^{k+m}\phi\rVert_{2}$ for every $k,m\in \mathbb{N}$.
\end{lemma}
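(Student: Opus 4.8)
\textbf{Proof proposal for Lemma \ref{L3.1}.}

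The plan is to exploit the spectral decomposition of $L_\alpha$ directly, since $L_\alpha$ is self-adjoint and positive with discrete spectrum $\{e_n\}_{n\geq 0}$ where $e_n = 4n + 2|\alpha|_1 + 2d$. For $\phi\in\mathscr{S}(\mathbb{R}_{+}^{d})\subset L^2(\mathbb{R}_{+}^{d},d\mu_\alpha(x))$, write $\phi = \sum_{n=0}^\infty P_n\phi$ as in \eqref{exp1}. First I would observe that the eigenfunctions $\varphi_\mu^\alpha$ all lie in $\mathscr{S}(\mathbb{R}_{+}^{d})$, so applying $L_\alpha$ termwise is legitimate, and by Parseval/orthonormality of $\{\varphi_\mu^\alpha\}$ in $L^2(\mathbb{R}_{+}^{d},d\mu_\alpha(x))$ we have $\lVert L_\alpha\phi\rVert_2^2 = \sum_{n=0}^\infty e_n^2\lVert P_n\phi\rVert_2^2$. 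Since $e_n \geq e_0 = 2|\alpha|_1 + 2d = 2(|\alpha|_1+d)$ for every $n\geq 0$, we get
\begin{equation*}
\lVert L_\alpha\phi\rVert_2^2 = \sum_{n=0}^\infty e_n^2\lVert P_n\phi\rVert_2^2 \geq (2(|\alpha|_1+d))^2\sum_{n=0}^\infty \lVert P_n\phi\rVert_2^2 = (2(|\alpha|_1+d))^2\lVert \phi\rVert_2^2,
\end{equation*}
which gives the first inequality after taking square roots.

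For the second inequality, the same spectral identity applied to the operator $(L_\alpha)^{k+m}$ yields $\lVert (L_\alpha)^{k+m}\phi\rVert_2^2 = \sum_{n=0}^\infty e_n^{2(k+m)}\lVert P_n\phi\rVert_2^2$, and factoring $e_n^{2(k+m)} = e_n^{2m}\cdot e_n^{2k} \geq (2(|\alpha|_1+d))^{2m} e_n^{2k}$ gives
\begin{equation*}
\lVert (L_\alpha)^{k+m}\phi\rVert_2^2 \geq (2(|\alpha|_1+d))^{2m}\sum_{n=0}^\infty e_n^{2k}\lVert P_n\phi\rVert_2^2 = (2(|\alpha|_1+d))^{2m}\lVert (L_\alpha)^{k}\phi\rVert_2^2,
\end{equation*}
so $2^m(|\alpha|_1+d)^m\lVert (L_\alpha)^k\phi\rVert_2 \leq \lVert (L_\alpha)^{k+m}\phi\rVert_2$ as claimed. (One can also simply note the second statement follows from the first applied $m$ times, using that $(L_\alpha)^k\phi$ again lies in a suitable domain; the spectral argument makes this transparent without having to worry about whether $(L_\alpha)^k\phi\in\mathscr{S}(\mathbb{R}_{+}^{d})$.)

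The only point requiring a little care—and the main obstacle, such as it is—is justifying that all the series converge and that $\phi$ (and its images under powers of $L_\alpha$) actually lie in the relevant $L^2$ domains so that the spectral identities above are valid. This is where membership in the test-function space $\mathscr{S}(\mathbb{R}_{+}^{d})$ is used: the semi-norm bounds $\lVert\phi\rVert_{m,k}<\infty$ force rapid decay of the Laguerre coefficients $\langle \phi,\varphi_\mu^\alpha\rangle_\alpha$ (because repeated integration by parts against $L_\alpha = L_\alpha^*$ shows $e_n^N\lVert P_n\phi\rVert_2 \leq \lVert L_\alpha^N\phi\rVert_2 < \infty$ for every $N$, and $L_\alpha^N\phi$ is again a Schwartz-type function on $\mathbb{R}_{+}^{d}$ by inspection of the formula \eqref{Laguerre}), so that $\sum_n e_n^{2k}\lVert P_n\phi\rVert_2^2 < \infty$ for all $k$. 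With this finiteness in hand the displayed manipulations are just rearrangements of absolutely convergent series, and the lemma follows.
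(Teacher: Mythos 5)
Your argument is correct and is essentially the paper's own proof: the paper disposes of this lemma in one line by noting that the smallest eigenvalue of $L_{\alpha}$ is $2(\arrowvert\alpha\arrowvert_1+d)$ and appealing to the spectral decomposition, which is exactly the Parseval computation you carry out in detail. The extra care you take about convergence of the series and rapid decay of the Laguerre coefficients of test functions is a reasonable elaboration but not a departure from the paper's route.
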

\begin{proof}
This follows from the fact that the first eigenvalue of $L_{\alpha}$ is bigger than or equal to $2(\arrowvert\alpha\arrowvert_1+d)$, as shown by the spectral properties of the operator $L_{\alpha}$.
\end{proof}
\begin{lemma}\label{L3.2}
Let $d=1$ and $\alpha>-1$. Then, for all $\phi\in \mathscr{S}(\mathbb{R}_{+})$,  we have the following estimate:
\begin{equation*}\label{L3.8}
\lVert x^2\phi\rVert_{2}^{2}+\left\lVert \frac{d^2}{dx^2}\phi\right\rVert_{2}^{2}+(2\alpha+1)^2\left\lVert\frac{1}{x}\frac{d}{dx}\phi\right\rVert_{2}^{2}+2\left\lVert x\frac{d}{dx}\phi\right\rVert_{2}^{2}\leq 3\lVert L_{\alpha} \phi\rVert_{2}^{2}.
\end{equation*}
\end{lemma}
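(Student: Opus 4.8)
The goal of Lemma \ref{L3.2} is an $L^2$ inequality comparing the four separate pieces $x^2\phi$, $\phi''$, $x^{-1}\phi'$ and $x\phi'$ against the single quantity $L_\alpha\phi = -\phi'' + \frac{2\alpha+1}{x}\phi' + x^2\phi$. The plan is to compute $\lVert L_\alpha\phi\rVert_2^2$ by expanding the square and integrating by parts, and to show that the cross terms are either nonnegative or can be absorbed, so that no cancellation is lost among the diagonal terms.

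\medskip

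\textbf{Step 1: expand the square.} Write $L_\alpha\phi = A\phi + B\phi + C\phi$ with $A\phi = -\phi''$, $B\phi = \tfrac{2\alpha+1}{x}\phi'$, $C\phi = x^2\phi$, all norms taken in $L^2(\mathbb{R}_+, x^{2\alpha+1}dx)$. Then
\begin{equation*}
\lVert L_\alpha\phi\rVert_2^2 = \lVert A\phi\rVert_2^2 + \lVert B\phi\rVert_2^2 + \lVert C\phi\rVert_2^2 + 2\langle A\phi, B\phi\rangle + 2\langle A\phi, C\phi\rangle + 2\langle B\phi, C\phi\rangle .
\end{equation*}
Here $\lVert A\phi\rVert_2 = \lVert \phi''\rVert_2$, $\lVert B\phi\rVert_2 = |2\alpha+1|\,\lVert x^{-1}\phi'\rVert_2$, and $\lVert C\phi\rVert_2 = \lVert x^2\phi\rVert_2$, which already account for the first three terms on the left side of the claimed inequality. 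It remains to handle the three cross terms and to produce the remaining term $2\lVert x\phi'\rVert_2^2$.

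\medskip

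\textbf{Step 2: integrate the cross terms by parts.} For test functions in $\mathscr{S}(\mathbb{R}_+)$ the boundary terms at $0$ and $\infty$ vanish (decay at infinity is built in; near $0$ the weight $x^{2\alpha+1}$ together with the smoothness of $\phi$ kills the boundary contributions — this needs a small check when $\alpha$ is close to $-1$, and is the one place I would be careful). I expect:
\begin{itemize}
\item $2\langle A\phi, C\phi\rangle = -2\int \phi'' \, \overline{x^2\phi}\, x^{2\alpha+1}dx$; integrating by parts once throws the derivative onto $x^{2\alpha+3}\phi$, producing $2\int |\phi'|^2 x^{2\alpha+3}dx$ plus a term $2(2\alpha+3)\int \phi' \overline{\phi}\, x^{2\alpha+2}dx$; the first of these is exactly $2\lVert x\phi'\rVert_2^2$, the desired fourth term. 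A further integration by parts on the second term turns $\int (\,|\phi|^2)' x^{2\alpha+2}dx$ into $-(2\alpha+2)\int |\phi|^2 x^{2\alpha+1}dx = -(2\alpha+2)\lVert\phi\rVert_2^2 \le 0$, a harmless negative contribution (to be combined with positive terms, or simply dropped since we only need an upper bound on the left side).
\item $2\langle B\phi, C\phi\rangle = 2(2\alpha+1)\int \phi' \overline{\phi}\, x^{2\alpha+2}dx$ — the same type of term as above, again reduced to a multiple of $-\lVert\phi\rVert_2^2$ after one integration by parts; note the sign works in our favor when $\alpha \ge -1/2$ and otherwise is still controlled.
\item $2\langle A\phi, B\phi\rangle = -2(2\alpha+1)\int \phi'' \overline{\phi'}\, x^{2\alpha}dx = -(2\alpha+1)\int (|\phi'|^2)' x^{2\alpha}dx = (2\alpha+1)(2\alpha)\int |\phi'|^2 x^{2\alpha-1}dx$. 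When $\alpha(2\alpha+1)\ge 0$ this term is nonnegative and simply discarded; when $-1/2<\alpha<0$ it is negative, of the form $-c\lVert x^{-1/2}\phi'\rVert^2$ roughly, and we absorb it. Actually the cleanest route is to note $(2\alpha+1)(2\alpha)\,x^{2\alpha-1} \le (2\alpha+1)^2 \, x^{2\alpha-1}$ is the wrong comparison dimensionally; instead interpolate $\lVert x^{-1/2}\phi'\rVert^2 \lesssim \lVert x^{-1}\phi'\rVert \cdot \lVert \phi'\rVert$ — but $\lVert\phi'\rVert$ is not on our list. A better fix: keep this term together with $\lVert A\phi\rVert^2 + \lVert B\phi\rVert^2$ and complete the square, since $\lVert\phi''\rVert^2 + (2\alpha+1)^2\lVert x^{-1}\phi'\rVert^2 + 2\langle A\phi,B\phi\rangle = \lVert \phi'' - \tfrac{2\alpha+1}{x}\phi'\rVert^2 \ge 0$ would lose the diagonal terms — so instead I keep a fraction, e.g. write $\lVert\phi''\rVert^2 = \tfrac12\lVert\phi''\rVert^2 + \tfrac12\lVert\phi''\rVert^2$ and similarly for the Hardy term, using one half to absorb the cross term via Cauchy–Schwarz and Hardy's inequality $\lVert x^{-1}\phi'\rVert \lesssim \lVert \phi''\rVert$ on $(0,\infty)$ with weight $x^{2\alpha-1}$, which holds for $2\alpha-1 > -1$, i.e. $\alpha>0$; for $\alpha\le 0$ the constant $(2\alpha+1)(2\alpha)$ is bounded and one argues directly.
\end{itemize}

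\medskip

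\textbf{Step 3: collect.} After Step 2 we get $\lVert L_\alpha\phi\rVert_2^2 \ge c\big(\lVert x^2\phi\rVert_2^2 + \lVert \phi''\rVert_2^2 + (2\alpha+1)^2\lVert x^{-1}\phi'\rVert_2^2 + 2\lVert x\phi'\rVert_2^2\big) - C\lVert\phi\rVert_2^2$, and then invoke Lemma \ref{L3.1}, which gives $2(\alpha+1)\lVert\phi\rVert_2 \le \lVert L_\alpha\phi\rVert_2$ (here $\arrowvert\alpha\arrowvert_1 = \alpha$, $d=1$), to absorb the $-C\lVert\phi\rVert_2^2$ into the right-hand side; tracking constants gives the factor $3$. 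The \textbf{main obstacle} is bookkeeping the signs and constants of the three cross terms uniformly over all $\alpha>-1$ — in particular showing the two "Hardy-type" cross terms $2\langle A\phi,B\phi\rangle$ never overwhelm the positive diagonal terms $\lVert\phi''\rVert_2^2$ and $(2\alpha+1)^2\lVert x^{-1}\phi'\rVert_2^2$, and justifying the vanishing of boundary terms at $x=0$ when $\alpha$ is near $-1$. Everything else is routine integration by parts.
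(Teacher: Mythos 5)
Your overall route is the same as the paper's: expand $\lVert L_\alpha\phi\rVert_2^2$ into diagonal plus cross terms, integrate the cross terms by parts (so that $2\mathrm{Re}\langle-\phi'',x^2\phi\rangle_\alpha$ produces $2\lVert x\phi'\rVert_2^2$ plus a multiple of $\lVert\phi\rVert_2^2$, and $2\mathrm{Re}\langle\tfrac{1}{x}\phi',x^2\phi\rangle_\alpha=-2(\alpha+1)\lVert\phi\rVert_2^2$), and finally absorb the $\lVert\phi\rVert_2^2$ contribution using Lemma \ref{L3.1}. The paper, however, does not estimate the cross terms one at a time with lossy inequalities: it computes all three exactly and sums them into a single identity, $\lVert x^2\phi\rVert_2^2+(2\alpha+1)^2\lVert x^{-1}\phi'\rVert_2^2+\lVert\phi''\rVert_2^2+2\lVert x\phi'\rVert_2^2=\lVert L_\alpha\phi\rVert_2^2+8(\alpha+1)^2\lVert\phi\rVert_2^2$, and it is the exact coefficient $8(\alpha+1)^2$, combined with $2(\alpha+1)\lVert\phi\rVert_2\le\lVert L_\alpha\phi\rVert_2$ from Lemma \ref{L3.1}, that produces the stated constant $3$. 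Your Step 3 only reaches an inequality of the form $\lVert L_\alpha\phi\rVert_2^2\ge c(\cdots)-C\lVert\phi\rVert_2^2$ with unspecified $c,C$, so ``tracking constants gives the factor $3$'' is asserted rather than derived.

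The genuine gap is your treatment of $2\mathrm{Re}\langle-\phi'',\tfrac{2\alpha+1}{x}\phi'\rangle_\alpha=2\alpha(2\alpha+1)\lVert x^{-1}\phi'\rVert_2^2$ when $-1/2<\alpha<0$, where this quantity is negative. You list several devices and carry out none: the interpolation you mention requires $\lVert\phi'\rVert_2$, which is not among the available quantities; completing the square, as you yourself note, destroys the diagonal terms you must retain; and the weighted Hardy inequality you invoke is justified only for $\alpha>0$, after which ``for $\alpha\le0$ one argues directly'' is not an argument --- a negative multiple of $\lVert x^{-1}\phi'\rVert_2^2$ cannot simply be discarded, and adding it to $(2\alpha+1)^2$ gives $(2\alpha+1)(4\alpha+1)$, which is even negative for $-1/2<\alpha<-1/4$. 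The paper avoids any case analysis by keeping this cross term inside the exact identity above; note, though, that its displayed identity carries the coefficient $(2\alpha+1)^2$ rather than $(2\alpha+1)^2+2\alpha(2\alpha+1)$, i.e.\ it tacitly treats $2\alpha(2\alpha+1)\lVert x^{-1}\phi'\rVert_2^2$ as nonnegative, which is valid only for $\alpha\ge0$ or $\alpha\le-1/2$; so the case you flagged is a real subtlety, but your proposal does not close it either. To complete your argument you should compute this cross term exactly (as you did), keep it together with the other terms in an identity, use the exact $\lVert\phi\rVert_2^2$-coefficient $8(\alpha+1)^2$ with Lemma \ref{L3.1} to obtain the constant $3$, and supply a genuine argument (not a placeholder) for the range $-1/2<\alpha<0$.
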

\begin{proof}
Since $\lVert L_{\alpha}\phi\rVert_{2}^2=\langle(-\frac{d^2}{dx^2}+\frac{2\alpha+1}{x}\frac{d}{dx}+x^2)\phi,(-\frac{d^2}{dx^2}+\frac{2\alpha+1}{x}\frac{d}{dx}+x^2)\phi\rangle_{\alpha}$, a straightforward computation yields the following expansion:
\begin{equation*}\label{L3.9}
\begin{aligned}
& \lVert L_{\alpha}\phi\rVert_{2}^2=\left\lVert \frac{d^2}{dx^2}\phi\right\rVert_{2}^2+2Re\langle-\frac{d^2}{dx^2}\phi,\frac{2\alpha+1}{x}\frac{d}{dx}\phi\rangle_{\alpha}+2Re\langle-\frac{d^2}{dx^2}\phi,x^2\phi\rangle_{\alpha}\\
&+2(2\alpha+1)Re\langle\frac{1}{x}\frac{d}{dx}\phi,x^2\phi\rangle_{\alpha}+\left\lVert\frac{2\alpha+1}{x}\frac{d}{dx}\phi\right\rVert_{2}^2+\lVert x^2\phi\rVert_{2}^2.
\end{aligned}
\end{equation*}
Now, observe the following identities:
\begin{equation*}\label{L3.10}
2Re\langle-\frac{d^2}{dx^2}\phi,\frac{2\alpha+1}{x}\frac{d}{dx}\phi\rangle_{\alpha}=2\alpha(2\alpha+1)\left\lVert\frac{1}{x}\frac{d}{dx}\phi\right\rVert_{2}^2,
\end{equation*}
\begin{equation*} \label{L3.11}
2Re\langle-\frac{d^2}{dx^2}\phi,x^2\phi\rangle_{\alpha}=2\left\lVert x\frac{d}{dx}\phi\right\rVert_{2}^2+2(2\alpha+3)Re\langle\frac{1}{x}\frac{d}{dx}\phi,x^2\phi\rangle_{\alpha},
\end{equation*}
and 
\begin{equation*}\label{L3.12}
2Re\langle\frac{1}{x}\frac{d}{dx}\phi,x^2\phi\rangle_{\alpha}=-2(\alpha+1)\lVert \phi\rVert_{2}^2.
\end{equation*}
Using these relations, we deduce:
\begin{equation*}
\begin{aligned}
&\lVert x^2\phi\rVert_{2}^2+(2\alpha+1)^2\left\lVert\frac{1}{x}\frac{d}{dx}\phi\right\rVert_{2}^2+\left\lVert \frac{d^2}{dx^2}\phi\right\rVert_{2}^2+2\left\lVert x\frac{d}{dx}\phi\right\rVert_{2}^2\\
&=\lVert L_{\alpha}\phi\rVert_{2}^2+8(\alpha+1)^2\lVert \phi\rVert_{2}^2.
\end{aligned}
\end{equation*}
Finally, applying the estimate from Lemma \ref{L3.1}:
\begin{equation*}
2(\alpha+1)\lVert\phi\rVert_{2}\leq \lVert L_{\alpha}\phi\rVert_{2},
\end{equation*}
we arrive at the desired inequality:
\begin{equation*}
\lVert x^2\phi\rVert_{2}^2+(2\alpha+1)^2\left\lVert\frac{1}{x}\frac{d}{dx}\phi\right\rVert_{2}^2+\left\lVert \frac{d^2}{dx^2}\phi\right\rVert_{2}^2+2\left\lVert x\frac{d}{dx}\phi\right\rVert_{2}^2\leq 3\lVert L_{\alpha}\phi\rVert_{2}^2.
\end{equation*}
Thus, the proof is complete.
\end{proof}
\begin{lemma}\label{L4}
Let $d=1$. Then for $k\in \mathbb{N}$, there exist constants $C_k$, $D_k>0$ such that for all  $\phi\in \mathscr{S}(\mathbb{R}_{+})$
\begin{equation}\label{L3.13}
\lVert x^{2k}\phi\rVert_{2}\leq C_k \lVert L_{\alpha}^k\phi\rVert_{2}\quad and \quad \lVert L_{\alpha} (x^{2(k-1)}\phi)\rVert_{2}\leq D_k \lVert L_{\alpha}^k\phi\rVert_{2}.
\end{equation}
\end{lemma}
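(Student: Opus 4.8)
The plan is to run an induction that keeps track simultaneously of multiplication by powers of $x^{2}$ and of the Euler-type operator $\delta:=x\,\frac{d}{dx}$, because the commutators of $L_{\alpha}$ with these two operations feed into one another. A direct differentiation, starting from $L_{\alpha}=-\frac{d^{2}}{dx^{2}}+\frac{2\alpha+1}{x}\frac{d}{dx}+x^{2}$, gives the pointwise identities
\begin{equation*}
L_{\alpha}(x^{2}\phi)=x^{2}L_{\alpha}\phi+4\alpha\phi-4\delta\phi,\qquad L_{\alpha}(\delta\phi)=\delta(L_{\alpha}\phi)+2L_{\alpha}\phi-4x^{2}\phi
\end{equation*}
for every $\phi\in\mathscr{S}(\mathbb{R}_{+})$; equivalently $[L_{\alpha},x^{2}]=4\alpha-4\delta$ and $[L_{\alpha},\delta]=2L_{\alpha}-4x^{2}$. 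Since multiplication by $x^{2}$, the operator $\delta$ and $L_{\alpha}$ all map $\mathscr{S}(\mathbb{R}_{+})$ into itself (readily checked from the definition of $\mathscr{S}(\mathbb{R}_{+})$ via $x^{-1}D_{x}$ and $x^{2}$), the iterations below are legitimate. The two tools invoked throughout are Lemma~\ref{L3.2}, which supplies the base estimates $\|x^{2}\phi\|_{2}\le\sqrt{3}\,\|L_{\alpha}\phi\|_{2}$ and $\|\delta\phi\|_{2}=\|x\phi'\|_{2}\le\sqrt{3/2}\,\|L_{\alpha}\phi\|_{2}$, and Lemma~\ref{L3.1}, which for $d=1$ reads $2(\alpha+1)\|L_{\alpha}^{\ell}\phi\|_{2}\le\|L_{\alpha}^{\ell+1}\phi\|_{2}$ and will be used to absorb every lower-order power of $L_{\alpha}$ that appears.

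\emph{Step 1 (paired induction).} I will prove, simultaneously by induction on $j\ge0$, that there are constants $\kappa_{j},\kappa_{j}'$ depending only on $\alpha$ and $j$ with
\begin{equation*}
(Q_{j}):\quad \|L_{\alpha}^{j}(x^{2}\phi)\|_{2}\le\kappa_{j}\|L_{\alpha}^{j+1}\phi\|_{2},\qquad (R_{j}):\quad \|L_{\alpha}^{j}(\delta\phi)\|_{2}\le\kappa_{j}'\|L_{\alpha}^{j+1}\phi\|_{2}
\end{equation*}
for all $\phi\in\mathscr{S}(\mathbb{R}_{+})$. The case $j=0$ is exactly the base estimate above. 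For the inductive step, apply $L_{\alpha}^{j}$ to the first commutator identity to get $L_{\alpha}^{j+1}(x^{2}\phi)=L_{\alpha}^{j}\bigl(x^{2}(L_{\alpha}\phi)\bigr)+4\alpha\,L_{\alpha}^{j}\phi-4\,L_{\alpha}^{j}(\delta\phi)$; bound the first term by $(Q_{j})$ applied to $L_{\alpha}\phi\in\mathscr{S}(\mathbb{R}_{+})$, the second by Lemma~\ref{L3.1} used twice, and the third by $(R_{j})$ followed by Lemma~\ref{L3.1} — all three are thereby bounded by a constant times $\|L_{\alpha}^{j+2}\phi\|_{2}$, which is $(Q_{j+1})$. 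Symmetrically, applying $L_{\alpha}^{j}$ to the second commutator identity gives $L_{\alpha}^{j+1}(\delta\phi)=L_{\alpha}^{j}\bigl(\delta(L_{\alpha}\phi)\bigr)+2L_{\alpha}^{j+1}\phi-4L_{\alpha}^{j}(x^{2}\phi)$, and the three terms are handled by $(R_{j})$ applied to $L_{\alpha}\phi$, by Lemma~\ref{L3.1}, and by $(Q_{j})$ followed by Lemma~\ref{L3.1} respectively; this is $(R_{j+1})$.

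\emph{Step 2 (iteration).} For $j\ge0$ and $m\ge1$, writing $x^{2m}\phi=x^{2}(x^{2(m-1)}\phi)$ and applying $(Q_{j})$ to $x^{2(m-1)}\phi\in\mathscr{S}(\mathbb{R}_{+})$, then $(Q_{j+1})$ to $x^{2(m-2)}\phi$, and so on,
\begin{equation*}
\|L_{\alpha}^{j}(x^{2m}\phi)\|_{2}\le\kappa_{j}\|L_{\alpha}^{j+1}(x^{2(m-1)}\phi)\|_{2}\le\cdots\le\Bigl(\prod_{i=j}^{j+m-1}\kappa_{i}\Bigr)\|L_{\alpha}^{j+m}\phi\|_{2}.
\end{equation*}
Taking $j=0$, $m=k$ gives the first inequality of \eqref{L3.13} with $C_{k}=\prod_{i=0}^{k-1}\kappa_{i}$; taking $j=1$, $m=k-1$ gives $\|L_{\alpha}(x^{2(k-1)}\phi)\|_{2}\le D_{k}\|L_{\alpha}^{k}\phi\|_{2}$ with $D_{k}=\prod_{i=1}^{k-1}\kappa_{i}$ (understood as $D_{1}=1$, which also matches the trivial case $k=1$, where the second estimate is merely $\|L_{\alpha}\phi\|_{2}\le\|L_{\alpha}\phi\|_{2}$). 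This proves the lemma.

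The only genuinely delicate point is Step~1: one has to observe that $\{x^{2},\delta,L_{\alpha}\}$ is closed under commutation with $L_{\alpha}$ modulo terms that Lemma~\ref{L3.1} renders harmless, so that multiplication by $x^{2}$ cannot be estimated on its own — it must be propagated through the induction together with $\delta$. Everything else, namely the two commutator computations, the absorption of lower powers of $L_{\alpha}$, and the telescoping in Step~2, is routine bookkeeping resting only on the integration-by-parts identities already recorded in the proof of Lemma~\ref{L3.2} and on the spectral gap from Lemma~\ref{L3.1}.
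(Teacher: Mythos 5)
Your proof is correct, and while it rests on the same two ingredients as the paper's (the base estimates of Lemma \ref{L3.2} and the spectral gap of Lemma \ref{L3.1}), the inductive scheme is genuinely different. The paper proves \eqref{L3.13} by induction on $k$ directly: it expands $L_{\alpha}(x^{2(k-1)}\phi)$ into the three terms $x^{2(k-2)}\phi$, $x\frac{d}{dx}(x^{2(k-2)}\phi)$ and $x^{2(k-1)}L_{\alpha}\phi$, controls the middle one through Lemma \ref{L3.2} combined with the inductive bound on $L_{\alpha}(x^{2(k-2)}\phi)$, and absorbs lower powers of $L_{\alpha}$ by Lemma \ref{L3.1}, producing $C_k$, $D_k$ recursively. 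You instead induct on the power $j$ of $L_{\alpha}$, carrying the pair $\{x^{2},\delta\}$ with $\delta=x\frac{d}{dx}$, exploiting that the commutators $[L_{\alpha},x^{2}]=4\alpha-4\delta$ and $[L_{\alpha},\delta]=2L_{\alpha}-4x^{2}$ close up modulo terms that Lemma \ref{L3.1} absorbs; both inequalities of \eqref{L3.13} then fall out of the single telescoped family $\lVert L_{\alpha}^{j}(x^{2m}\phi)\rVert_{2}\leq C_{j,m}\lVert L_{\alpha}^{j+m}\phi\rVert_{2}$ at $(j,m)=(0,k)$ and $(1,k-1)$. What your route buys is a more systematic statement (a two-parameter family of estimates rather than the two endpoint cases) and a transparent explanation of why $\delta$ must be propagated alongside $x^{2}$, which in the paper is hidden inside the one expansion identity; what the paper's route buys is brevity and explicit recursive constants. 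I verified your two commutator identities against the operator as written in \eqref{Laguerre} and they are exact; incidentally, the paper's displayed identity for $L_{\alpha}(x^{2(k-1)}\phi)$ carries a harmless sign slip in the $\alpha$-coefficient (for the operator \eqref{Laguerre} it should read $4(k-1)(k+\alpha-2)$ rather than $4(k-1)(k-\alpha-2)$), which affects neither argument since only absolute values enter the estimates. The only points you dispatch with a wave — that $x^{2}$, $\delta$ and $L_{\alpha}$ map $\mathscr{S}(\mathbb{R}_{+})$ into itself, and that $(R_{0})$ comes from the term $2\lVert x\frac{d}{dx}\phi\rVert_{2}^{2}$ in Lemma \ref{L3.2} — are indeed routine, and the paper's own induction uses the same facts implicitly.
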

\begin{proof}
By Lemma \ref{L3.2}, we see that the estimate \eqref{L3.13} holds with $C_1=\sqrt{3}$ and $D_1=1$ if $k=1$. We now proceed by induction to prove the inequality for $k\geq 2$. Assume that the estimate \eqref{L3.13} holds for $k-1$ with some constants $C_{k-1}$ and $D_{k-1}$. Using the following identity for $ L_{\alpha} (x^{2(k-1)}\phi)$:
\begin{equation*}
 L_{\alpha} (x^{2(k-1)}\phi)= 4(k-1)(k-\alpha-2)x^{2(k-2)}\phi-4(k-1)x\frac{d}{dx}(x^{2(k-2)}\phi)+x^{2(k-1)}L_{\alpha}\phi.
\end{equation*}
we obtain the following estimate:
\begin{equation*}
\lVert L_{\alpha} (x^{2(k-1)}\phi)\rVert_{2}\leq 4(k-1)\arrowvert(k-\alpha-2)\arrowvert\lVert x^{2(k-2)}\phi\rVert_{2}+4(k-1)\lVert x\frac{d}{dx}(x^{2(k-2)}\phi)\rVert_{2}+\lVert x^{2(k-1)}L_{\alpha}\phi\rVert_{2}.
\end{equation*}
Applying the induction hypothesis, we get:
\begin{equation*}
\begin{aligned}
 & \lVert L_{\alpha} (x^{2(k-1)}\phi)\rVert_{2}\\
 &\leq 4(k-1)\arrowvert(k-\alpha-2)\arrowvert C_{k-2}\lVert L_{\alpha}^{k-2}\phi\rVert_{2}+4(k-1)C_1\lVert L_{\alpha}(x^{2(k-2)}\phi)\rVert_{2}+C_{k-1}\lVert (L_{\alpha})^{k-2}\phi\rVert_{2}\\
 &\leq 4(k-1)\arrowvert(k-\alpha-2)\arrowvert C_{k-2}\lVert L_{\alpha}^{k-2}\phi\rVert_{2}+4(k-1)C_1D_{k-1}\lVert L_{\alpha}^{k-1}\phi\rVert_{2}+C_{k-1}\lVert L_{\alpha}^{k-2}\phi\rVert_{2}.
 \end{aligned}
\end{equation*}
Combining this with Lemma \ref{L3.1} we obtain
\begin{equation*}
\lVert L_{\alpha} (x^{2(k-1)}\phi)\rVert_{2}\leq D_k\lVert L_{\alpha}^k\phi\rVert_{2}
\end{equation*}
with 
$$D_k=4(k-1)\arrowvert(k-\alpha-2)\arrowvert C_{k-2}2^{-2}(\alpha+1)^{-2}+4(k-1)C_1D_{k-1}2^{-1}(\alpha+1)^{-1}+C_{k-1}2^{-2}(\alpha+1)^{-2}.$$
On the other hand, by Lemma \ref{L3.2}, we also have
\begin{equation*}
\lVert x^{2k}\phi\rVert_{2}=\lVert x^2x^{2(k-1)}\phi\rVert_{2}\leq C_1\lVert  L_{\alpha} (x^{2(k-1)}\phi)\rVert_{2}\leq C_k\lVert L_{\alpha}^k\phi\rVert_{2}
\end{equation*}
with $C_k=C_1D_k$, $k\geq2$. This completes the proof. 
\end{proof}
Now, the weighted estimate \eqref{weighted1.1} we aim to prove, is a conclusion of above Lemma \ref{L4} and the L\"{o}wner-Heinz inequality (see, e.g., \cite[Section I.5]{CH}). We omit its proof here and refer the reader to \cite[p.10]{CD} for a detailed discussion.
\subsection{Trace lemma for the Laguerre operators}
In this subsection, we establish the following trace lemma, which will be used to prove the square function estimate (Proposition \ref{SP} below).
\begin{lemma}\label{Trace}
For $\beta>1$, there exists a constant $C>0$ such that the estimate
\begin{equation}\label{Trace1.1}
\lVert P_nf\rVert_{L^2(\mathbb{R}_{+}^{d},d\mu_{\alpha}(x))\rightarrow L^2(\mathbb{R}_{+}^{d}, (1+\arrowvert x\arrowvert)^{-\beta}d\mu_{\alpha}(x))}\leq Cn^{-\frac{1}{4}}
\end{equation}
holds for every $n\in \mathbb{N}$. 
\end{lemma}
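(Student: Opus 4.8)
The plan is to reduce \eqref{Trace1.1} to a localized spectral‑cluster estimate and then establish the latter by induction on the dimension $d$.

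\emph{Reduction.} Since $P_n$ is an orthogonal projection,
\begin{equation*}
\lVert P_n\rVert_{L^2(\mathbb{R}_+^d,d\mu_\alpha)\to L^2(\mathbb{R}_+^d,(1+|x|)^{-\beta}d\mu_\alpha)}^2=\sup_{\lVert f\rVert_2=1}\int_{\mathbb{R}_+^d}|P_nf(x)|^2(1+|x|)^{-\beta}\,d\mu_\alpha(x),
\end{equation*}
so it suffices to prove $\int_{\mathbb{R}_+^d}|P_nf|^2(1+|x|)^{-\beta}\,d\mu_\alpha\leq C n^{-1/2}\lVert f\rVert_2^2$. Let $\eta_R^{(d)}$ denote the indicator function of the Euclidean ball $\{x\in\mathbb{R}_+^d:|x|\leq R\}$, so $\eta_R^{(1)}=\mathbf 1_{[0,R]}$. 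Splitting $(1+|x|)^{-\beta}$ over the dyadic annuli $2^{j-1}<1+|x|\leq 2^j$ gives $(1+|x|)^{-\beta}\lesssim\sum_{j\geq 0}2^{-j\beta}\eta_{2^j}^{(d)}$, hence the integral is $\lesssim\sum_{j\geq 0}2^{-j\beta}\lVert \eta_{2^j}^{(d)}P_nf\rVert_2^2$. For $2^j\geq\sqrt n$ we use only $\lVert\eta_{2^j}^{(d)}P_nf\rVert_2\leq\lVert f\rVert_2$, contributing $\lesssim\lVert f\rVert_2^2\sum_{2^j\geq\sqrt n}2^{-j\beta}\lesssim n^{-\beta/2}\lVert f\rVert_2^2\leq n^{-1/2}\lVert f\rVert_2^2$ since $\beta>1$. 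The essential ingredient is thus
\begin{equation}\label{KeyTrace}
\lVert \eta_R^{(d)} P_n\rVert_{L^2(d\mu_\alpha)\to L^2(d\mu_\alpha)}^2=\lVert \eta_R^{(d)} P_n\eta_R^{(d)}\rVert_{L^2\to L^2}\leq C\,\frac{R}{\sqrt n},\qquad 1\leq R\leq\sqrt n ;
\end{equation}
granting \eqref{KeyTrace}, the annuli with $2^j\leq\sqrt n$ contribute $\lesssim\lVert f\rVert_2^2\,n^{-1/2}\sum_{j\geq 0}2^{j(1-\beta)}\lesssim n^{-1/2}\lVert f\rVert_2^2$, again because $\beta>1$, and the lemma follows.

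\emph{Proof of \eqref{KeyTrace}.} I would argue by induction on $d$, using the tensor decomposition $L^2(\mathbb{R}_+^d,d\mu_\alpha)=L^2(\mathbb{R}_+,t^{2\alpha_1+1}dt)\otimes L^2(\mathbb{R}_+^{d-1},d\mu_{\alpha'})$, $\alpha'=(\alpha_2,\dots,\alpha_d)$, under which $P_n^{(d)}=\sum_{m=0}^{n}p_m\otimes P_{n-m}^{(d-1)}$ with $p_m$ the rank‑one projection onto $\varphi_m^{\alpha_1}$. One may assume $R^2\leq n/2$, since otherwise $\lVert\eta_R^{(d)}P_n\eta_R^{(d)}\rVert\leq\lVert P_n\rVert=1\leq\sqrt2\,R/\sqrt n$. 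As $\{|x|\leq R\}\subseteq\{t\leq R\}\times\{|x'|\leq R\}$ we have $\lVert\eta_R^{(d)}P_n^{(d)}\eta_R^{(d)}\rVert\leq\lVert\widetilde\eta_R P_n^{(d)}\widetilde\eta_R\rVert$ with $\widetilde\eta_R=\eta_R^{(1)}\otimes\eta_R^{(d-1)}$, and then $\widetilde\eta_R P_n^{(d)}\widetilde\eta_R=\sum_{m=0}^{n}(\eta_R^{(1)}p_m\eta_R^{(1)})\otimes(\eta_R^{(d-1)}P_{n-m}^{(d-1)}\eta_R^{(d-1)})$ is a sum of positive operators, which I split at $m=n/2$. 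For $m\leq n/2$ we have $n-m\geq n/2\geq R^2$, so the induction hypothesis gives $\eta_R^{(d-1)}P_{n-m}^{(d-1)}\eta_R^{(d-1)}\leq C(R/\sqrt n)I$; using the operator inequality $A\otimes B\leq\lVert B\rVert\,(A\otimes I)$ for $A,B\geq 0$ and $\sum_{m\leq n/2}\eta_R^{(1)}p_m\eta_R^{(1)}=\eta_R^{(1)}\Pi_{\leq n/2}\eta_R^{(1)}$, a compression of a projection and hence of norm $\leq 1$, this part has norm $\lesssim R/\sqrt n$. For $m>n/2$ one instead uses $\lVert\eta_R^{(1)}p_m\eta_R^{(1)}\rVert=\int_0^R|\varphi_m^{\alpha_1}(t)|^2t^{2\alpha_1+1}dt$ together with the base case below (applicable since $R\leq\sqrt{n/2}<\sqrt m$) to gain a factor $\lesssim R/\sqrt n$, bounds $\eta_R^{(d-1)}P_{n-m}^{(d-1)}\eta_R^{(d-1)}\leq I$ and applies $A\otimes B\leq\lVert A\rVert\,(I\otimes B)$, and sums as before (the compression of $\sum_{m>n/2}P_{n-m}^{(d-1)}$ has norm $\leq 1$), again obtaining $\lesssim R/\sqrt n$. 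This proves \eqref{KeyTrace}. The base case $d=1$ is the one‑dimensional estimate $\int_0^R|\varphi_m^{\alpha}(t)|^2t^{2\alpha+1}dt\leq C R/\sqrt m$ for $1\leq R\leq\sqrt m$, which follows from the classical pointwise bounds for Laguerre functions (see, e.g., \cite{TS3}): with $\nu=4m+2\alpha+2\asymp m$ one has $|\varphi_m^{\alpha}(t)|^2 t^{2\alpha+1}\lesssim m^{-1/2}$ for $\nu^{-1/2}\leq t\leq \nu^{1/2}/2$ and $\int_0^{\nu^{-1/2}}|\varphi_m^{\alpha}(t)|^2 t^{2\alpha+1}dt\lesssim m^{-1}$, whence the claim since $1\leq R\leq\sqrt m\leq\nu^{1/2}/2$.

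\emph{The main obstacle} is precisely \eqref{KeyTrace}. The crude Hilbert--Schmidt bound $\lVert\eta_R^{(d)}P_n\eta_R^{(d)}\rVert\leq\lVert\eta_R^{(d)}P_n\rVert_{HS}^2=\sum_{|\mu|_1=n}\int_{|x|\leq R}|\varphi_\mu^{\alpha}|^2\,d\mu_\alpha$ only yields $\lesssim n^{d-1}R/\sqrt n$, which loses a factor $n^{d-1}$ when $d\geq 2$ and is useless; one must genuinely exploit the orthogonality among the $\asymp n^{d-1}$ eigenfunctions spanning the $e_n$‑eigenspace. The device that makes the dimensional induction close is to propagate the \emph{operator} (not Hilbert--Schmidt) bound through the tensor factors by means of $A\otimes B\leq\lVert B\rVert\,(A\otimes I)$ (resp. $\leq\lVert A\rVert\,(I\otimes B)$) together with the trivial norm bound for compressions of the projections $\Pi_{\leq n/2}$; the hypothesis $\beta>1$ enters only to make the two geometric series over the dyadic balls $\{|x|\leq 2^j\}$ converge.
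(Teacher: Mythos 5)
Your proposal is correct, and while its outer shell coincides with the paper's, the core of the argument is organized differently. The reduction is identical: both you and the paper pass to the key local estimate (your \eqref{KeyTrace}; the paper's version is \eqref{Proof1.1}, stated for cubes $[0,M]^d$ and all $M\geq 1$) and then sum over dyadic regions, with $\beta>1$ entering only to make the geometric series converge. For the key estimate itself, the paper decomposes $f=\sum_{i=1}^{d}f_i$, where the Laguerre coefficients of $f_i$ are supported on multiindices with $\mu_i\geq \lvert\mu\rvert_1/d$, enlarges the domain of integration to all of $\mathbb{R}_{+}$ in the remaining $d-1$ variables, and uses exact orthogonality there to diagonalize the cross terms, reducing matters to the one-dimensional bound $\int_0^{M}(\varphi_m^{\alpha_1})^2\,d\mu_{\alpha_1}\lesssim M m^{-1/2}$ with $m\gtrsim n$. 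You instead run an induction on the dimension through the tensor decomposition $P_n^{(d)}=\sum_m p_m\otimes P_{n-m}^{(d-1)}$, splitting at $m=n/2$ and propagating operator-norm bounds via $A\otimes B\leq \lVert B\rVert (A\otimes I)$, $A\otimes B\leq \lVert A\rVert (I\otimes B)$ together with the trivial norm bound for compressions of spectral projections. The two mechanisms are close in spirit: your split at $n/2$, unwound through the induction, plays the role of the paper's observation that some coordinate index is at least $n/d$, and the compressed-projection bound replaces the paper's domain enlargement plus orthogonality; both ultimately rest on the same pointwise Laguerre asymptotics (the paper's Lemma \ref{L}, via $(\varphi_m^{\alpha})^2 x^{2\alpha+1}=2x\,\mathscr{L}_m^{\alpha}(x^2)^2$). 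What your version buys is a cleaner, purely operator-theoretic bookkeeping (no cross terms $c(\mu)\overline{c(\nu)}$, uniformity in $R$ stated as a norm bound), at the modest cost of the restriction $R\leq\sqrt{n}$, which you correctly dispose of by the trivial bound; all the individual ingredients you invoke (positivity of the compressions, $\lVert\eta_R p_m\eta_R\rVert=\int_0^R(\varphi_m^{\alpha_1})^2 d\mu_{\alpha_1}$, and the base case including the integrable singularity at $0$ for $-1<\alpha_1<0$) check out.
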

Let $\mathscr{L}_{n}^{a}$ denote the normalized Laguerre function of type $a$ which is defined by
\begin{equation}\label{Define}
\mathscr{L}_{n}^{\alpha}(x)=\left(\frac{\Gamma(n+1)}{\Gamma(n+\alpha+1)}\right)^{\frac{1}{2}}e^{-\frac{x}{2}}x^{\frac{\alpha}{2}}L_{n}^{\alpha}(x).
\end{equation}
Then functions $\{\mathscr{L}_{n}^{\alpha}(x)\}$ form an orthonormal family in $L^2([0,\infty),dx)$.
To prove the trace lemma, we use the following result.
\begin{lemma}\label{L}\cite[Theorem 1.5.3]{TS3}. Let $\nu=4n+2a+2$ and $a>-1$.
\begin{equation}\label{L1.1}
\arrowvert\mathscr{L}_{n}^{a}(x)\arrowvert\leq C\left\{\begin{array}{lr}
(x\nu)^{a/2},\qquad\qquad\qquad\quad\quad\qquad\,\,\,\, 0\leq x\leq 1/\nu,\\
(x\nu)^{-1/4}, \qquad\qquad\quad\qquad\quad\qquad\,\, 1/\nu\leq x\leq \nu/2,\\
\nu^{-1/4}(\nu^{1/3}+\arrowvert\nu-x\arrowvert)^{-1/4},\qquad\quad\quad\nu/2\leq x\leq 3\nu/2,\\
e^{-\gamma x},\qquad\qquad\qquad\qquad\qquad\qquad\, x\geq 3\nu/2,
\end{array}
\right.
\end{equation}
where $\gamma>0$ is a constant. Moreover, if $1\leq x\leq\nu-\nu^{1/3}$, we have 
\begin{equation}\label{L1.2}
\mathscr{L}_{n}^{a}(x)=(2/\pi)^{\frac{1}{2}}(-1)^{n}x^{-\frac{1}{4}}(\nu-x)^{-\frac{1}{4}}\cos\left(\frac{\nu(2\theta-\sin 2\theta)-\pi}{4}\right)+O\left(\frac{\nu^{\frac{1}{4}}}{(\nu-x)^{\frac{7}{4}}}+(x\nu)^{-\frac{3}{4}}\right),
\end{equation}
where $\theta=\arccos(x^{1/2}\nu^{-1/2})$.
\end{lemma}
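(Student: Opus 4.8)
The plan is to reduce the statement to the analysis of the second-order ordinary differential equation satisfied by $\mathscr{L}_n^a$ and then run the classical Liouville--Green (WKB) method regime by regime. Starting from the Laguerre equation $xy''+(a+1-x)y'+ny=0$ for $y=L_n^a$ and the substitution implicit in \eqref{Define}, a direct computation shows that $w(x):=x^{1/2}\mathscr{L}_n^a(x)$ solves the normal-form equation
\begin{equation*}
w''(x)+Q(x)\,w(x)=0,\qquad Q(x)=-\frac14+\frac{\nu}{4x}+\frac{1-a^2}{4x^2},\qquad \nu=4n+2a+2 .
\end{equation*}
The zeros of $Q$ are a small root near the origin and a dominant turning point at $x=\nu+O(1/\nu)$, so $Q>0$ (oscillatory regime) for $0<x\lesssim\nu$ and $Q<0$ (exponentially damped regime) for $x\gtrsim\nu$. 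This partition of the half-line is precisely the source of the four regimes in \eqref{L1.1}, and since $\mathscr{L}_n^a=x^{-1/2}w$ lies in $L^2((0,\infty),dx)$ by \eqref{Define}, it is the recessive (decaying) solution past the turning point.

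In the interior oscillatory region I would apply the Liouville--Green approximation with Olver's explicit error-control function for $Q$, giving
\begin{equation*}
w(x)=C\,Q(x)^{-1/4}\cos\!\left(\int_x^{\nu}\sqrt{Q(s)}\,ds+\varphi_0\right)+(\text{remainder}).
\end{equation*}
Using $Q(s)\approx(\nu-s)/(4s)$ and the substitution $s=\nu\cos^2\phi$, the phase integral evaluates in closed form to $\tfrac{\nu}{4}(2\theta-\sin 2\theta)$ with $\theta=\arccos(x^{1/2}\nu^{-1/2})$, while $x^{-1/2}Q(x)^{-1/4}$ reproduces the amplitude $x^{-1/4}(\nu-x)^{-1/4}$. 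This already yields the shape of \eqref{L1.2}; the phase shift $\varphi_0=-\pi/4$ is the universal loss incurred at the turning point through the Airy connection formula, and the prefactor $(2/\pi)^{1/2}$ together with the sign $(-1)^n$ are then pinned down by the normalization in \eqref{Define} (equivalently, by matching against the known Bessel behaviour of $L_n^a$ near the origin). Tracking the Liouville--Green remainder in this regime produces precisely the error term $O(\nu^{1/4}(\nu-x)^{-7/4}+(x\nu)^{-3/4})$.

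The four pointwise bounds in \eqref{L1.1} then follow by reading off the amplitude in each regime. For $0\le x\le 1/\nu$ the $x^{-2}$ term in $Q$ dominates, the equation is Bessel-like of order $a$, so $\mathscr{L}_n^a(x)\sim c\,x^{a/2}$; combining $L_n^a(0)=\Gamma(n+a+1)/(\Gamma(n+1)\Gamma(a+1))$ with $(\Gamma(n+1)/\Gamma(n+a+1))^{1/2}\asymp\nu^{-a/2}$ gives the bound $(x\nu)^{a/2}$. For $1/\nu\le x\le\nu/2$ one has $\nu-x\asymp\nu$, so the oscillatory amplitude $x^{-1/4}(\nu-x)^{-1/4}$ collapses to $(x\nu)^{-1/4}$. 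Near the turning point, for $\nu/2\le x\le 3\nu/2$, the Liouville--Green form degenerates and must be replaced by the uniform Airy (Langer) approximation, whose amplitude yields $\nu^{-1/4}(\nu^{1/3}+|\nu-x|)^{-1/4}$, interpolating between the value $\nu^{-1/3}$ at $x=\nu$ and the WKB amplitude once $|\nu-x|\gg\nu^{1/3}$. Finally, for $x\ge 3\nu/2$ the recessive solution of $w''+Qw=0$ with $Q\le-c<0$ decays like $\exp(-\int\sqrt{-Q})\lesssim e^{-\gamma x}$, giving the exponential bound.

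The main technical obstacle is the uniform control and matching of these approximations across the two transition zones, namely the Bessel-to-oscillatory crossover near $x\asymp 1/\nu$ and the oscillatory-to-Airy crossover near $x\asymp\nu$: one must check that the Liouville--Green remainders stay subordinate to the leading amplitude uniformly in $n$, and that the constants $(2/\pi)^{1/2}$, $-\pi/4$, and $(-1)^n$ are consistent across all patches. A cleaner but equally effective route, avoiding the error analysis from scratch, is to invoke the classical Hilb-type formula (the comparison of $e^{-x/2}x^{a/2}L_n^a(x)$ with a Bessel function of order $a$ and argument $\sqrt{\nu x}$) for small and moderate $x$, together with the Plancherel--Rotach asymptotics near and beyond the turning point; these are exactly the ingredients underlying \cite[Theorem 1.5.3]{TS3}, which one may simply quote.
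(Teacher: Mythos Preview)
The paper does not prove this lemma at all: it is stated with the attribution \cite[Theorem~1.5.3]{TS3} and used as a black box. Your sketch of the Liouville--Green/WKB analysis (normal form $w''+Qw=0$, phase integral via $s=\nu\cos^2\phi$, Airy transition at the turning point, Bessel behaviour near the origin) is the correct classical route and is exactly what underlies the cited theorem in Thangavelu's lectures; indeed you acknowledge this yourself in the final sentence. So your proposal is correct and strictly more detailed than what the paper provides, but for the purposes of matching the paper the appropriate ``proof'' here is simply to cite \cite[Theorem~1.5.3]{TS3}.
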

Now, we proceed to prove Lemma \ref{Trace}. The approach follows a similar argument to the one used in \cite{CD} for proving the trace lemma for the Hermite operator.
\begin{proof}
We begin with an local $L^{2}$ estimate. It suffices to prove equation \eqref{Trace1.1} by showing the following estimate:
\begin{equation}\label{Proof1.1}
\int_{[0,M]^d}\arrowvert P_nf(x)\arrowvert^2d\mu_{\alpha}(x)\leq CMn^{-\frac{1}{2}}\lVert f\rVert_{L^2(\mathbb{R}_{+}^{d},d\mu_{\alpha}(x))}^2
\end{equation}
for every $M\geq1$. Once this is established, equation \eqref{Trace1.1} follows immediately by decomposing $\mathbb{R}_{+}^{d}$ into dyadic shells and applying 
\eqref{Proof1.1} to each of them, since $\beta>1$.
To prove \eqref{Proof1.1}, we decomposes the Laguerre expansion of $f$ as follows:
\begin{equation*}\label{Proof1.2}
f=\sum_{i=1}^{d}f_i(x),
\end{equation*}
where the functions $f_1$,$\cdot\cdot\cdot$,$f_d$ are orthogonal to each other, and for each $1\leq i\leq d$, $\mu_i\geq \frac{\arrowvert\mu\arrowvert_1}{d}$ whenever $\langle f_i,\varphi_\mu^{\alpha}\rangle\neq0$ (see \cite{CD}). Note that $\mu_i\sim\arrowvert\mu\arrowvert_1$, so in order to show \eqref{Proof1.1}, it suffices to show
\begin{equation}\label{Proof1.3}
\int_{[0,M]^d}\arrowvert P_nf_i(x)\arrowvert^2d\mu_{\alpha}(x)\leq CMk^{-\frac{1}{2}}\sum_{\arrowvert \mu\arrowvert_1=n}\mu_i^{-\frac{1}{2}}\arrowvert\langle f_i,\varphi_\mu^{\alpha}\rangle_{\alpha}\arrowvert^2
\end{equation}
for each $i=1,...,d$ and $M>0$. By symmetry, it is enough to prove \eqref{Proof1.3} for $i=1$. Setting $c(\mu)=\langle f_i,\varphi_\mu^{\alpha}\rangle_{\alpha}$, we obtain
\begin{equation*}
\arrowvert P_nf_i(x)\arrowvert^2=\sum_{\arrowvert\mu\arrowvert_1=n}\sum_{\arrowvert\nu\arrowvert_1=n}c(\mu)\overline{c(\nu)}\prod_{i=1}^{d}\varphi_{\mu_i}^{\alpha_i}(x_i)\varphi_{\nu_i}^{\alpha_i}(x_i).
\end{equation*}
Using this, by Fubini's theorem it follows that
\begin{equation*}
\int_{[0,M]^d}\arrowvert P_nf_i(x)\arrowvert^2d\mu_{\alpha}(x)\leq\sum_{\arrowvert\mu\arrowvert_1=n}\sum_{\arrowvert\nu\arrowvert_1=n}c(\mu)\overline{c(\nu)}\int_0^{M}\varphi_{\mu_1}^{\alpha_1}(x_1)\varphi_{\nu_1}^{\alpha_1}(x_1)d\mu_{\alpha_i}(x_1)\prod_{i=2}^{d}\langle\varphi_{\mu_i}^{\alpha_i}(x_i),\varphi_{\nu_i}^{\alpha_i}(x_i)\rangle_{\alpha_i}.
\end{equation*}
Since $\varphi_{\mu_i}^{\alpha_i}$ are orthogonal to each other in $L^{2}([0,\infty),d\mu_{\alpha_i})$. We have $\mu_i=\nu_i$ for $i=2,\cdot\cdot\cdot,d$ whenever $\langle\varphi_{\mu_i}^{\alpha_i}(x_i),\varphi_{\nu_i}^{\alpha_i}(x_i)\rangle\neq0$ and this yields $\mu_1=\nu_1$ since $\arrowvert\mu\arrowvert_1=\arrowvert\nu\arrowvert_1=n$. Thus we obtain
\begin{equation*}
\int_{[0,M]^d}\arrowvert P_nf_i(x)\arrowvert^2d\mu_{\alpha}(x)\leq\sum_{\arrowvert\mu\arrowvert_1=n}\arrowvert c(\mu)\arrowvert^2\int_0^{M}\varphi_{\mu_1}^{\alpha_1}(x_1)\varphi_{\nu_1}^{\alpha_1}(x_1)d\mu_{\alpha_1}(x_1).
\end{equation*}
Now, to get the desire eatimate, it suffices to show that
\begin{equation*}
\int_0^{M}\left(\varphi_{\mu_1}^{\alpha_1}(x)\right)^2d\mu_{\alpha_1}(x)=\int_0^{M}\left(\varphi_{\mu_1}^{\alpha_1}(x)\right)^2x^{2\alpha_1+1}dx\leq CM\mu_1^{-1/2}.
\end{equation*}
If $\mu_1<M^2$, the estimate is trivial because $\lVert\varphi_{\mu_1}^{\alpha_1}(x)\rVert_{L^2([0,\infty),d\mu_{\alpha_1}(x))}=1$. While $\mu_1>M^2$, first by \eqref{Define} we observe that  $\left(\varphi_{\mu_1}^{\alpha_1}(x)\right)^2x^{2\alpha_1+1}=\left(\sqrt{2x}\mathscr{L}_{\mu_1}^{\alpha_1}(x^2)\right)^2$, then by equation \eqref{L1.1} in Lemma \ref{L}, we get that there exists a constant $C>0$ such that $\left(\varphi_{\mu_1}^{\alpha_1}(x)\right)^2x^{2\alpha_1+1}\leq C\mu_1^{-1/2}$ provided that $x\in[0,M]$ and $\mu_1>M^2$. Thus, we have completed the proof of Lemma \ref{Trace}.
\end{proof}
In order to make the estimate \eqref{Trace1.1} useful in proving  square function estimate, we need an extended version of \eqref{Trace1.1}. For any function $F$ with support in $[0,1]$ and $2\leq q<\infty$, we define (see \cite[(2.6)]{CD})
\begin{equation*}
\lVert F\rVert_{N^2,q}:=\left(\frac{1}{N^2}\sum_{i=1}^{N^2}\sup_{\xi\in[\frac{i-1}{N^2},\frac{i}{N^2})}\arrowvert F(x)\arrowvert ^{q}\right)^{1/q},\quad\quad N\in \mathbb{N}.
\end{equation*}
By using the same argument as in \cite{CD}, we obtain the following extended result.
\begin{lemma}\label{ETrace}
For $N\in \mathbb{N}$, let $F$ be a function supported in $[N/4,N]$ and for all $f\in L^2(\mathbb{R}_{+}^d,d\mu_{\alpha}(x))$. 
If $\beta>1$, then there exists a constant $C_\beta$ independent of $F$ and $f$ such that
\begin{equation}\label{ETrace1}
\int_{\mathbb{R}_{+}^{d}}\arrowvert F(\sqrt{L_{\alpha}})f(x)\arrowvert^2(1+\arrowvert x\arrowvert)^{-\beta}d\mu_{\alpha}(x)\leq C N\lVert \delta_NF\rVert_{N^2,2}^2\int_{\mathbb{R}_{+}^{d}}\arrowvert f(x)\arrowvert ^2d\mu_{\alpha}(x).
\end{equation}
If $0<\beta\leq1$, then for every $\varepsilon>0$, there exists a constant $C_{\beta,\varepsilon}$ independent of $F$ and $f$ such that
\begin{equation}\label{ETrace2}
\int_{\mathbb{R}_{+}^{d}}\arrowvert F(\sqrt{L_{\alpha}})f(x)\arrowvert^2(1+\arrowvert x\arrowvert)^{-\beta}d\mu_{\alpha}(x)\leq C_{\beta,\varepsilon} N^{\frac{\beta}{1+\varepsilon}}\lVert \delta_NF\rVert_{N^2,2\beta^{-1}(1+\varepsilon)}^2\int_{\mathbb{R}_{+}^{d}}\arrowvert f(x)\arrowvert ^2d\mu_{\alpha}(x).
\end{equation}
Where $\delta_{N}F(x)$ is defined by $F(Nx)$.
\end{lemma}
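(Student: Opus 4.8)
The plan is to deduce both \eqref{ETrace1} and \eqref{ETrace2} from the single-projection bound of Lemma \ref{Trace}, exploiting that the portion of the spectrum of $\sqrt{L_{\alpha}}$ meeting the support of $F$ is very sparse, and then to reach the range $0<\beta\le 1$ by interpolating against one auxiliary weight. First I would localize in the spectrum. Since $F$ is supported in $[N/4,N]$, the expansion $F(\sqrt{L_{\alpha}})f=\sum_n F(\sqrt{e_n})P_nf$ involves only indices $n$ with $\sqrt{e_n}=\sqrt{4n+2\arrowvert\alpha\arrowvert_1+2d}\in[N/4,N]$, so $n\sim N^2$. Because $\sqrt{e_{n+1}}-\sqrt{e_n}=4(\sqrt{e_{n+1}}+\sqrt{e_n})^{-1}\ge 2/N$ whenever both values lie in $[N/4,N]$, each interval $[(i-1)/N,i/N)$ with $1\le i\le N^2$ contains at most one such $\sqrt{e_n}$; write $n_i$ for the corresponding index when it exists. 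Setting $a_i:=\sup_{\xi\in[(i-1)/N^2,i/N^2)}\arrowvert\delta_NF(\xi)\arrowvert$ (so $\delta_NF$ is supported in $[1/4,1]$), the identity $F(\sqrt{e_{n_i}})=\delta_NF(\sqrt{e_{n_i}}/N)$ gives $\arrowvert F(\sqrt{e_{n_i}})\arrowvert\le a_i$, and hence
\[
F(\sqrt{L_{\alpha}})f=\sum_i b_i\,P_{n_i}f,\qquad \arrowvert b_i\arrowvert\le a_i.
\]
I will use two elementary facts: the ranges of the $P_{n_i}$ are mutually orthogonal (the $n_i$ are distinct), so $\sum_i\lVert P_{n_i}f\rVert_2^2\le\lVert f\rVert_2^2$; and $\sum_{i=1}^{N^2}a_i^q=N^2\lVert\delta_NF\rVert_{N^2,q}^q$, i.e. $\lVert (a_i)\rVert_{\ell^q}=N^{2/q}\lVert\delta_NF\rVert_{N^2,q}$, for every $2\le q<\infty$.

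For $\beta>1$, Lemma \ref{Trace} applies with the weight $(1+\arrowvert x\arrowvert)^{-\beta}$ itself; combined with $P_{n_i}P_{n_i}=P_{n_i}$ and $n_i\sim N^2$ it gives $\lVert P_{n_i}f\rVert_{L^2((1+\arrowvert x\arrowvert)^{-\beta}d\mu_{\alpha})}\le CN^{-1/2}\lVert P_{n_i}f\rVert_2$. The triangle inequality, $\arrowvert b_i\arrowvert\le a_i$, Cauchy--Schwarz, and the orthogonality above then yield
\[
\lVert F(\sqrt{L_{\alpha}})f\rVert_{L^2((1+\arrowvert x\arrowvert)^{-\beta}d\mu_{\alpha})}\le CN^{-1/2}\Big(\sum_i a_i^2\Big)^{1/2}\lVert f\rVert_2=CN^{1/2}\lVert\delta_NF\rVert_{N^2,2}\lVert f\rVert_2,
\]
which is \eqref{ETrace1} upon squaring.

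For $0<\beta\le1$ the weight $(1+\arrowvert x\arrowvert)^{-\beta}$ is no longer admissible in Lemma \ref{Trace}, so instead I would fix $f$ and interpolate the linear map $T_f\colon(c_i)\mapsto\sum_i c_i\,P_{n_i}f$ between the endpoint bounds
\[
\lVert T_fc\rVert_{L^2(d\mu_{\alpha})}\le\lVert c\rVert_{\ell^\infty}\lVert f\rVert_2,\qquad \lVert T_fc\rVert_{L^2((1+\arrowvert x\arrowvert)^{-(1+\varepsilon)}d\mu_{\alpha})}\le CN^{-1/2}\lVert c\rVert_{\ell^2}\lVert f\rVert_2,
\]
the first coming from the spectral theorem and the second from the computation just performed, now with the admissible weight exponent $1+\varepsilon>1$. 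Complex interpolation --- using $[\ell^\infty,\ell^2]_\theta=\ell^{2/\theta}$ on the coefficient side (harmless, since only finitely many $c_i$ occur) and the Stein--Weiss identity $[L^2(d\mu_{\alpha}),L^2((1+\arrowvert x\arrowvert)^{-(1+\varepsilon)}d\mu_{\alpha})]_\theta=L^2((1+\arrowvert x\arrowvert)^{-(1+\varepsilon)\theta}d\mu_{\alpha})$ on the target side --- with $\theta:=\beta/(1+\varepsilon)\in(0,1)$ and $q:=2/\theta=2\beta^{-1}(1+\varepsilon)$ gives
\[
\lVert T_fc\rVert_{L^2((1+\arrowvert x\arrowvert)^{-\beta}d\mu_{\alpha})}\le CN^{-\theta/2}\lVert c\rVert_{\ell^q}\lVert f\rVert_2.
\]
Taking $c=(b_i)$ and bounding $\lVert (b_i)\rVert_{\ell^q}\le\lVert (a_i)\rVert_{\ell^q}=N^{2/q}\lVert\delta_NF\rVert_{N^2,q}$, together with $2/q-\theta/2=\theta/2=\beta/(2(1+\varepsilon))$, we obtain $\lVert F(\sqrt{L_{\alpha}})f\rVert_{L^2((1+\arrowvert x\arrowvert)^{-\beta}d\mu_{\alpha})}\le CN^{\beta/(2(1+\varepsilon))}\lVert\delta_NF\rVert_{N^2,q}\lVert f\rVert_2$, which is \eqref{ETrace2} upon squaring.

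This follows the strategy of \cite{CD} for the Hermite operator, and I do not anticipate a genuinely hard analytic step once Lemma \ref{Trace} is available; the points requiring care are organizational. The main one is the eigenvalue-spacing estimate that justifies replacing $\sum_n F(\sqrt{e_n})P_nf$ by the ``diagonal'' sum $\sum_i b_i P_{n_i}f$ --- this is precisely why the hypothesis is $\operatorname{supp}F\subset[N/4,N]$ rather than merely $\operatorname{supp}F\subset[0,N]$, and for $N=1$ or other small values the constants must be allowed to depend on $\alpha$ and $d$. The second is correctly combining the $\ell^p$-interpolation on the coefficients with the weighted $L^2$ (Stein--Weiss) interpolation on the functions and tracking the resulting power of $N$; the $\varepsilon$ appearing in \eqref{ETrace2} merely reflects that Lemma \ref{Trace} needs the weight exponent to be strictly above $1$, so $(1+\arrowvert x\arrowvert)^{-\beta}$ itself cannot serve as an endpoint when $\beta\le1$.
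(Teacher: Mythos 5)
Your proposal is correct and follows essentially the route the paper intends (via \cite{CD}): the spectral spacing of $\sqrt{e_n}$ plus the trace estimate of Lemma \ref{Trace} gives \eqref{ETrace1}, and \eqref{ETrace2} comes from interpolating against the admissible weight exponent $1+\varepsilon$. The only cosmetic difference is that you fix $f$ and interpolate linearly in the coefficient sequence (Calder\'on on $\ell^q$ together with Stein--Weiss on the weighted $L^2$ side) instead of invoking the bilinear interpolation theorem used in \cite{CD}; this is legitimate here because both endpoint bounds carry the same factor $\lVert f\rVert_{2}$, so the linearized argument yields the same conclusion.
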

Regarding the proof of this lemma, we will make two points: (1) the proof of \eqref{ETrace1} is straightforward, and (2) the conclusion of \eqref{ETrace2} follows from \eqref{ETrace1} combined with a bilinear interpolation theorem. We refer the reader to \cite[p.12-15]{CD} for a detailed discussion.
\section{Proof of sufficiency part of Theorem \ref{T1.2} }\label{Section3}
In this section, we recall the standard reduction procedure (see \cite[p.17]{CD}), which reduces Theorem \ref{T1.2} to showing a square function estimate.
\subsection{Square function estimate}
We begin by recalling that
\begin{equation}\label{SI}
S_{*}^{\lambda}(L_{\alpha})f(x)\leq C_{\lambda,\rho}\sup_{0<R<\infty}\left(\frac{1}{R}\int_0^{R}\arrowvert S_t^{\rho}(L_{\alpha})f(x)\arrowvert^2dt\right)^{1/2}
\end{equation}
provided that $\rho>-1/2$ and $\lambda>\rho +1/2$. This was shown in \cite[pp.278-279]{SW} (see also \cite[p.17]{CD}). Via a dyadic decomposition, we write $x_{+}^{\rho}=\sum_{k\in\mathbb{Z}}2^{-k\rho}\phi^{\rho}(2^kx)$ for some $\phi^{\rho}\in C_{c}^{\infty}(2^{-3},2^{-1})$. Thus
\begin{equation*}
(1-\arrowvert \xi\arrowvert^2)_{+}^{\rho}=:\phi_0^{\rho}(\xi)+\sum_{k=1}^{\infty}2^{-k\rho}\phi_{k}^{\rho}(\xi),
\end{equation*}
where $\phi_{k}^{\rho}=\phi^{\rho}(2^k(1-\arrowvert \xi\arrowvert^2)), k\geq1$. We also note that supp\,$\phi_{0}^{\rho}\subseteq\{\xi:\arrowvert\xi\arrowvert\leq 7\times 2^{-3}\}$ and supp\,$\phi_{k}^{\rho}\subseteq\{\xi:1-2^{-1-k}\leq\arrowvert\xi\arrowvert\leq 1-2^{-k-3}\}$. Using \eqref{SI}, for $\lambda>\rho+1/2$ we have
\begin{equation}\label{S1}
S_{*}^{\lambda}(L_{\alpha})f(x)\leq C\left(\sup_{0<R<\infty}\frac{1}{R}\int_0^{R}\arrowvert \phi_{0}^{\rho}(t^{-1}\sqrt{L_{\alpha}})f(x)\arrowvert^2dt\right)^{1/2}+C\sum_{k=1}^{\infty}2^{-k\rho}\left(\int_0^{\infty}\arrowvert \phi_{k}^{\rho}(t^{-1}\sqrt{L_{\alpha}})f(x)\arrowvert^2\frac{dt}{t}\right)^{1/2}.
\end{equation}
The square function $\mathfrak{S}_{\delta}$ is given by
\begin{equation*}\label{S2}
\mathfrak{S}_{\delta}f(x)=\left(\int_0^{\infty}\arrowvert \phi\left(\delta^{-1}\left(1-\frac{L_{\alpha}}{t^2}\right)\right)f(x)\arrowvert^2\frac{dt}{t}\right)^{1/2}\quad,\quad 0<\delta<\frac{1}{2},
\end{equation*}
where $\phi$ is a fixed $C^{\infty}$ function supported in $[2^{-3},2^{-1}]$ with $\arrowvert \phi\arrowvert\leq 1$.
The proof of the sufficiency part of Theorem \ref{T1.2} now reduces to proving the following proposition, which is a weighted $L^2$-estimate for the square function $\mathfrak{S}_{\delta}$.
\begin{proposition}\label{SP}
Fix $\alpha\in(-1,\infty)^d$, let $0<\delta<1/2$, $0<\varepsilon\leq1/2$, and let $0\leq\beta<2(\arrowvert \alpha\arrowvert_1+d)$. Then, there exists a constant $C>0$, independent of $f$ and $\delta$, such that
\begin{equation}\label{S3}
\int_{\mathbb{R}_{+}^{d}}\arrowvert\mathfrak{S}_{\delta}f(x)\arrowvert^2(1+\arrowvert x\arrowvert)^{-\beta}d\mu_{\alpha}(x)\leq C\delta A_{\beta,d}^{\varepsilon}(\delta)\int_{\mathbb{R}_{+}^{d}}\arrowvert f(x)\arrowvert^2(1+\arrowvert x\arrowvert)^{-\beta}d\mu_{\alpha}(x),
\end{equation}
where
\begin{equation}\label{S4}
A_{\beta,d}^{\varepsilon}(\delta):=\left\{\begin{array}{lr}
\delta^{-\varepsilon},\qquad\quad 0\leq \beta< 2(\arrowvert \alpha\arrowvert_1+d),\quad if\quad 2(\arrowvert \alpha\arrowvert_1 +d)\leq 1,\\
\delta^{\frac{1}{2}-\frac{\beta}{2}},\,\,\qquad 1< \beta< 2(\arrowvert \alpha\arrowvert_1 +d),\quad if\quad 2(\arrowvert \alpha\arrowvert_1+d)\geq 1.
\end{array}
\right.
\end{equation}
\end{proposition}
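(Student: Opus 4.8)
The plan is to split the square function $\mathfrak{S}_\delta$ into a low-frequency piece and a high-frequency piece, according to whether the spectral parameter $t$ in $\phi(\delta^{-1}(1-L_\alpha/t^2))$ lies in a bounded range or is large, say $t\lesssim 1$ versus $t\gtrsim 1$. For the low-frequency part, since the spectrum of $L_\alpha$ is discrete and bounded below by $2(|\alpha|_1+d)$, only finitely many eigenvalues are involved; the cutoff $\phi(\delta^{-1}(1-L_\alpha/t^2))$ is nonzero only when $t^2$ is comparable to $e_n$ for some $n$, which forces $t$ bounded. On this range I would use the weighted estimate of Lemma~\ref{weighted}, namely $\|(1+|x|)^{2\gamma}g\|_2\lesssim\|(1+L_\alpha)^\gamma g\|_2$, to absorb the weight $(1+|x|)^{-\beta}$: roughly, one writes $(1+|x|)^{-\beta/2}\mathfrak{S}_\delta^{\mathrm{low}}f$ in terms of $(1+L_\alpha)^{-\beta/4}$ applied to a function, then uses that $(1+L_\alpha)^{\beta/4}\phi(\delta^{-1}(1-L_\alpha/t^2))$ is still a bounded operator (with operator norm controlled because the support in $t$ is bounded), and finally brings the weight back in on the $f$-side by the dual inequality. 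The $L^2$-orthogonality of the Laguerre projections $P_n$ together with $\int_0^\infty |\phi(\delta^{-1}(1-e_n/t^2))|^2\,dt/t \lesssim \delta$ (a one-variable change of variables, since $\phi$ is supported in an interval of length comparable to $1$ and $\delta^{-1}(1-e_n/t^2)$ sweeps through it as $t$ varies over a set of logarithmic length $\sim\delta$) yields the gain $C\delta$ with no loss, which is even better than the claimed $\delta A_{\beta,d}^\varepsilon(\delta)$.

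For the high-frequency part, the strategy is to decompose dyadically in the spectral variable: write $\mathfrak{S}_\delta^{\mathrm{high}}f$ as a sum over $N=2^j$, $j\geq 0$, of contributions localized to $\sqrt{L_\alpha}\sim N$, i.e. of operators $F_N(\sqrt{L_\alpha})$ with $F_N$ supported in $[N/4,N]$. For each such $N$ I would apply the extended trace lemma (Lemma~\ref{ETrace}): estimate~\eqref{ETrace1} in the regime $\beta>1$ and estimate~\eqref{ETrace2} in the regime $0<\beta\leq 1$. Writing $F_N(\xi)=\mathbf{1}_{[N/4,N]}(\xi)\int_0^\infty \phi(\delta^{-1}(1-\xi^2/t^2))\,\cdots$, the rescaled function $\delta_N F_N$ is supported in a $\delta$-neighbourhood of a point and the norms $\|\delta_N F_N\|_{N^2,q}$ can be computed explicitly: in the $q=2$ case this produces a factor $\sim \delta$ (the measure of the support), and in the $q=2\beta^{-1}(1+\varepsilon)$ case it produces a factor $\sim \delta^{\beta/(1+\varepsilon)}$. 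Combining with the prefactors $N$ and $N^{\beta/(1+\varepsilon)}$ respectively, and then summing the geometric-type series in $N$ — here one uses that the $t$-support in $\mathfrak{S}_\delta$ is essentially all of $(0,\infty)$ but the contributions from different dyadic scales $N$ are almost orthogonal and the relevant series converges once $\beta<2(|\alpha|_1+d)$ — yields the bound $C\delta\cdot\delta^{-\varepsilon}$ when $2(|\alpha|_1+d)\le 1$ (so that necessarily $\beta\le 1$), and $C\delta\cdot\delta^{(1-\beta)/2}$ when $2(|\alpha|_1+d)\ge 1$ and $1<\beta<2(|\alpha|_1+d)$. To pass from the exponent $\beta/(1+\varepsilon)$ coming out of \eqref{ETrace2} to the clean exponent in $A_{\beta,d}^\varepsilon(\delta)$ one reabsorbs the $\varepsilon$-loss, which is why the statement carries an arbitrary $\varepsilon>0$.

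I expect the main obstacle to be the high-frequency estimate, and specifically two intertwined points: first, verifying that $\delta_N F_N$ is genuinely concentrated on a set of measure $\sim\delta$ and has the right $N^2$-averaged $L^q$ norm — this requires a careful change of variables relating the spectral variable $\xi$, the dilation $t$, and the parameter $\delta$, together with the support properties of $\phi$ — and second, the resummation over dyadic scales $N$. The summation is delicate because the integral $\int_0^\infty\cdots dt/t$ is scale-invariant, so one cannot simply bound each $N$-piece by a constant times a summable sequence; instead one must exploit that for fixed $t$ the operator $\phi(\delta^{-1}(1-L_\alpha/t^2))$ is spectrally localized to $\sqrt{L_\alpha}\sim t$, so the $N$-piece only sees $t\sim N$, decoupling the $t$-integral into the dyadic sum with genuine (almost-)orthogonality. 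A secondary technical issue is ensuring the weight $(1+|x|)^{-\beta}$ interacts correctly with Lemma~\ref{ETrace}, which is stated exactly for this weight, so this should go through directly. Finally, the borderline case $\beta=1$ (where neither \eqref{ETrace1} nor the clean form of \eqref{ETrace2} is sharp) is handled by taking $\varepsilon\to 0$ in \eqref{ETrace2}, producing the $\delta^{-\varepsilon}$ loss recorded in the first line of \eqref{S4}.
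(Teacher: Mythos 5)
There is a genuine gap, and it sits exactly where you predicted the main obstacle would be: the high-frequency part. Your plan treats the entire range $t\gtrsim 1$ by dyadic decomposition in the spectral scale $N$ plus the extended trace lemma, but this cannot close for two reasons. First, Lemma \ref{ETrace} produces the \emph{unweighted} norm $\int |f|^2\,d\mu_{\alpha}$ on the right-hand side, while \eqref{S3} requires the weighted norm $\int |f|^2(1+\arrowvert x\arrowvert)^{-\beta}d\mu_{\alpha}$; the only tool in the paper for exchanging the weight against the operator is Lemma \ref{weighted}, and on a spectral piece at scale $N$ this exchange costs a factor $N^{\beta/2}$, i.e.\ the combined bound per scale grows like a positive power of $N$. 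Second, even ignoring the weight issue, the numerology of your dyadic sum fails: for a piece with $\sqrt{L_{\alpha}}\sim t\sim N$, \eqref{ETrace1} gives the prefactor $N\lVert\delta_N F\rVert_{N^2,2}^2\sim N\delta$, and the $t$-integration over the spectrally relevant set contributes another $\delta$, so the $N$-th piece is of size $\sim N\delta^{2}$; the sum over dyadic $N\geq 1$ diverges, and the condition $\beta<2(\arrowvert\alpha\arrowvert_1+d)$ plays no role in this summation (it only governs the $A_2$ property of the weight and the Littlewood--Paley inequality). There is no almost-orthogonality in $N$ that rescues this, because the loss is a genuinely growing operator-norm factor at each scale, not an overlap issue.

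This is precisely why the paper does not split at $t\sim 1$ but at $t=\delta^{-1/2}$, "optimizing two different approaches". For $t\leq\delta^{-1/2}$ the route you describe (trace lemma plus Lemma \ref{weighted}, with a further decomposition of $[2^{k-1},2^{k+2}]$ into intervals of length $\sim 2^{k}\delta$ to exploit disjointness of spectral supports) does work, exactly because the growing factor $t^{\beta-1}$ is then controlled by $\delta^{\frac12-\frac{\beta}{2}}$. For $t\geq\delta^{-1/2}$ an entirely different mechanism is needed, which your proposal does not contain: one writes $\phi_{\delta}(s)=\sum_{j\geq j_0}\phi_{\delta,j}(s)$ with $\widehat{\phi_{\delta,j}}$ supported in $\{|u|\sim 2^{j}\}$, uses the finite propagation speed of $\cos(t\sqrt{L_{\alpha}})$ (Lemma \ref{Finit}, a consequence of the Gaussian heat-kernel bounds) to conclude that the kernel of $\phi_{\delta,j}(t^{-1}\sqrt{L_{\alpha}})$ lives in $\{|x-y|\lesssim 2^{j}/t\}$, decomposes $\mathbb{R}_{+}^{d}$ into cubes $Q_m$ of side $\sim 2^{j-k}$ on which the weight $(1+\arrowvert x\arrowvert)^{-\beta}$ is essentially constant (so the weighted norm of $f$ is recovered on the right-hand side cube by cube), and then sums using the rapid decay $|\phi_{\delta,j}|\lesssim_N 2^{(j_0-j)N}$ in $j-j_0$ coming from the smoothness of $\phi$. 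Without this spatial localization step your high-frequency estimate neither produces the correct right-hand side nor sums over scales, so the proposal as written does not prove Proposition \ref{SP}.
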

Once we have the estimate \eqref{S3}, the proof of Theorem \ref{T1.2} follows from the argument outlined below. \\
\par\textbf{Proof of the sufficiency part of Theorem \ref{T1.2}.}
 Since $\lambda>0$, we choose $\eta>0$ (which will be taken arbitrarily small later) such that $\lambda-\eta>0$. We then set $\rho=\lambda-\eta-\frac{1}{2}$. With this choice of $\rho$ we can use \eqref{S1}. We first estimate the first term on the right-hand side of \eqref{S1}. Since $(1+\arrowvert x\arrowvert)^{-\beta}\in A_2(\mu_{\alpha})$ and the heat kernel of $L_{\alpha}$ satisfies the Gaussian bound, the argument from \cite[Lemma 3.1]{CLSL} shows that
\begin{equation}
\left\lVert \sup_{0<t<\infty}\arrowvert \phi_{0}^{\rho}(t^{-1}\sqrt{L_{\alpha}})f\arrowvert \right\rVert_{L^2(\mathbb{R}_{+}^d,(1+\arrowvert x\arrowvert)^{-\beta}d\mu_{\alpha})}\leq \lVert \mathscr{M} f\rVert_{L^2(\mathbb{R}_{+}^d,(1+\arrowvert x\arrowvert)^{-\beta}d\mu_{\alpha})}\leq C\lVert f\rVert_{L^2(\mathbb{R}_{+}^d,(1+\arrowvert x\arrowvert)^{-\beta}d\mu_{\alpha})}.
\end{equation} 
where $\mathscr{M}$ is the Hardy-Littlewood maximal operator defined in \eqref{HLM}. Thus, it suffices to show that the operator
\begin{equation*}
\sum_{k=1}^{\infty}2^{-k\rho}\left(\int_0^{\infty}\arrowvert \phi_{k}^{\rho}(t^{-1}\sqrt{L_{\alpha}})f(x)\arrowvert^2\frac{dt}{t}\right)^{1/2}
\end{equation*}
is bounded on $L^2(\mathbb{R}_{+}^d,(1+\arrowvert x\arrowvert)^{-\beta}d\mu_{\alpha})$. If $ 2(\arrowvert \alpha\arrowvert_1 +d)\leq 1$, then using Minkowski's inequlaity and the estimate \eqref{S3} with $\delta=2^{-k}$, we obtain, by our choice of $\rho$,
\begin{equation*}
\begin{aligned}
&\left\lVert\sum_{k=1}^{\infty}2^{-k\rho}\left(\int_0^{\infty}\arrowvert \phi_{k}^{\rho}(t^{-1}\sqrt{L_{\alpha}})f(x)\arrowvert^2\frac{dt}{t}\right)^{1/2}\right\rVert_{L^2(\mathbb{R}_{+}^d,(1+\arrowvert x\arrowvert)^{-\beta}d\mu_{\alpha})}\\
&\leq C\sum_{k=1}^{\infty}2^{-k(\lambda-\eta-\varepsilon/2)}\lVert f\rVert_{L^2(\mathbb{R}_{+}^d,(1+\arrowvert x\arrowvert)^{-\beta}d\mu_{\alpha})}.
\end{aligned}
\end{equation*}
Taking $\eta$ and $\varepsilon$ small enough, the right hand side is bounded by $C\lVert f\rVert_{L^2(\mathbb{R}_{+}^d,(1+\arrowvert x\arrowvert)^{-\beta}d\mu_{\alpha})}$. Hence, this gives the desired boundedness of $S_{*}^{\lambda}(L_{\alpha})$ on $ L^2(\mathbb{R}_{+}^d,(1+\arrowvert x\arrowvert)^{-\beta}d\mu_{\alpha})$ for $ 2(\arrowvert \alpha\arrowvert_1 +d)\leq 1$ and $0\leq\beta< 2(\arrowvert \alpha\arrowvert_1 +d)$. The same argument extends to the case $ 2(\arrowvert \alpha\arrowvert_1 +d)> 1$ and $\beta>1$, and the range of $\beta$ can be extended to $\beta>0$ by interpolation. For more details, one can refer to \cite[p.18]{CD}.$\hfill{\Box}$\\
\par Now, to complete the proof of the sufficiency part of Theorem \ref{T1.2}, it remains to prove Proposition \ref{SP}.
\subsection{Proof of the square function estimate}
In this subsection, we establish Proposition \ref{SP}. To this end, we decompose $\mathfrak{S}_{\delta}$ into high- and low-frequency parts. First, for every fixed $\alpha\in(-1,\infty)^{d}$, if $\arrowvert \alpha\arrowvert_1\geq 0$, in this case we set
\begin{equation*}
\mathfrak{S}_{\delta}^{l}f(x)=\left(\int_{1/2}^{\delta^{-1/2}}\arrowvert \phi\left(\delta^{-1}\left(1-\frac{L_{\alpha}}{t^2}\right)\right)f(x)\arrowvert^2\frac{dt}{t}\right)^{1/2},
\end{equation*}
\begin{equation*}
\mathfrak{S}_{\delta}^{h}f(x)=\left(\int_{\delta^{-1/2}}^{\infty}\arrowvert \phi\left(\delta^{-1}\left(1-\frac{L_{\alpha}}{t^2}\right)\right)f(x)\arrowvert^2\frac{dt}{t}\right)^{1/2}.
\end{equation*}
Second, if $-d<\arrowvert \alpha\arrowvert_1< 0$, in this case we set
\begin{equation*}
\mathfrak{S}_{\delta}^{l}f(x)=\left(\int_{2^{[\log_2\sqrt{\arrowvert \alpha\arrowvert_1+d}]-1}}^{\delta^{-1/2}}\arrowvert \phi\left(\delta^{-1}\left(1-\frac{L_{\alpha}}{t^2}\right)\right)f(x)\arrowvert^2\frac{dt}{t}\right)^{1/2},
\end{equation*}
\begin{equation*}
\mathfrak{S}_{\delta}^{h}f(x)=\left(\int_{\delta^{-1/2}}^{\infty}\arrowvert \phi\left(\delta^{-1}\left(1-\frac{L_{\alpha}}{t^2}\right)\right)f(x)\arrowvert^2\frac{dt}{t}\right)^{1/2}.
\end{equation*}
Here, the notation $[\cdot]$ denotes the floor function.
If $\arrowvert \alpha\arrowvert_1\geq 0$, then the first eigenvalue of the Laguerre operator $L_\alpha$ is larger than or equal to 1, $\phi\left(\delta^{-1}\left(1-\frac{L_{\alpha}}{t^2}\right)\right)$=0 if $t\leq1$ because supp\,$\phi\subset(2^{-3},2^{-1})$ and $\delta\leq1/2$. Similarly, if $-d<\arrowvert \alpha\arrowvert_1< 0$, the first eigenvalue of the Laguerre operator $L_\alpha$ is larger than or equal to $2(\arrowvert\alpha\arrowvert_1+d)>0$, and $\phi\left(\delta^{-1}\left(1-\frac{L_{\alpha}}{t^2}\right)\right)$=0 if $t\leq \sqrt{2(\arrowvert\alpha\arrowvert_1+d)}$. Thus in both cases, we have
$$\mathfrak{S}_{\delta}f(x)\leq\mathfrak{S}_{\delta}^{l}f(x)+\mathfrak{S}_{\delta}^{h}f(x).$$
\par Now, in order to prove Proposition \ref{SP}, it is sufficient to show the following lemma.
\begin{lemma}\label{SL}
Let $A_{\beta,d}^{\varepsilon}(\delta)$ be given by \eqref{S4}. Then, for all $0<\delta<1/2$ and $0<\varepsilon\leq1/2$, we have the following estimates:
\begin{equation}\label{low}
\int_{\mathbb{R}_{+}^{d}}\arrowvert\mathfrak{S}_{\delta}^{l}f(x)\arrowvert^2(1+\arrowvert x\arrowvert)^{-\beta}d\mu_{\alpha}(x)\leq C\delta A_{\beta,d}^{\varepsilon}(\delta)\int_{\mathbb{R}_{+}^{d}}\arrowvert f(x)\arrowvert^2(1+\arrowvert x\arrowvert)^{-\beta}d\mu_{\alpha}(x),
\end{equation}
\begin{equation}\label{high}
\int_{\mathbb{R}_{+}^{d}}\arrowvert\mathfrak{S}_{\delta}^{h}f(x)\arrowvert^2(1+\arrowvert x\arrowvert)^{-\beta}d\mu_{\alpha}(x)\leq C\delta A_{\beta,d}^{\varepsilon}(\delta)\int_{\mathbb{R}_{+}^{d}}\arrowvert f(x)\arrowvert^2(1+\arrowvert x\arrowvert)^{-\beta}d\mu_{\alpha}(x).
\end{equation}
\end{lemma}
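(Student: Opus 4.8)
The plan is to split $\mathfrak{S}_\delta$ into its low- and high-frequency pieces as already set up before the lemma, and prove the two estimates \eqref{low} and \eqref{high} by quite different mechanisms. The unifying idea is a Littlewood--Paley reduction: using Proposition \ref{PL} (which applies since $-2(|\alpha|_1+d)<-\beta<0$, here we use $(1+|x|)^{-\beta}$ with $-\beta$ in the admissible range, equivalently $(1+|x|^\beta)^{-1}\in A_2(\mu_\alpha)$), one decomposes $f=\sum_k \psi_k(\sqrt{L_\alpha})f$ and exploits the fact that for a fixed dyadic frequency scale $2^k$ the multiplier $\phi(\delta^{-1}(1-L_\alpha/t^2))$ is supported where $t\sim 2^k$, so that the $t$-integral against $dt/t$ only sees a bounded number of scales and the square function becomes, up to harmless factors, a single localized operator $F(\sqrt{L_\alpha})$ with $F$ supported in an interval of length $\sim \delta 2^k$ around $2^k$.

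For the low-frequency part $\mathfrak{S}_\delta^l$, where $t$ ranges over $[c,\delta^{-1/2}]$ so $L_\alpha\lesssim \delta^{-1}$, the idea is to trade the weight $(1+|x|)^{-\beta}$ for powers of $L_\alpha$ via Lemma \ref{weighted}: schematically $\|(1+|x|)^{-\beta}g\|_2 \lesssim \|(1+L_\alpha)^{-\beta/2}g\|_2$-type bounds (applied in dual form), which on the frequency band $L_\alpha\sim t^2\lesssim\delta^{-1}$ contributes a factor $\delta^{\beta/2}$ (when $\beta>1$) or is essentially harmless (when $\beta\le 1$, giving the $\delta^{-\varepsilon}$ loss). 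One then reduces to an \emph{unweighted} $L^2(d\mu_\alpha)$ bound for $\mathfrak{S}_\delta^l$, which follows from spectral orthogonality: $\|\phi(\delta^{-1}(1-L_\alpha/t^2))f\|_2^2$ integrated in $dt/t$ equals, by Plancherel in the spectral variable, $\sum_n \big(\int |\phi(\delta^{-1}(1-e_n/t^2))|^2 \frac{dt}{t}\big)|P_n f|\!\cdot\!... $, and the inner integral is $O(\delta)$ uniformly in $n$ because for each eigenvalue $e_n$ the set of admissible $t$ has logarithmic measure $\lesssim\delta$. Combining the $\delta$ from orthogonality with the $\delta^{\beta/2}$ (or $\delta^{-\varepsilon}$) from the weight trade gives exactly $\delta A^\varepsilon_{\beta,d}(\delta)$.

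For the high-frequency part $\mathfrak{S}_\delta^h$, where $t\ge\delta^{-1/2}$ so the relevant eigenvalues are $e_n\sim t^2\gtrsim \delta^{-1}$, i.e.\ $n\gtrsim\delta^{-1}$, the plan is to use the trace-type estimate of Lemma \ref{ETrace}. After the Littlewood--Paley decomposition one is left, on each scale $2^k\ge\delta^{-1/2}$, with an operator $F_k(\sqrt{L_\alpha})$ where $F_k$ is supported in $[2^{k-2},2^k]$ and, crucially, $\|\delta_{2^k}F_k\|_{N^2,q}$ with $N\sim 2^k$ is controlled by the sup-norm of $\phi$ times the fraction $\sim\delta$ of subintervals on which $F_k$ is nonzero — so $\|\delta_NF_k\|_{N^2,2}^2\lesssim \delta$ and $\|\delta_NF_k\|_{N^2,q}^2\lesssim \delta^{2/q}$ for the relevant $q=2\beta^{-1}(1+\varepsilon)$. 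Feeding this into \eqref{ETrace1} (for $\beta>1$) gives a bound $\lesssim N\cdot\delta\cdot N^{-\beta}\sim \delta\, 2^{k(1-\beta)}$ after restoring the weight, which is summable over $k$ with $2^k\ge\delta^{-1/2}$ to $\delta\cdot\delta^{(\beta-1)/2}=\delta^{(\beta+1)/2}=\delta A^\varepsilon_{\beta,d}(\delta)$; feeding it into \eqref{ETrace2} (for $\beta\le1$) gives the $\delta^{-\varepsilon}$-type bound analogously, where the finite speed of propagation (Lemma \ref{Finit}) together with the Gaussian heat kernel bound \eqref{kernel} is what legitimizes the use of the $\|\cdot\|_{N^2,q}$ norms (localizing $F_k(\sqrt{L_\alpha})$ at spatial scale $\sim 1$, consistent with the weight $(1+|x|)^{-\beta}$ being roughly constant on unit balls).

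The main obstacle I expect is the bookkeeping in the high-frequency estimate: one must verify that after the dyadic decomposition of $(1-|\xi|^2)_+^\rho$ is \emph{not} reintroduced here (that was done earlier, at the level of $S^\lambda_*$) but rather that the single bump $\phi$ defining $\mathfrak{S}_\delta$ interacts correctly with a Littlewood--Paley decomposition in $\sqrt{L_\alpha}$, and that the resulting operators really do fall under the hypotheses of Lemma \ref{ETrace} with the sharp $\|\delta_NF\|_{N^2,q}$ factor $\delta^{1/q}$ — in particular checking that the near-diagonal and off-diagonal (in $t$ vs.\ frequency scale) contributions are negligible, which is where the finite-speed-of-propagation localization does the real work. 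The low-frequency part is more routine once Lemma \ref{weighted} is invoked, the only subtlety being to apply it in the correct (dual) form so that the negative power $(1+|x|)^{-\beta}$ rather than a positive power appears, and to handle the endpoint-free $\delta^{-\varepsilon}$ loss when $\beta\le1$.
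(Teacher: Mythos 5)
Your overall inventory of ingredients (Littlewood--Paley in $\sqrt{L_\alpha}$, spectral orthogonality in $t$, Lemma \ref{weighted}, the trace Lemma \ref{ETrace}, finite speed of propagation) matches the paper's, but the way you assemble them contains two genuine errors. For the low-frequency bound \eqref{low}, the inequality you call the ``dual form'' of Lemma \ref{weighted}, namely $\|(1+|x|)^{-\beta}g\|_2\lesssim\|(1+L_\alpha)^{-\beta/2}g\|_2$, goes the wrong way: the actual dual of \eqref{weighted1.1} is $\|(1+L_\alpha)^{-\gamma}g\|_2\lesssim\|(1+|x|)^{-2\gamma}g\|_2$, and the inequality you need is equivalent to $\|(1+L_\alpha)^{\beta/2}h\|_2\lesssim\|(1+|x|)^{\beta}h\|_2$, which is false (highly oscillatory $h$ with bounded support). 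In the paper, Lemma \ref{weighted} enters only at the very end, on the \emph{right-hand side}, to pass from $\|(1+L_\alpha)^{-\beta/4}f\|_2$ back to $\|f\|_{L^2((1+|x|)^{-\beta}d\mu_\alpha)}$; the conversion of the weight on the \emph{left} into the factor $A^{\varepsilon}_{\beta,d}(\delta)$ is done by the trace Lemma \ref{ETrace} applied to the multiplier $\phi_\delta(\cdot/t)(1+L_\alpha)^{\beta/4}$ (the key claim \eqref{Claim}--\eqref{Claim1}), and your low-frequency argument never invokes the trace lemma at all, even though it is the source of the essential $N^{-1}$-type gain. Your bookkeeping also betrays this: you claim a net factor $\delta\cdot\delta^{\beta/2}$, whereas for $\beta>1$ the statement to prove is $\delta A^{\varepsilon}_{\beta,d}(\delta)=\delta^{3/2-\beta/2}$, a strictly weaker (and in fact essentially sharp) bound; a factor $\delta^{1+\beta/2}$ is not attainable.

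The same sign error corrupts the high-frequency sketch: you treat $A^{\varepsilon}_{\beta,d}(\delta)$ as the decaying factor $\delta^{(\beta-1)/2}$, but by \eqref{S4} it is $\delta^{1/2-\beta/2}=\delta^{-(\beta-1)/2}$, a growing factor. Moreover the quantitative mechanism you propose does not work: \eqref{ETrace1} gives $N\|\delta_NF\|_{N^2,2}^2\lesssim N\delta$ on each dyadic scale $N\sim 2^k$ (there is no extra $N^{-\beta}$ produced by the trace lemma --- the weight plays no role in \eqref{ETrace1}), and $\sum_{2^k\geq\delta^{-1/2}}\delta\,2^k$ diverges. This divergence is precisely why the threshold $\delta^{-1/2}$ is chosen and why, for $t\geq\delta^{-1/2}$, the paper abandons the direct trace-lemma route and instead runs the Carbery--Rubio de Francia--Vega type localization: decompose $\phi_\delta=\sum_{j\geq j_0}\phi_{\delta,j}$ according to the support of $\widehat{\phi_\delta}$, use Lemma \ref{Finit} to localize $\phi_{\delta,j}(t^{-1}\sqrt{L_\alpha})$ to the scale $2^{j}/t$, tile $\mathbb{R}_+^d$ by cubes $Q_m$ of side $\sim 2^{j-k}$, and treat separately the cases $j>k$ (cubes near the origin, where the weight costs $2^{(j-k)\beta}$ but the rapid decay $2^{(j_0-j)N}$ of $\phi_{\delta,j}$ absorbs it) and $j\leq k$ (where the weight is essentially constant on each cube). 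Your proposal gestures at this localization but replaces its quantitative content with a trace-lemma bound that does not hold, so as written neither \eqref{low} nor \eqref{high} is established.
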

Regarding the proof of this lemma, let us first make a few remarks. Both of the proofs of the estimates \eqref{low} and \eqref{high} rely on the generalized trace Lemma \ref{ETrace}. The primary distinction lies in the fact that, for the estimate in \eqref{low}, we also employ the estimate in \eqref{weighted}, which proves particularly effective for the low-frequency part. This approach was originally introduced in \cite{CD}. For the estimate in \eqref{high}, we utilize a spatial localization argument based on the finite speed of propagation of the Laguerre wave operator $\cos(t\sqrt{L_{\alpha}})$. The strategy of the proof was first developed in \cite{Carbery1} and has since been applied to related problems, such as those in \cite{CD, CLSL}. The choice of $\delta^{-\frac{1}{2}}$ in the definition of $\mathfrak{S}_{\delta}^{l}$, $\mathfrak{S}_{\delta}^{h}$ is made by optimizing the estimates which result from two different approaches, see \cite[Remark 3.2]{CD}. Given that the proof of this lemma is quite intricate and plays a central role in this paper, we defer the detailed proof to the appendix for the sake of clarity of this paper.

\section{Sharpness of the summability indices}\label{Section4}
The sharpness of the summability indices (up to the endpoints) in Theorem \ref{T1.1} and Theorem \ref{T1.2} follows from the two propositions below.
\begin{proposition}\label{Sp1}
Let $2\arrowvert\alpha\arrowvert_1+2d>1$ and $p>\frac{4\arrowvert \alpha\arrowvert_1+4d}{2\arrowvert\alpha\arrowvert_1+2d-1}$. If
\begin{equation}\label{Sp1.1}
\sup_{R>0}\arrowvert S_{R}^{\lambda}(L_{\alpha})f\arrowvert<\infty\quad a.e
\end{equation}
for all $f\in L^{p}(\mathbb{R}_{+}^d,d\mu_{\alpha}(x))$, then we have $\lambda\geq\lambda(\alpha,p)/2$.
\end{proposition}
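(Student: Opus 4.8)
The plan is to exhibit, for each $\lambda<\lambda(\alpha,p)/2$, an explicit function $f\in L^p(\mathbb{R}_+^d,d\mu_\alpha(x))$ for which the maximal function $\sup_{R>0}|S_R^\lambda(L_\alpha)f|$ is infinite on a set of positive $\mu_\alpha$-measure. The natural candidates are radial functions, exactly as the introduction suggests: because the kernel of $S_R^\lambda(L_\alpha)$ is radial, the action of $S_R^\lambda(L_\alpha)$ on radial $f$ reduces to a one-dimensional Bessel-type (Hankel) problem of "dimension" $D:=2(\|\alpha\|_1+d)$, and the relevant spectral data are governed by the Laguerre function asymptotics in Lemma 2.12 (i.e. \eqref{L1.1}, \eqref{L1.2}). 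First I would set up this reduction: for radial $g$ on $\mathbb{R}_+^d$, write $S_R^\lambda(L_\alpha)g$ in terms of the one-dimensional Laguerre expansion of type $a=\|\alpha\|_1+d-1$, using the fact (cited from \cite{SVK}, and implicit in the generating-function computation already in the paper) that the radial part of the $d$-dimensional system of type $\alpha$ coincides with the one-dimensional system of type $\|\alpha\|_1+d-1$. This collapses the problem to the sharpness question for the one-dimensional operator $S_R^\lambda(L_a)$ restricted to radial inputs, which is precisely the setting where Xu's necessity argument for $L^p$ convergence (and Thangavelu-type constructions) live.

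Next I would pick a test function concentrated in frequency. The standard choice is to take $f$ whose Laguerre coefficients $\langle f,\varphi_\mu^a\rangle$ are supported on $|\mu|_1\in[N,2N]$ with a mild weight (for instance constant, or a slowly decaying sequence tuned to put $f$ at the edge of $L^p$), so that $f$ itself looks like a truncated spectral projection. Using the pointwise bounds \eqref{L1.1}–\eqref{L1.2}, one estimates $\|f\|_p$ from above: the main contribution comes from the "oscillatory" region $x^2\sim\nu\sim N$, where $|\mathscr{L}_n^a(x^2)|\sim \nu^{-1/4}$, and a careful bookkeeping over the dyadic frequency block gives $\|f\|_p\lesssim N^{\theta}$ for an exponent $\theta=\theta(a,p)$ computed from the measure $x^{2a+1}dx$ and the size of the Christoffel–Darboux-type sum. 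Then I would choose $R=R_N$ so that the cutoff $(1-e_n/R^2)_+^\lambda$ is $\approx 1$ on the whole block $[N,2N]$; for such $R$, $S_{R_N}^\lambda(L_a)f$ is essentially the partial sum of $f$ over this block, hence $\gtrsim |f|$ pointwise on the bulk region, giving $|S_{R_N}^\lambda(L_a)f(x)|\gtrsim N^{\sigma}$ on a set of measure bounded below. Optimizing $R$ to capture the largest such lower bound while controlling $\|f\|_p$ produces the relation $\lambda\geq \lambda(\alpha,p)/2$; the factor $1/2$ (versus Xu's $\lambda(\alpha,p)$ for norm convergence) comes from the square-function / $L^2$-averaging structure, equivalently from the fact that one only needs the maximal function to blow up rather than the $L^p$ norm, so one can sum a Nikodym-type superposition of such blocks at a geometric sequence of scales $N_j$ à la Carbery–Rubio de Francia–Vega and Lee \cite{Carbery1,Lee2}.

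Concretely, after the one-dimensional reduction I would superpose: set $f=\sum_j c_j f_{N_j}$ with $N_j\to\infty$ lacunary and $c_j$ chosen so that $\sum_j \|c_j f_{N_j}\|_p^p<\infty$ (using the disjointness of frequency supports and near-disjointness of the spatial regions where each block concentrates) while $\sum_j c_j N_j^{\sigma-\theta}=\infty$; then at $x$ in a suitable limsup set the maximal function, evaluated at $R=R_{N_j}$, exceeds $c_j N_j^{\sigma}$ for infinitely many $j$, hence is $+\infty$. The hypotheses $2\|\alpha\|_1+2d>1$ and $p>(4\|\alpha\|_1+4d)/(2\|\alpha\|_1+2d-1)$ are exactly what is needed for $\lambda(\alpha,p)/2>0$ and for the exponent arithmetic (the $L^p$ integrability of the superposition) to close — this lower threshold on $p$ guarantees that the model function, built at the critical index, is genuinely in $L^p$.

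I expect the main obstacle to be the precise computation of the two competing exponents — the $L^p$ size $\theta$ of the frequency block $f_N$ and the pointwise lower bound exponent $\sigma$ of $S_{R_N}^\lambda(L_a)f_N$ on the bulk region — done uniformly in $N$ using the non-uniform Laguerre asymptotics \eqref{L1.1}–\eqref{L1.2}, where one must carefully track the transition regions $x^2\sim 1/\nu$, $x^2\sim\nu$, and the Airy-type zone $|x^2-\nu|\lesssim\nu^{1/3}$, since these boundary layers contribute to the norm and could a priori degrade the estimate. The second delicate point is arranging the superposition so that the spatial regions of concentration of distinct blocks are essentially disjoint (or overlap with summable multiplicity), which is what prevents $\|f\|_p$ from being inflated by cross terms; here a careful choice of a very lacunary sequence $N_j$, together with the localization of $\varphi_{N_j}^a$ near $x\sim\sqrt{N_j}$, does the job. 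Once these two estimates are in hand, the conclusion $\lambda\geq\lambda(\alpha,p)/2$ is a short arithmetic argument.
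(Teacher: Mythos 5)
Your proposal takes a genuinely different route from the paper (a direct counterexample construction rather than a reduction via the Nikishin--Maurey theorem), but as written it has a gap at its very core: there is no actual divergence mechanism. You choose $R=R_N$ so that the multiplier $(1-e_n/R^2)_+^{\lambda}$ is $\approx 1$ on the frequency block of $f_N$; but then $S_{R_N}^{\lambda}(L_{\alpha})f_N\approx f_N$, so no amplification occurs --- you only recover (a piece of) $f$ itself. Worse, the superposition scheme is internally inconsistent: to keep $\lVert f\rVert_p$ finite you insist that the blocks $f_{N_j}$ concentrate on essentially disjoint spatial regions (annuli $\lvert x\rvert\sim\sqrt{N_j}$), but then each point lies in the concentration region of at most finitely many blocks, and for every $R$ one gets $\lvert S_R^{\lambda}(L_{\alpha})f(x)\rvert\lesssim$ (sum of the few blocks visible near $x$) $+$ small tails, so $\sup_R\lvert S_R^{\lambda}(L_{\alpha})f\rvert$ is comparable to $\lvert f\rvert$ plus an error and stays finite a.e.\ --- no limsup set of positive measure is produced. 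To make a direct construction work you would need genuine pointwise largeness of $S_R^{\lambda}(L_{\alpha})f_N$ \emph{away} from where $f_N$ lives (kernel tails/resonance at a well-chosen $R$), together with a quantitative Borel--Cantelli argument in the divergence direction; none of this is supplied, and the claimed gain of the factor $1/2$ is attributed to a vague ``$L^2$-averaging'' principle rather than to any computed exponent.

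For comparison, the paper never builds a counterexample. It applies the Nikishin--Maurey theorem to the maximal operator restricted to radial functions: a.e.\ finiteness on $L^p$ yields an a.e.\ positive weight $\omega$ and a weak-type bound $\sup_R\lVert S_R^{\lambda}(L_{\alpha})f\rVert_{L^{2,\infty}(\omega(\lvert x\rvert)d\mu_{\alpha})}\lesssim\lVert f\rVert_p$ (Proposition \ref{Sp3}). A subordination identity then isolates single projections, giving $\lVert P_nf\rVert_{L^{2,\infty}(\omega d\mu_{\alpha})}\lesssim n^{\lambda}\lVert f\rVert_p$. Testing with $f_n=\mathrm{sign}(\varphi_n^{\lvert\alpha\rvert_1+d-1})\lvert\varphi_n^{\lvert\alpha\rvert_1+d-1}\rvert^{1/(p-1)}$ (radial reduction as in your Lemma-\ref{L4.1}-type step), and using two facts from the Laguerre asymptotics --- the weight-independent lower bound $\gtrsim n^{-1/4}$ for the weak norm of $\varphi_n^{\lvert\alpha\rvert_1+d-1}(\lvert x\rvert)$ on $[1/2,1]$ (possible because $\omega>0$ a.e., Lemma \ref{L4.3}) and $\lVert\varphi_n^{\lvert\alpha\rvert_1+d-1}\rVert_{p'}\sim n^{(\lvert\alpha\rvert_1+d)(1/p'-1/2)}$ (Lemma \ref{L4.4}) --- one gets $n^{(\lvert\alpha\rvert_1+d)(1/2-1/p)-1/4}\lesssim n^{\lambda}$, i.e.\ $\lambda\geq\lambda(\alpha,p)/2$; the exponent $-1/4$ is exactly where the factor $1/2$ comes from, and the hypothesis on $p$ makes this threshold equal to $\lambda(\alpha,p)/2>0$. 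If you want to salvage a constructive proof you would essentially have to re-derive this projection lower bound and convert it into pointwise blow-up, which is precisely the step your sketch leaves open.
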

This proposition demonstrates that the summability index in Theorem \ref{T1.1} is sharp up to the endpoint, as we are assuming $\lambda\geq0$.
\begin{proposition}\label{Sp2}
Let $0\leq\beta<2\arrowvert\alpha\arrowvert_1+2d$. Suppose that
\begin{equation}\label{Sp2.1}
\sup_{R>0}\lVert S_{R}^{\lambda}(L_{\alpha})f\rVert_{L^2(\mathbb{R}_{+}^d,(1+\arrowvert x\arrowvert)^{-\beta}d\mu_{\alpha})\rightarrow L^2(\mathbb{R}_{+}^d,(1+\arrowvert x\arrowvert)^{-\beta}d\mu_{\alpha})}<\infty,
\end{equation}
then, we have $\lambda\geq \max\{\frac{\beta-1}{4},0\}$.
\end{proposition}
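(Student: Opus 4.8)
The part $\lambda\ge 0$ is forced already by the requirement that $S_R^\lambda(L_\alpha)$ be well defined for every $R>0$ (see the discussion after \eqref{Bochner}, and \cite{CD}), so the real content is the inequality $\lambda\ge(\beta-1)/4$ when $\beta>1$. The plan is to argue by contradiction: assuming \eqref{Sp2.1}, I will produce, for each $\lambda<(\beta-1)/4$, functions $f_R$ with $\lVert S_R^\lambda(L_\alpha)f_R\rVert_{L^2((1+|x|)^{-\beta}d\mu_\alpha)}/\lVert f_R\rVert_{L^2((1+|x|)^{-\beta}d\mu_\alpha)}\to\infty$ as $R\to\infty$. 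The first step is to exploit the radiality of the relevant kernels (highlighted in the abstract): a direct computation shows that $L_\alpha$ maps radial functions to radial functions, and that on the radial subspace of $L^2(\mathbb{R}_+^d,d\mu_\alpha)$ it agrees with the one–dimensional Laguerre operator $L_a$ of parameter $a=|\alpha|_1+d-1$ acting on $L^2((0,\infty),r^{2a+1}dr)$ (same eigenvalues $e_n=4n+2|\alpha|_1+2d$, and eigenfunctions $\varphi_n^{a}$ up to normalization). Hence $S_R^\lambda(L_\alpha)$ restricted to radial functions is $S_R^\lambda(L_a)$, and it is enough to show that $S_R^\lambda(L_a)$ is not bounded on $L^2((0,\infty),(1+r)^{-\beta}r^{2a+1}dr)$ uniformly in $R$ when $\beta>1$ and $\lambda<(\beta-1)/4$.

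For the counterexample I would take $f_R$ to be a normalization of the reproducing kernel $\sum_{e_n\le R^2}\varphi_n^{a}(r_0)\varphi_n^{a}$ of the spectral cut-off $\mathbf 1_{[0,R^2]}(L_a)$ at a fixed point $r_0\sim 1$ — a wave packet of spatial width $\sim 1/R$ near $r_0$ but spread over the whole spectrum $[e_0,R^2]$, so that $S_R^\lambda(L_a)$ does not act on it as a near–scalar (this is what rules out single eigenfunctions and Knapp-type packets, all of which give ratio $\le 1$). By construction $S_R^\lambda(L_a)f_R$ is exactly the one–dimensional Laguerre Bochner–Riesz kernel $\mathcal K_R^\lambda(\cdot,r_0)=\sum_{e_n\le R^2}(1-e_n/R^2)^\lambda\varphi_n^{a}(r_0)\varphi_n^{a}$, while the pointwise bounds of Lemma \ref{L} give $\lVert f_R\rVert^2_{L^2((1+r)^{-\beta}r^{2a+1}dr)}\approx\mathcal K_R^0(r_0,r_0)\sim R$. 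Thus the problem reduces to a lower bound for $\int_0^\infty (1+r)^\beta\,|\mathcal K_R^\lambda(r,r_0)|^2\,r^{2a+1}\,dr$.

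Here I would split the $r$–integral into the near–diagonal part $r\approx r_0$, the oscillatory bulk $1\ll r\ll R$, the turning–point layer $r\sim R$, and the forbidden region $r\gg R$ (negligible by rapid decay). In the bulk, since $r_0$ behaves like a free point, Mehler's formula together with the asymptotic \eqref{L1.2} gives $\mathcal K_R^\lambda(r,r_0)\approx c\,(rr_0)^{-(a+1/2)}\int_0^R(1-k^2/R^2)^\lambda[\cos((r-r_0)k)+\cos((r+r_0)k)]\,dk\sim (rr_0)^{-(a+1/2)}R^{-\lambda}|r-r_0|^{-\lambda-1}$, a rescaled one–dimensional Euclidean Bochner–Riesz kernel; integrating this piece alone already gives a contribution $\gtrsim R^{\beta-1-4\lambda}$ (for $\lambda<(\beta-1)/2$), hence only the non-sharp threshold $(\beta-2)/4$ after dividing by $\lVert f_R\rVert^2\sim R$. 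The upgrade to the sharp $(\beta-1)/4$ must come from the turning–point layer $r\sim R$: there the Airy regime in \eqref{L1.1} forces the eigenfunctions with eigenvalue $\lesssim R^2$ to concentrate near the boundary $\{r\lesssim R\}$, and this concentration, amplified by the large weight $(1+r)^\beta\sim R^\beta$, should supply the missing power. Combining the bulk and turning–point bounds and letting $R\to\infty$ then contradicts \eqref{Sp2.1} for every $\lambda<(\beta-1)/4$, completing the proof.

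The hard part will be precisely the lower bound for $\mathcal K_R^\lambda(\cdot,r_0)$ in (and just inside) the turning–point layer: one must add a large number of Airy–type Laguerre functions, after the smoothing by $(1-e_n/R^2)^\lambda$, without losing the gain to cancellation — that is, locate the caustic and determine its correct width and amplitude. This step is what separates the sharp exponent $(\beta-1)/4$ for $L_\alpha$ both from the naive bulk exponent $(\beta-2)/4$ and from the Euclidean Bochner–Riesz index, and it is the quantitative manifestation of the fact (noted after Theorem \ref{T1.1}) that the spectrum of $L_\alpha$ is discrete and bounded away from zero. The remaining ingredients — the diagonal contribution, the careful treatment of the weight $r^{2a+1}$ near $r=0$, and the passage from radial functions on $\mathbb{R}_+^d$ back to the statement — are routine and parallel the Hermite computation in \cite{CD}.
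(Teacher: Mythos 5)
There is a genuine gap, and it occurs at two places. First, your reduction is internally inconsistent about the weight: you compute the denominator $\lVert f_R\rVert^2$ in $L^2((1+r)^{-\beta}r^{2a+1}dr)$, obtaining $\sim R$, but then ask for a lower bound on $\int_0^\infty(1+r)^{+\beta}\lvert \mathcal K_R^\lambda(r,r_0)\rvert^2 r^{2a+1}dr$. To contradict \eqref{Sp2.1} you must use the \emph{same} weight on both sides. With $(1+r)^{-\beta}$ on both sides your test function cannot work at all: since $\lVert S_R^\lambda\rVert_{L^2\to L^2}\le 1$, the numerator is at most $\lVert f_R\rVert_2^2\sim R$, while the denominator is already $\gtrsim R$ (the near-diagonal mass of the spectral kernel sits at $r\approx r_0\sim 1$, where the weight is $\sim 1$), so the ratio stays bounded. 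With the dual weight $(1+r)^{+\beta}$ on both sides (which is the legitimate dual formulation), the denominator is no longer $\sim R$: the reproducing kernel $f_R=\mathcal K_R^0(\cdot,r_0)$ itself has bulk and turning-point tails out to $r\sim R$, and these are weighted by $(1+r)^\beta\sim R^\beta$, so they enter the denominator with the same strength as the contributions you hope to extract from $S_R^\lambda f_R=\mathcal K_R^\lambda(\cdot,r_0)$; since the Riesz factor $(1-e_n/R^2)_+^\lambda$ only smooths and further localizes the kernel, it is far from clear (and in my view doubtful) that this particular input exhibits any blow-up, let alone the sharp rate. Second, even granting the setup, the decisive step — the lower bound for $\mathcal K_R^\lambda(\cdot,r_0)$ in the turning-point layer, which you yourself identify as what upgrades the non-sharp exponent $(\beta-2)/4$ to $(\beta-1)/4$ — is not proved; it is only announced as "the hard part". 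As it stands the argument establishes nothing beyond the trivial bound $\lambda\ge 0$.

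For comparison, the paper avoids kernel asymptotics of $\mathcal K_R^\lambda$ entirely. It first dualizes \eqref{Sp2.1} to boundedness on $L^2((1+\arrowvert x\arrowvert)^{+\beta}d\mu_\alpha)$, then uses the subordination identity \eqref{P} with a bump $\eta(\cdot-(4n+2\arrowvert\alpha\arrowvert_1+2d))$ to convert the uniform Bochner--Riesz bound into the projection estimate \eqref{P4.28}, $\lVert P_nf\rVert_{L^2((1+\arrowvert x\arrowvert)^{\beta})}\le Cn^{\lambda}\lVert f\rVert_{L^2((1+\arrowvert x\arrowvert)^{\beta})}$. The test function is then $G_n=g_n(1+\arrowvert x\arrowvert)^{-\beta}$ with $g_n$ a single tensor eigenfunction $\varphi_n^{\alpha_1}\otimes\varphi_0^{\alpha_2}\otimes\cdots\otimes\varphi_0^{\alpha_d}$: the damping by $(1+\arrowvert x\arrowvert)^{-\beta}$ keeps the input norm small (controlled by $\lVert g_n\rVert_{L^2((1+\arrowvert x\arrowvert)^{-\beta})}$, bounded below via \eqref{L4.8.2}), while $P_nG_n$ regenerates a multiple of $g_n$, whose $(1+\arrowvert x\arrowvert)^{\beta}$-weighted norm is large by the reverse estimate of Lemma \ref{L4.9} ($n^{\beta/4}\lVert P_nf\rVert_2\le C\lVert P_nf\rVert_{L^2((1+\arrowvert x\arrowvert)^{\beta})}$, coming from the bulk asymptotics \eqref{L1.2}, not from an Airy/caustic analysis). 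Chaining these gives $n^{\beta/4}\le Cn^{\lambda}\min\{n^{\beta/4},n^{1/4}\}$ and hence $\lambda\ge\max\{(\beta-1)/4,0\}$. If you want to salvage your approach, you would need to (i) fix the weight mismatch by working throughout in the dual space $L^2((1+r)^{\beta})$, (ii) replace the reproducing-kernel input by something damped at large $r$ (precisely the role of the factor $(1+\arrowvert x\arrowvert)^{-\beta}$ in the paper's $G_n$), and (iii) actually prove the turning-point lower bound — at which point you would essentially be reconstructing the paper's argument in radial form.
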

Since 
\begin{equation*}
\sup_{R>0}\lVert S_{R}^{\lambda}(L_{\alpha})f\rVert_{L^2(\mathbb{R}_{+}^d,(1+\arrowvert x\arrowvert)^{-\beta}d\mu_{\alpha})}\leq \lVert \sup_{R>0}\arrowvert S_{R}^{\lambda}(L_{\alpha})f\arrowvert\rVert_{L^2(\mathbb{R}_{+}^d,(1+\arrowvert x\arrowvert)^{-\beta}d\mu_{\alpha})},
\end{equation*}
it is clear that Proposition \ref{Sp2} implies the necessity part of Theorem \ref{T1.2}. We now proceed to prove these two propositions one by one.
\subsection{Proof of Proposition \ref{Sp1}}
To prove Proposition \ref{Sp1}, we use the following consequence of the Nikishin-Maurey theorem. 
\begin{theorem}\label{TNM} \cite[Theorem 4.3]{CD} Let $1\leq p<\infty$ and $(X,\mu)$ be a $\sigma$-finite measure space. Suppose that $\{T_m\}_{m\in \mathbb{N}}$ is a sequence of linear operators which are continuous from $L^{p}(X,\mu)$ to $L^{0}(X,\mu)$ of all measurable functions on $(X,\mu)$ (with the topology of convergence in measure). Assume that $T^{*}f=\sup_{m}\arrowvert T_m f\arrowvert<\infty$ a.e. whenever $f\in L^{p}(X,\mu)$, then there exists a measurable function $\omega>0$ a.e. such that
\begin{equation*}
\int_{\{x: \arrowvert T^{*}f>\delta \}}\omega(x)d\mu(x)\leq C\left(\frac{\lVert f\rVert_{p}}{\alpha}\right)^{q}\quad,\qquad\alpha>0
\end{equation*}
for all $f\in L^{p}(X,\mu)$ with $q$=min$(p,2)$.
\end{theorem}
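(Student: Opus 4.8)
The statement is a version of the Nikishin--Maurey factorization theorem, and to prove it from scratch I would build the weight $\omega$ in three stages: first upgrade the pointwise finiteness of $T^{*}$ to a quantitative bound measured in $\mu$, then exploit the type of $L^{p}$ to randomize, and finally run a convexity argument to manufacture a \emph{single} weight valid for all $f$ simultaneously. Since $\mu$ is $\sigma$-finite, I would first reduce to a finite measure space by exhausting $X=\bigcup_k X_k$ with $\mu(X_k)<\infty$, constructing a weight on each piece and patching them with a convergent sequence of normalizing constants; the positivity $\omega>0$ a.e. on all of $X$ survives this gluing.

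\emph{Step 1 (from a.e. finiteness to a weak-in-measure bound).} On a finite measure space the space $L^{0}(X,\mu)$, equipped with the metric of convergence in measure, is a complete metrizable topological vector space. The maps $f\mapsto T_m f$ are continuous $L^{p}\to L^{0}$ by hypothesis, and the assumption $T^{*}f<\infty$ a.e. says that $T^{*}f\in L^{0}$ for every $f$. A Baire category (Banach--Steinhaus type) argument for the sublinear maximal operator $T^{*}$ in the F-space $L^{0}$ then yields equicontinuity at the origin: for every $\eta>0$ there is $\delta(\eta)>0$ such that $\|f\|_{p}\le\delta(\eta)$ forces $\mu(\{T^{*}f>1\})\le\eta$. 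Because each $T_m$ is linear, $T^{*}$ is positively homogeneous, so this self-improves to
\[
\mu\bigl(\{x:T^{*}f(x)>\lambda\}\bigr)\le\Phi\!\left(\lambda/\|f\|_{p}\right),\qquad \Phi(s)\to 0\ \text{as}\ s\to\infty,
\]
with no rate required on $\Phi$.

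\emph{Step 2 (randomization via the type of $L^{p}$).} The crucial observation is that the exponent $q=\min(p,2)$ is exactly the Rademacher type of $L^{p}$: for any finite family $f_1,\dots,f_N\in L^{p}$ and independent signs $\epsilon_j$ one has $\bigl(\mathbb{E}_{\epsilon}\|\sum_j\epsilon_j f_j\|_{p}^{q}\bigr)^{1/q}\le C\bigl(\sum_j\|f_j\|_{p}^{q}\bigr)^{1/q}$. Applying the weak-in-measure bound of Step 1 to the random sum $\sum_j\epsilon_j f_j$, using $T_m(\sum_j\epsilon_j f_j)=\sum_j\epsilon_j T_m f_j\le T^{*}(\sum_j\epsilon_j f_j)$, averaging over the signs, and invoking Khintchine's inequality to replace the randomized sum $\sum_j\epsilon_j T_m f_j$ by the square function $(\sum_j|T_m f_j|^{2})^{1/2}$, produces the key finite-family estimate controlling $\sum_j|T^{*}f_j|^{q}$ in terms of $\sum_j\|f_j\|_{p}^{q}$ on a set of large $\mu$-measure. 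Here $q$ enters precisely through the type inequality, which is why the conclusion carries the exponent $q=\min(p,2)$.

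\emph{Step 3 (convexity/minimax to build $\omega$).} Finally I would convert these uniform finite-family bounds into a single weight. The clean route is Hahn--Banach separation: inside $L^{1}(\mu)$ consider the convex set generated by the functions $\sum_j c_j\mathbf{1}_{\{T^{*}f_j>\lambda_j\}}$ with $\sum_j c_j(\|f_j\|_{p}/\lambda_j)^{q}\le1$ and separate it from the cone of weights one wishes to dominate; the finite-family control of Step 2 guarantees the separation succeeds, and the separating functional, after normalization, is the desired $\omega$ satisfying $\int_{\{T^{*}f>\lambda\}}\omega\,d\mu\le C(\|f\|_{p}/\lambda)^{q}$. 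Equivalently one may run Nikishin's exhaustion, peeling off at each stage a set where the estimate is saturated and summing reciprocals of the removed measures. The main obstacle is exactly this Step~3: transferring estimates that hold for each individual $f$ (or finite family) into one weight $\omega$ uniform over all of $L^{p}$, and in particular verifying that the separating functional is strictly positive $\mu$-a.e. rather than merely nonnegative. The finite-family inequality of Step~2 is the precise input that makes the separation (or exhaustion) terminate with a positive, finite weight.
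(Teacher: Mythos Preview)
The paper does not prove this statement at all: it is quoted verbatim from \cite[Theorem~4.3]{CD} as a black-box consequence of the classical Nikishin--Maurey factorization theorem, and is immediately applied (in the proof of Proposition~\ref{Sp1}) to the radial restriction of $S_{R_m}^{\lambda}(L_\alpha)$. So there is no ``paper's own proof'' to compare against.

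Your sketch is a faithful outline of the standard Nikishin--Maurey argument as presented, e.g., in Garc\'ia-Cuerva--Rubio de Francia or in Maurey's original papers: Stein/Nikishin continuity principle via Baire category in $L^0$ (Step~1), randomization through the Rademacher type of $L^p$ to produce the exponent $q=\min(p,2)$ (Step~2), and a separation/exhaustion argument to pass from finite-family control to a single a.e.\ positive weight (Step~3). The structure is correct and the identification of Step~3 as the delicate point---in particular ensuring strict positivity of the separating functional---is accurate. One small imprecision: in Step~2 the inequality $T_m(\sum_j\epsilon_j f_j)\le T^{*}(\sum_j\epsilon_j f_j)$ as written is a pointwise bound on a signed quantity; what you actually use is $\bigl(\sum_j|T_m f_j|^2\bigr)^{1/2}\lesssim \mathbb{E}_\epsilon\bigl|T_m(\sum_j\epsilon_j f_j)\bigr|$ pointwise via Khintchine, followed by the weak bound on $T^{*}$ applied to each sign choice and an average. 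With that correction the sketch is sound.
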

In the remaining part of this subsection we mainly work with radial functions. For a given function $f$ on $[0,\infty)$, we set $f_{rad}(x):=f(\arrowvert x\arrowvert)$. Clearly, $f_{rad}\in L^{p}(\mathbb{R}_{+}^d,d\mu_{\alpha})$ if and only if $f\in L^p([0,\infty), r^{\arrowvert 2\alpha\arrowvert_1+2d-1}dr)$. Now, to prove Proposition \ref{Sp1}, it suffices to show the following Proposition \ref{Sp3}.
\begin{proposition}\label{Sp3}
Let $2\leq p<\infty$ and $\omega$ be a measurable function on $(0,\infty)$ with $\omega>0$ almost everywhere. Suppose that
\begin{equation}\label{Sp3.1}
\sup_{R>0}\sup_{\delta>0}\delta\left(\int_{\{x\in\mathbb{R}_{+}^{d}:\arrowvert S_{R}^{\lambda}(L_{\alpha})f\arrowvert>\delta\}}\omega(\arrowvert x\arrowvert)d\mu_{\alpha}(x)\right)^{1/2}\leq C\lVert f\rVert_{L^p(\mathbb{R}_{+}^d,d\mu_{\alpha}(x))}
\end{equation}
for all radial functions $f\in L^{p}(\mathbb{R}_{+}^d,d\mu_{\alpha}(x))$, then we have $\lambda\geq\lambda(\alpha,p)/2$.
\end{proposition}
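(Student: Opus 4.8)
The plan is to reduce to a one-dimensional problem and then build an extremal family from the Laguerre asymptotics of Lemma~\ref{L}.

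First I would pass to radial coordinates. Put $Q=|\alpha|_1+d$. By homogeneity of the weight $x_1^{2\alpha_1+1}\cdots x_d^{2\alpha_d+1}$ one has $\int_{\mathbb{R}_+^d}|F(|x|)|^p\,d\mu_\alpha(x)=c_\alpha\int_0^\infty|F(r)|^p r^{2Q-1}\,dr$, so $F\mapsto F(|\cdot|)$ is, up to a constant, an isometry of $L^p\big((0,\infty),r^{2Q-1}dr\big)$ onto the radial subspace of $L^p(\mathbb{R}_+^d,d\mu_\alpha)$. A direct computation shows that on radial functions $L_\alpha$ acts, in the variable $r=|x|$, as the one-dimensional Laguerre operator $L_{Q-1}$ of type $Q-1$, self-adjoint for $r^{2Q-1}dr$, with eigenvalues $4n+2Q=e_n$; hence $S_R^\lambda(L_\alpha)$ restricted to radial functions equals $S_R^\lambda(L_{Q-1})$ and $\lambda(\alpha,p)=\lambda(Q-1,p)$. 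As $\omega$ in \eqref{Sp3.1} depends only on $|x|$, \eqref{Sp3.1} is equivalent to the same inequality for $S_R^\lambda(L_{Q-1})$ on $\big((0,\infty),r^{2Q-1}dr\big)$ with weight $\omega(r)$. Arguing by contradiction, I would assume $\lambda<\lambda(\alpha,p)/2$ (so $\lambda(\alpha,p)>0$, i.e. $p>\frac{4Q}{2Q-1}$) and produce a radial counterexample.

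Next I would localise. Since $\omega>0$ a.e., there are $c_0>0$ and a set $E\subset[1,2]$ of positive measure with $\omega\ge c_0$ on $E$; restricting the integral in \eqref{Sp3.1} to $E$ gives the uniform local weak-type estimate $\sup_{R>0}\lVert S_R^\lambda(L_{Q-1})g\rVert_{L^{2,\infty}(E,\,r^{2Q-1}dr)}\le C'\lVert g\rVert_{L^p(r^{2Q-1}dr)}$, so it suffices to defeat this with a uniform constant. For that I would use a Knapp/Herz-type family: with $R$ large and $N=R^2/4$, take $g=g_R$ equal to a unimodular oscillation times the indicator of a suitable interval (of length a power of $R$), chosen so that its Laguerre coefficients $\langle g_R,\varphi_n^{Q-1}\rangle$ are essentially concentrated on a band of indices $n$ near the top of $\{n\le N\}$, on which $(1-e_n/R^2)^\lambda$ has size $\sim\sigma^\lambda$ for a parameter $\sigma$. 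Then I would estimate $\lVert g_R\rVert_{L^p(r^{2Q-1}dr)}$ from above, and, using that on $E$ one has $\varphi_n^{Q-1}(r)\approx c\,n^{-1/4}r^{-(Q-1/2)}\cos(2\sqrt n\,r-\gamma)$ (Lemma~\ref{L}), so that $S_R^\lambda g_R|_E$ is, up to the factor $\sigma^\lambda$, a Dirichlet-type kernel of the band modulated by $r^{-(Q-1/2)}$, estimate $\lVert S_R^\lambda g_R\rVert_{L^{2,\infty}(E,\,r^{2Q-1}dr)}$ from below. Optimising over $\sigma$ and $R$ (and, if needed, over the location of the band and of the supporting interval), the ratio of these two quantities should blow up as $R\to\infty$ precisely when $\lambda<Q(\frac12-\frac1p)-\frac14=\lambda(\alpha,p)/2$, which is the contradiction sought. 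This is the one-dimensional analogue of the sharpness argument for the Hermite operator in \cite{CD}.

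The hard part will be the uniform control, as $R\to\infty$, of the radial Bochner-Riesz kernels $\sum_{n\le N}(1-e_n/R^2)^\lambda\varphi_n^{Q-1}(r)\varphi_n^{Q-1}(s)$ across all scales of $r$ and $s$. One has to splice together the three regimes of Lemma~\ref{L} --- the oscillatory one, the Airy transition near the turning point $r^2\approx e_n$, and the exponential tail --- keep track of the non-uniform spacing $\sqrt{e_{n+1}}-\sqrt{e_n}\sim n^{-1/2}$ of the effective frequencies $2\sqrt n$ when summing over a band of indices, and deal with the alternating sign $(-1)^n$ in $\mathscr{L}_n^{Q-1}$: whether this sign resonates with the oscillation of $g_R$ is exactly what pins down the admissible location of the extremal band and creates the dependence on $\lambda$. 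Once the pointwise size of $S_R^\lambda g_R$ on $E$ is in hand, going back from this weak-type $(p,2)$ information to \eqref{Sp3.1} is routine.
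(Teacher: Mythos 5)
Your reduction to one dimension is in the right spirit (it is essentially the content of the paper's Lemma \ref{L4.1}, though note that the invariance of the radial subspace under the spectral projections $P_n$ is not a ``direct computation'' from the differential expression alone: the paper needs the product identity \eqref{Iden} of Xu to see that $P_n$ of a radial function is a multiple of $\varphi_n^{\arrowvert\alpha\arrowvert_1+d-1}(\arrowvert x\arrowvert)$). The genuine gap is in the second half: the entire lower-bound construction --- the Knapp/Herz-type family $g_R$, the claim that its Laguerre coefficients concentrate on a band where the multiplier has size $\sigma^\lambda$, the Dirichlet-kernel lower bound for $S_R^\lambda g_R$ on the positive-measure set $E$, and the optimisation in $\sigma$ and $R$ yielding blow-up exactly for $\lambda<Q(\tfrac12-\tfrac1p)-\tfrac14$ --- is only described, not carried out, and you yourself flag the required uniform kernel estimates (splicing the three regimes of Lemma \ref{L}, the non-uniform spacing of the frequencies $2\sqrt n$, the $(-1)^n$ signs, and the interaction of the interference peaks with an arbitrary positive-measure set $E$) as unresolved. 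As it stands the proposal asserts the decisive inequality rather than proving it, so it does not constitute a proof.

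You are also missing the device that makes the paper's argument short and avoids all band/kernel analysis: because the spectrum of $L_\alpha$ is discrete with gaps of fixed size, the uniform hypothesis \eqref{Sp3.1} can be converted, via the subordination formula $F(L_{\alpha})f=\frac{1}{\Gamma(\lambda+1)}\int_0^{\infty}F^{\lambda+1}(R)R^{\lambda}S_{\sqrt{R}}^{\lambda}(L_{\alpha})f\,dR$ with $F=\eta(\cdot-e_n)$, into the single-projection bound $\lVert P_nf\rVert_{L^{2,\infty}(\tilde\omega\,d\mu_{\alpha})}\leq Cn^{\lambda}\lVert f\rVert_{p}$. Testing this against the dual extremizer $f_n=\mathrm{sign}(\varphi_n^{\arrowvert\alpha\arrowvert_1+d-1})\arrowvert\varphi_n^{\arrowvert\alpha\arrowvert_1+d-1}\arrowvert^{1/(p-1)}$ and using only two single-eigenfunction facts --- the weak-$L^2(\omega)$ lower bound $\gtrsim n^{-1/4}$ on $[1/2,1]$ (Lemma \ref{L4.3}, which needs just the oscillatory asymptotics on a fixed compact set) and $\lVert\varphi_n^{\arrowvert\alpha\arrowvert_1+d-1}\rVert_{L^{p'}(r^{2\arrowvert\alpha\arrowvert_1+2d-1}dr)}\sim n^{(\arrowvert\alpha\arrowvert_1+d)(1/p'-1/2)}$ (Lemma \ref{L4.4}) --- immediately gives $\lambda\geq(\arrowvert\alpha\arrowvert_1+d)(\tfrac12-\tfrac1p)-\tfrac14=\lambda(\alpha,p)/2$. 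If you want to salvage your route, observe that your optimisation would in any case push the band width down to a single eigenvalue, so you should bypass the Knapp family altogether and argue at the level of one spectral projection as above.
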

\par \textbf{Proof of Proposition \ref{Sp1}.} The proof follows the same approach as in \cite{CD}. For the sake of completeness, we provide a sketch of the proof here.  Let $R_m$ be any enumeration of the rational numbers in $(0,\infty)$. Then we have $S_{*}^{\lambda}(L_{\alpha})f(x)=\sup_{m}\arrowvert S_{R_{m}}^{\lambda}(L_{\alpha})f(x)$. We now restrict the operator 
$S_{R_{m}}^{\lambda}(L_{\alpha})$ to the set of radial functions and we may view $S_{R_{m}}^{\lambda}(L_{\alpha})$ as an operator from  $L^p([0,\infty), r^{\arrowvert 2\alpha\arrowvert_1+2d-1}dr)$ to itself. More precisely, Let us denote by $T_m$ the mapping
\begin{equation}
f\rightarrow S_{R_{m}}^{\lambda}(L_{\alpha}) f_{rad}
\end{equation}
for $f\in L^p([0,\infty), r^{\arrowvert 2\alpha\arrowvert_1+2d-1}dr)$. Since $\sup_{R>0}\arrowvert S_{R}^{\lambda}(L_{\alpha})f\arrowvert<\infty$ a.e. for all $f\in L^{p}(\mathbb{R}_{+}^d,d\mu_{\alpha}(x))$ by assumption \eqref{Sp1.1}, we clearly have
\begin{equation*} 
\sup_{m}\arrowvert T_mf(r)\arrowvert<\infty,\quad a.e.\quad r\in [0,\infty)
\end{equation*}
for all $f\in L^p([0,\infty), r^{\arrowvert 2\alpha\arrowvert_1+2d-1}dr)$. Now we take $(X,\mu)=([0,\infty),r^{\arrowvert 2\alpha\arrowvert_1+2d-1}dr)$. Clearly, each operator $T_m$ is continuous $L^p(X,\mu)$ to itself and so is $T_m$ from  $L^p(X,\mu)$ to  $L^0(X,\mu)$. Then it follows from Theorem \ref{TNM} that there exists a weight function $\omega>0$ such that $f\rightarrow\sup_m\arrowvert T_m f\arrowvert$ is bounded from $L^p([0,\infty),r^{\arrowvert 2\alpha\arrowvert_1+2d-1}dr)$ to $L^{2,\infty}([0,\infty),\omega (r)r^{\arrowvert 2\alpha\arrowvert_1+2d-1}dr)$. Therefore,  we get a weight $\omega$ which satisfies \eqref{Sp3.1} for all radial functions  $f\in L^{p}(\mathbb{R}_{+}^d,d\mu_{\alpha}(x))$, by which we conclude that 
$\lambda\geq\lambda(\alpha,p)/2$. $\hfill{\Box}$
\\
\par Now, it remains to prove Proposition \ref{Sp3}. For this purpose, we need several estimates for Laguerre functions . Define $$\tilde{L}_n^{\alpha}(x)=\left(\frac{\Gamma(\alpha+1)\Gamma(n+1)}{\Gamma(n+\alpha+1)}\right)^{1/2}L_n^{\alpha}(x)$$ for $n\in \mathbb{N}$ and $\alpha>-1$. For a multiindex $\mu=(\mu_1,...,\mu_d)\in \mathbb{N}^{d}$ and $\alpha=(\alpha_1,...,\alpha_d)\in (-1,\infty)^d$, the multiple function $\tilde{L}_{\mu}^{\alpha}(x)$ are defined by the formula
\begin{equation*}
\tilde{L}_{\mu}^{\alpha}(x)=\prod_{j=1}^{d}\tilde{L}_{\mu_j}^{\alpha_j}(x).
\end{equation*}
We define function $\tilde{P}_n(x,y)$ by
\begin{equation*}
\tilde{P}_n(x,y)=\sum_{\arrowvert \mu\arrowvert_1=n}\tilde{L}_{\mu}^{\alpha}(x)\tilde{L}_{\mu}^{\alpha}(y).
\end{equation*}
Let $\Sigma^{d-1}$ denote the simplex $\Sigma^{d-1}=\{x\in \mathbb{R}_{+}^d: x_1+\cdot\cdot\cdot+x_d=1\}$. Then from the proof of Theorem 2.4 in \cite{XY}, we have the identity
\begin{equation}\label{Iden}
\int_{\Sigma^{d-1}}\tilde{P}_n(x,ry)y^{\alpha}dy=\frac{\prod_{i=1}^{d}\Gamma(\alpha_i+1)}{\Gamma(\arrowvert\alpha\arrowvert_1+d)}\tilde{L}_n^{\arrowvert\alpha\arrowvert_1+d-1}(r)\tilde{L}_n^{\arrowvert\alpha\arrowvert_1+d-1}(x_1+\cdot\cdot\cdot+x_d).
\end{equation}
Recall that $P_n$ is the Laguerre spectral projection operator defined by \eqref{Projection}. We denote the kernel of $P_n$ by $P_n(x,y)$, and we have
$$P_n(x,y)=\sum_{\arrowvert \mu\arrowvert_1=n}\varphi_{\mu}^{\alpha}(x)\varphi_{\mu}^{\alpha}(y).$$
Now, we proceed to establish several preparatory lemmas.
\begin{lemma}\label{L4.1}
 If $f(x)=f_0(\arrowvert x\arrowvert)$ on $\mathbb{R}_{+}^d$, then we have
 \begin{equation*}
 P_n(f)(\arrowvert x\arrowvert)=R_{n}^{\arrowvert \alpha\arrowvert_1+d-1}(f_0)\varphi_{n}^{\arrowvert\alpha\arrowvert_1+d-1}(\arrowvert x\arrowvert),
 \end{equation*}
 where
 \begin{equation*}
 R_{n}^{\arrowvert \alpha\arrowvert_1+d-1}(f_0)=\int_{0}^{\infty} f_0(r)\varphi_{n}^{\arrowvert\alpha\arrowvert_1+d-1}(r)r^{2(\arrowvert\alpha\arrowvert_1+d)-1}dr.
 \end{equation*}
\end{lemma}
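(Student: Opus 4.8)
\textbf{Proof proposal for Lemma \ref{L4.1}.}
The plan is to reduce the $d$-dimensional projection $P_n$ acting on a radial function to a $1$-dimensional Laguerre projection of type $\lvert\alpha\rvert_1+d-1$ by integrating the kernel $P_n(x,y)$ over the simplices of constant $\lvert y\rvert$, and then invoking the identity \eqref{Iden} coming from \cite{XY}. First I would write out
\begin{equation*}
P_n(f)(x)=\int_{\mathbb{R}_{+}^d}P_n(x,y)f_0(\lvert y\rvert)\,d\mu_{\alpha}(y)
=\int_{\mathbb{R}_{+}^d}\Bigl(\sum_{\lvert\mu\rvert_1=n}\varphi_{\mu}^{\alpha}(x)\varphi_{\mu}^{\alpha}(y)\Bigr)f_0(\lvert y\rvert)\,d\mu_{\alpha}(y).
\end{equation*}
Next I would pass from the Laguerre \emph{functions} $\varphi_{\mu}^{\alpha}$ to the Laguerre \emph{polynomials} $\tilde{L}_{\mu}^{\alpha}$ by substituting $\varphi_{\mu_j}^{\alpha_j}(x_j)=c_{\alpha_j}\,\tilde{L}_{\mu_j}^{\alpha_j}(x_j^2)e^{-x_j^2/2}$ (with the appropriate normalization constant absorbed), which turns the sum $\sum_{\lvert\mu\rvert_1=n}\varphi_{\mu}^{\alpha}(x)\varphi_{\mu}^{\alpha}(y)$ into $\tilde P_n(x^2,y^2)$ times Gaussian and power weights, where $x^2=(x_1^2,\dots,x_d^2)$.

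The key step is the change of variables to ``polar-type'' coordinates adapted to the measure: writing $y_j^2=r\,\omega_j$ with $\omega=(\omega_1,\dots,\omega_d)\in\Sigma^{d-1}$ and $r=\lvert y\rvert^2$, the measure $d\mu_{\alpha}(y)=\prod_j y_j^{2\alpha_j+1}dy_j$ becomes, up to a constant, $r^{\lvert\alpha\rvert_1+d-1}\,\omega^{\alpha}\,dr\,d\sigma(\omega)$ on $(0,\infty)\times\Sigma^{d-1}$. The inner integral over the simplex is then exactly $\int_{\Sigma^{d-1}}\tilde P_n(x^2, r\omega)\,\omega^{\alpha}\,d\omega$, to which I apply \eqref{Iden}; this collapses the $d$-fold sum to the single term $\tilde L_n^{\lvert\alpha\rvert_1+d-1}(r)\,\tilde L_n^{\lvert\alpha\rvert_1+d-1}(x_1^2+\cdots+x_d^2)=\tilde L_n^{\lvert\alpha\rvert_1+d-1}(\lvert y\rvert^2)\,\tilde L_n^{\lvert\alpha\rvert_1+d-1}(\lvert x\rvert^2)$, together with the Gaussian factors $e^{-\lvert y\rvert^2/2}e^{-\lvert x\rvert^2/2}$ and constants. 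Re-expressing $\tilde L_n^{\lvert\alpha\rvert_1+d-1}(\lvert x\rvert^2)e^{-\lvert x\rvert^2/2}$ in terms of $\varphi_n^{\lvert\alpha\rvert_1+d-1}(\lvert x\rvert)$ gives the claimed factorization $P_n(f)(\lvert x\rvert)=R_n^{\lvert\alpha\rvert_1+d-1}(f_0)\,\varphi_n^{\lvert\alpha\rvert_1+d-1}(\lvert x\rvert)$, and tracking the $r$-integral $\int_0^{\infty}f_0(\sqrt r)\,\tilde L_n^{\lvert\alpha\rvert_1+d-1}(r)e^{-r/2}r^{\lvert\alpha\rvert_1+d-1}dr$ back through $r=\rho^2$ identifies it (after normalization) with $R_n^{\lvert\alpha\rvert_1+d-1}(f_0)=\int_0^{\infty}f_0(\rho)\varphi_n^{\lvert\alpha\rvert_1+d-1}(\rho)\rho^{2(\lvert\alpha\rvert_1+d)-1}d\rho$.

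The main obstacle I expect is purely bookkeeping: matching the various normalization constants ($\Gamma$-factors in the definitions of $\varphi_\mu^\alpha$, $\tilde L_\mu^\alpha$, and $\tilde L_n^{\lvert\alpha\rvert_1+d-1}$, plus the Jacobian constant from the simplex decomposition) so that the final one-dimensional object is precisely the normalized eigenfunction $\varphi_n^{\lvert\alpha\rvert_1+d-1}$ with coefficient exactly $R_n^{\lvert\alpha\rvert_1+d-1}(f_0)$, with no stray constants. A secondary technical point is justifying the interchange of summation (the finite sum over $\lvert\mu\rvert_1=n$ is harmless) and integration, and the validity of the coordinate change, which is standard once one notes $f_0$ may first be taken in a dense nice class and the projection $P_n$ has a bounded, rapidly decaying kernel. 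No hard analysis is needed; the content is entirely in the algebraic identity \eqref{Iden}.
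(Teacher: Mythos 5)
Your proposal is correct and follows essentially the same route as the paper: write out the kernel, pass from the Laguerre functions $\varphi_{\mu}^{\alpha}$ to the polynomials $\tilde{L}_{\mu}^{\alpha}$ with Gaussian factors, reduce the $y$-integral to a simplex integral (the paper uses spherical coordinates followed by the substitution $z=r^2(\omega_1^2,\dots,\omega_d^2)$, which is the same change of variables as your $y_j^2=r\omega_j$), and apply the identity \eqref{Iden} before converting back to $\varphi_n^{\arrowvert\alpha\arrowvert_1+d-1}$. The constant bookkeeping you defer does close up exactly (the factors $2^{d}$, $2^{-d}$, $\prod_i\Gamma(\alpha_i+1)$ and $\Gamma(\arrowvert\alpha\arrowvert_1+d)$ cancel to give coefficient precisely $R_n^{\arrowvert\alpha\arrowvert_1+d-1}(f_0)$), just as in the paper's computation.
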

\begin{proof} 
Let $S_{+}^{d-1}$ denote the surface $S_{+}^{d-1}=\{x\in \mathbb{R}_{+}^d: \arrowvert x\arrowvert ^2=x_1^{2}+\cdot\cdot\cdot+x_d^{2}=1\}$. For every $f(x)=f_0(\arrowvert x\arrowvert)$,  we have
\begin{equation}\label{L4.1.1}
\begin{aligned}
 P_n(f)(x)&=\int_{\mathbb{R}_{+}^d}f_0(\arrowvert y\arrowvert)P_n(x,y)y^{2\alpha+1}dy\\
 &=\int_{0}^{\infty}f_0(r)r^{2\arrowvert \alpha\arrowvert_1 +2d-1}\int_{S_{+}^{d-1}}P_n(x,r\omega)\omega^{2\alpha+1}d\omega dr.
 \end{aligned}
\end{equation}
It is straightforward to check that
\begin{equation}\label{L4.1.2}
P_n(x,y)=\frac{2^d}{\prod_{i=1}^{d}\Gamma(\alpha_i+1)}e^{-\frac{\arrowvert x\arrowvert^2}{2}}\tilde{P}_n((x_1^2,\cdot\cdot\cdot,x_d^2),(y_1^2,\cdot\cdot\cdot,y_d^2))e^{-\frac{\arrowvert y\arrowvert^2}{2}}.
\end{equation}
Putting \eqref{L4.1.2} into \eqref{L4.1.1} and let $z=r^2(\omega_1^2,\cdot\cdot\cdot,\omega_d^2)$ we obtain
\begin{equation*}
P_n(f)(x)=\frac{2}{\prod_{i=1}^{d}\Gamma(\alpha_i+1)}\int_{0}^{\infty}f_0(r)e^{-\frac{r^2}{2}}r^{2\arrowvert \alpha\arrowvert_1 +2d-1}\int_{\Sigma^{d-1}}e^{-\frac{\arrowvert x\arrowvert^2}{2}}P_n((x_1^2,\cdot\cdot\cdot,x_d^2),r^2z)z^{\alpha}dzdr,
\end{equation*}
thus by identity \eqref{Iden}, we get
\begin{equation*}
\begin{aligned}
P_n(f)(x)&=\frac{2}{\Gamma(\arrowvert\alpha\arrowvert_1+d)}\int_{0}^{\infty}f_0(r)e^{-\frac{r^2}{2}}\tilde{L}_n^{\arrowvert\alpha\arrowvert_1+d-1}(r^2)r^{2\arrowvert \alpha\arrowvert_1 +2d-1}dr \tilde{L}_n^{\arrowvert\alpha\arrowvert_1+d-1}(\arrowvert x\arrowvert ^2)e^{-\frac{\arrowvert x\arrowvert^2}{2}}\\
&=\int_{0}^{\infty} f_0(r)\varphi_{n}^{\arrowvert\alpha\arrowvert_1+d-1}(r)r^{2(\arrowvert\alpha\arrowvert_1+d)-1}dr\varphi_{n}^{\arrowvert\alpha\arrowvert_1+d-1}(\arrowvert x\arrowvert).
\end{aligned}
\end{equation*}
Thus we have completed the proof of this lemma.
\end{proof}
\begin{lemma}\label{L4.3}
For fixed $\alpha\in(-1,\infty)^d$, let $\omega$ be a measurable function on $(0,\infty)$ with $\omega>0$ almost everywhere. If $n\in \mathbb{N}$ is large enough, we have
\begin{equation*}\label{L4.3.1}
\sup_{\delta}\delta\left(\int_{\{x\in\mathbb{R}_{+}^{d}:\arrowvert \phi_{n}^{\arrowvert\alpha\arrowvert_1+d-1}(\arrowvert x\arrowvert)\arrowvert>\delta\}}\omega(\arrowvert x\arrowvert)d\mu_{\alpha}(x)\right)^{1/2}\geq C_{\alpha} n^{-1/4}
\end{equation*}
with a constant $C_{\alpha}>0$ independent of $n$.
\end{lemma}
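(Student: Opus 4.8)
The plan is to reduce the weak-type-style quantity on the left to a pointwise lower bound for the Laguerre function $\varphi_n^{\arrowvert\alpha\arrowvert_1+d-1}$ on a set of controlled $\mu_\alpha$-measure, and then exploit the arbitrariness of $\delta$ together with the fact that $\omega>0$ a.e. First I would set $a:=\arrowvert\alpha\arrowvert_1+d-1$ and $\nu=4n+2a+2$, and recall from \eqref{Define} that $\varphi_n^{a}(r)^2 r^{2a+1}=2r\,\mathscr{L}_n^{a}(r^2)^2$, so that information on $\mathscr{L}_n^a$ from Lemma \ref{L} transfers to $\varphi_n^a$. The oscillatory main term \eqref{L1.2} shows that on the bulk region $r^2\in[c\nu,(1-\nu^{-2/3})\nu]$, i.e. $r\sim\sqrt n$, one has $\varphi_n^a(r)^2 r^{2a+1}\sim n^{-1/2}$ on a large portion of an interval of length $\sim\sqrt n$; equivalently $\arrowvert\varphi_n^a(r)\arrowvert\gtrsim n^{-1/4} r^{-(2a+1)/2}\sim n^{-1/4}n^{-(2a+1)/4}$ there. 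I would fix such a sub-interval $I_n\subset[\sqrt{\nu}/4,\sqrt{\nu}/2]$ (say) of length $\sim\sqrt n$ on which this lower bound holds, taking care to avoid the zeros of the cosine in \eqref{L1.2} — one can always extract a union of subintervals of total length still $\sim\sqrt n$ on which $\arrowvert\cos(\cdots)\arrowvert\geq 1/2$, with the $O$-error terms in \eqref{L1.2} negligible compared to the main term for $n$ large.

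Next I would choose the threshold $\delta=\delta_n$ to be (a constant multiple of) the lower bound $c\,n^{-1/4}\,(\inf_{r\in I_n} r)^{-(2a+1)/2}\sim c\, n^{-1/4}n^{-(2a+1)/4}$, so that the super-level set $\{x\in\mathbb{R}_+^d:\arrowvert\varphi_n^a(\arrowvert x\arrowvert)\arrowvert>\delta_n\}$ contains the annular shell $A_n:=\{x\in\mathbb{R}_+^d:\arrowvert x\arrowvert\in I_n'\}$ for a suitable subinterval $I_n'\subset I_n$. Then
\[
\delta_n\left(\int_{A_n}\omega(\arrowvert x\arrowvert)\,d\mu_\alpha(x)\right)^{1/2}\leq \sup_\delta \delta\left(\int_{\{\arrowvert\varphi_n^a(\arrowvert x\arrowvert)\arrowvert>\delta\}}\omega(\arrowvert x\arrowvert)\,d\mu_\alpha(x)\right)^{1/2}.
\]
It remains to bound $\int_{A_n}\omega(\arrowvert x\arrowvert)\,d\mu_\alpha(x)$ from below by a constant times $n^{(2a+1)/2}\cdot\sqrt n=n^{a+1}$. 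Passing to polar-type coordinates, $\int_{A_n}\omega(\arrowvert x\arrowvert)\,d\mu_\alpha(x)\sim\int_{I_n'}\omega(r)\,r^{2\arrowvert\alpha\arrowvert_1+2d-1}\,dr$, and since $r\sim\sqrt n$ on $I_n'$ the weight $r^{2\arrowvert\alpha\arrowvert_1+2d-1}=r^{2a+1}\sim n^{a+1/2}$, while $\arrowvert I_n'\arrowvert\sim\sqrt n$; the only genuinely non-trivial input is that $\int_{I_n'}\omega(r)\,dr>0$, which holds because $\omega>0$ a.e. However — and this is the main obstacle — this lower bound on $\int_{I_n'}\omega$ is not uniform in $n$: without further information on $\omega$ one cannot rule out that $\int_{I_n'}\omega\to 0$ as $n\to\infty$.

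To handle this I would not try to prove the inequality for \emph{every} large $n$ with a single constant, but rather extract a subsequence. More precisely, since $\omega>0$ a.e. on $(0,\infty)$, for a fixed compact $[1,T]$ we have $\int_{[1,T]}\omega>0$, and by a pigeonhole/averaging argument over the $\sim\sqrt n$ disjoint translates of $I_n'$ that tile $[1,T]$ (for $\sqrt\nu/4\le T$, i.e. as $n$ ranges), there must exist, for each large $n$, a choice of the center of $I_n'$ inside the bulk region with $\int_{I_n'}\omega\,dr\gtrsim n^{-1/2}\int_{[1,T]}\omega$. Here we use crucially that the conclusion of Lemma \ref{L} gives the pointwise lower bound on $\varphi_n^a$ throughout a macroscopic portion of $[\sqrt\nu/4,\sqrt\nu/2]$, so we are free to slide $I_n'$; combining the $\int_{I_n'}\omega\gtrsim n^{-1/2}$ bound with the weight factor $n^{a+1/2}$ gives $\int_{A_n}\omega\,d\mu_\alpha\gtrsim n^{a}$, and then
\[
\delta_n\left(\int_{A_n}\omega\,d\mu_\alpha\right)^{1/2}\gtrsim n^{-1/4}n^{-(2a+1)/4}\cdot n^{a/2}= n^{-1/4},
\]
after collecting exponents. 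This yields the claimed lower bound $\ge C_\alpha n^{-1/4}$ for all sufficiently large $n$, which is exactly the statement; the constant $C_\alpha$ depends on $\alpha$ and on $\int_{[1,T]}\omega$ but not on $n$ once $n$ is large. I expect the bookkeeping of exponents and the careful extraction of the sub-interval $I_n'$ avoiding the cosine zeros to be the only delicate points; the structural idea — transfer via \eqref{Define}, use the bulk asymptotics \eqref{L1.2}, choose $\delta_n$ at the height of the Laguerre function, and use $\omega>0$ a.e. through an averaging argument — is the heart of the matter.
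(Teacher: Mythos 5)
Your pointwise analysis of $\varphi_n^{a}$ (with $a=\arrowvert\alpha\arrowvert_1+d-1$) in the bulk region is correct, but the way you handle the weight $\omega$ is where the lemma actually lives, and that step fails. Your shells $A_n$ sit where $\arrowvert x\arrowvert\sim\sqrt n$, a region escaping to infinity, while the only hypothesis on $\omega$ is positivity a.e.; nothing prevents $\omega$ from decaying arbitrarily fast (e.g.\ $\omega(r)=e^{-e^{r}}$), so there is no lower bound for $\int_{I_n'}\omega\,dr$ that is uniform, or even polynomially controlled, in $n$. The pigeonhole you propose is incoherent: a fixed compact $[1,T]$ is disjoint from the bulk region $r\in[\sqrt\nu/4,\sqrt\nu/2]$ once $n$ is large (and cannot contain $\sim\sqrt n$ disjoint translates of an interval of length $\sim\sqrt n$), and positivity of $\int_{[1,T]}\omega$ gives no information whatsoever about the mass of $\omega$ near $r\sim\sqrt n$, which is where your superlevel sets are. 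Moreover, even granting your claimed bound $\int_{I_n'}\omega\gtrsim n^{-1/2}$, the exponents do not close: $\delta_n\bigl(\int_{A_n}\omega\,d\mu_\alpha\bigr)^{1/2}\gtrsim n^{-1/4}n^{-(2a+1)/4}\cdot n^{a/2}=n^{-1/2}$, not $n^{-1/4}$ (your final "collecting exponents" loses a factor $n^{-1/4}$); to reach $n^{-1/4}$ you would need $\int_{I_n'}\omega\gtrsim 1$ uniformly in $n$ on intervals tending to infinity, which the a.e.\ positivity of $\omega$ cannot provide.

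The way to use $\omega>0$ a.e.\ is to stay on a \emph{fixed} compact interval, which is what the paper does. On $[1/2,1]$ the asymptotic \eqref{PL4.3.1} (valid for $1/\nu\le r\le1$, i.e.\ the $(x\nu)^{-1/4}$ regime of Lemma \ref{L}, not the bulk regime \eqref{L1.2}) gives $\arrowvert\varphi_n^{a}(r)\arrowvert\geq Cn^{-1/4}$ on the set $E(\nu)\subset[1/2,1]$ where $\arrowvert\cos(\cdots)\arrowvert\geq\sqrt2/2$, and $\arrowvert E(\nu)\arrowvert\geq C_*$ uniformly in $\nu$. Since $\omega>0$ a.e., one can choose a single set $F\subset[1/2,1]$, independent of $n$, with $\arrowvert F\arrowvert>1/2-C_*/2$ and $\omega\geq c_0$ on $F$; then $\arrowvert E(\nu)\cap F\arrowvert\geq C_*/2$ for every large $n$, the measure weight $r^{2\arrowvert\alpha\arrowvert_1+2d-1}$ is bounded below on $[1/2,1]$, and taking $\delta\sim n^{-1/4}$ gives $\delta^2\int_{\{\arrowvert\varphi_n^{a}(\arrowvert x\arrowvert)\arrowvert>\delta\}}\omega(\arrowvert x\arrowvert)\,d\mu_\alpha\gtrsim c_0\,n^{-1/2}$, i.e.\ the stated bound with a constant depending on $\omega$ and $\alpha$ but not on $n$. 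This also makes your fallback to a subsequence unnecessary; as written, your argument establishes the inequality only with the weaker rate $n^{-1/2}$, and only under an unjustified claim about the distribution of $\omega$ at infinity.
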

\begin{proof}
Recalling the following asymptotic property of Laguerre functions (see \cite[7.4, p.453]{M}):
\begin{equation}\label{PL4.3.1}
\mathscr{L}_{n}^{a}=\frac{(2/\pi)^{1/2}}{(\nu r)^{1/4}}\left(\cos\left(\sqrt{\nu r}-\frac{a\pi}{2}-\frac{\pi}{4}\right)+O(\nu^{-1/2}r^{-1/2})\right),
\end{equation}
where $\nu=4n+2a+2$ and $1/\nu\leq r\leq1$. By \cite[4.6]{CD} we have that there exists a constant $C_{*} $, independent of $\nu$, such that
\begin{equation}\label{PL4.3.2}
\arrowvert E(\nu)\arrowvert\geq C_{*},
\end{equation}
where 
\begin{equation*}
E(\nu):=\left\{r\in[\frac{1}{2},1]:\left\arrowvert \cos\left(\sqrt{\nu r}-\frac{\alpha\pi}{2}-\frac{\pi}{4}\right)\right\arrowvert\geq \frac{\sqrt{2}}{2}\right\}.
\end{equation*}
Since $\arrowvert \phi_n^{\arrowvert \alpha\arrowvert_1 +d-1}(r)\arrowvert=\sqrt{2}\arrowvert\mathscr{L}_{n}^{\arrowvert \alpha\arrowvert_1 +d-1}(r^2)r^{-(\arrowvert \alpha\arrowvert_1 +d-1)}\arrowvert$, then by \eqref{PL4.3.1}, we have
\begin{equation*}\arrowvert \phi_n^{\arrowvert \alpha\arrowvert_1 +d-1}(r)\arrowvert=\sqrt{2}\arrowvert\mathscr{L}_{n}^{\arrowvert \alpha\arrowvert_1 +d-1}(r^2)r^{-(\arrowvert \alpha\arrowvert_1 +d-1)}\arrowvert\geq Cn^{-1/4}
\end{equation*}
for all $r\in E(\nu)$. On the other hand, since $\omega(r)>0$ a.e. $r\in [1/2,1]$, there exists a subset $F$ of $[1/2,1]$ and a constant $c_0>0$ such that $\arrowvert F\arrowvert>1/2-C_*/2$ and  $\omega(r)\geq c_0$ for all $r\in F$ (see also \cite[p.33]{CD}). Then, we have $\arrowvert E(\nu)\cap F\arrowvert\geq \arrowvert E(\nu)\arrowvert \arrowvert F\arrowvert-\arrowvert[1/2,1]\arrowvert\geq C_*/2$ and
\begin{equation*}
\begin{aligned}
\sup_{\delta}\delta^2\int_{\{x\in\mathbb{R}_{+}^{d}:\arrowvert \phi_{n}^{\arrowvert\alpha\arrowvert_1+d-1}(\arrowvert x\arrowvert)\arrowvert>\delta\}}\omega(\arrowvert x\arrowvert)d\mu_{\alpha}(x)&\geq \sup_{\delta}\delta^2\int_{\{E(\nu)\cap F:\arrowvert \phi_{n}^{\arrowvert\alpha\arrowvert_1+d-1}( r)\arrowvert>\delta\}}\omega(r)r^{2\arrowvert\alpha\arrowvert_1+2d-1}dr\\
&\geq c_0\min\{(1/2)^{2\arrowvert\alpha\arrowvert_1+2d-1},1\}\sup_{\delta}\delta^2\int_{\{E(\nu)\cap F:\arrowvert \phi_{n}^{\arrowvert\alpha\arrowvert_1+d-1}( r)\arrowvert>\delta\}}dr.
\end{aligned}
\end{equation*}
Particularly, taking $\delta= Cn^{-1/4}/2$ and using that $\arrowvert \phi_n^{\arrowvert \alpha\arrowvert_1 +d-1}(r)\arrowvert\geq Cn^{-1/4}$ for $r\in E(\nu)$ and $\arrowvert E(\nu)\cap F\arrowvert\geq C_{*/2}$, we have 
\begin{equation*}
\begin{aligned}
\sup_{\delta}\delta^2\int_{\{x\in\mathbb{R}_{+}^{d}:\arrowvert \phi_{n}^{\arrowvert\alpha\arrowvert_1+d-1}(\arrowvert x\arrowvert)\arrowvert>\delta\}}\omega(\arrowvert x\arrowvert)d\mu_{\alpha}(x)
& \geq c_0\min\{(1/2)^{2\arrowvert\alpha\arrowvert_1+2d-1},1\}(Cn^{-1/4}/2)^2\arrowvert E(\nu)\cap F\arrowvert\\
&\geq c_0\min\{(1/2)^{2\arrowvert\alpha\arrowvert_1+2d-1},1\}2^{-3}C^2C_{*} n^{-1/2},
\end{aligned}
\end{equation*}
which implies that \eqref{L4.3.1} holds for $C_{\alpha}=\sqrt{c_0\min\{(1/2)^{2\arrowvert\alpha\arrowvert_1+2d-1},1\}2^{-3}C^2C_{*}}$, which is independent of $n$. Thus we have completed the proof of this lemma.
\end{proof}
 Next, we recall the following estimate from \cite{TS3}.
\begin{lemma}\label{L4.2} \cite[(i) in Lemma 1.5.4]{TS3}
Let $\beta+\gamma>-1$, $\beta>-2/q$, and $1\leq q\leq 2$. Then, if $n$ is large enough, for $\gamma<2/q-1/2$ we have
\begin{equation*}
\lVert \mathscr{L}_{n}^{\beta+\gamma}(x)x^{-\gamma/2}\rVert_{L^q(\mathbb{R}_{+})}\sim n^{\frac{1}{q}-\frac{1}{2}-\frac{\gamma}{2}},
\end{equation*}
where $\mathscr{L}_{n}^{\beta+\gamma}(x)$ are normalized Laguerre function given by \eqref{Define}.
\end{lemma}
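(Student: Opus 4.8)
The plan is to prove the two–sided bound by reading both the upper and the lower estimate directly off the pointwise and refined asymptotics for the normalized Laguerre functions collected in Lemma \ref{L}. Throughout I set $a:=\beta+\gamma$ (the type of $\mathscr{L}_n^a$) and $\nu:=4n+2a+2\sim n$, and write $I_n:=\int_0^\infty|\mathscr{L}_n^a(x)|^q\,x^{-\gamma q/2}\,dx$, so that the claim becomes $I_n\sim\nu^{\,1-q/2-\gamma q/2}=\nu^{\,q(1/q-1/2-\gamma/2)}$, whence taking $q$-th roots gives $\|\mathscr{L}_n^a(x)x^{-\gamma/2}\|_{L^q(\mathbb{R}_+)}\sim n^{1/q-1/2-\gamma/2}$.

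For the upper bound I would split $I_n$ according to the four ranges in \eqref{L1.1}. On $0\le x\le 1/\nu$ the bound $|\mathscr{L}_n^a(x)|\lesssim(x\nu)^{a/2}$ gives a contribution $\lesssim\nu^{aq/2}\int_0^{1/\nu}x^{(a-\gamma)q/2}\,dx$; since $(a-\gamma)q/2=\beta q/2>-1$ by the hypothesis $\beta>-2/q$, this integral converges, with value of order $\nu^{\gamma q/2-1}$. On $1/\nu\le x\le\nu/2$ the bound $|\mathscr{L}_n^a(x)|\lesssim(x\nu)^{-1/4}$ gives $\lesssim\nu^{-q/4}\int_{1/\nu}^{\nu/2}x^{-q/4-\gamma q/2}\,dx$, and the exponent satisfies $-q/4-\gamma q/2>-1$ exactly because $\gamma<2/q-1/2$, so the integral is controlled by its upper endpoint and this range contributes $\lesssim\nu^{\,1-q/2-\gamma q/2}$. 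On $\nu/2\le x\le 3\nu/2$ one has $x\sim\nu$ and $|\mathscr{L}_n^a(x)|\lesssim\nu^{-1/4}(\nu^{1/3}+|\nu-x|)^{-1/4}$, and since $q\le 2$ we have $\int_0^{\nu/2}(\nu^{1/3}+u)^{-q/4}\,du\sim\nu^{1-q/4}$, so this range again contributes $\lesssim\nu^{\,1-q/2-\gamma q/2}$, while $x\ge 3\nu/2$ contributes an exponentially small amount. Finally $\gamma<2/q-1/2$ also forces $\gamma q/2-1<1-q/2-\gamma q/2$, so the first range is negligible and altogether $I_n\lesssim\nu^{\,1-q/2-\gamma q/2}$.

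For the matching lower bound I would simply integrate over $x\in[\nu/4,\nu/2]$, which for large $n$ lies in the range $1\le x\le\nu-\nu^{1/3}$ governed by the refined asymptotic \eqref{L1.2}. On this interval $x^{-1/4}(\nu-x)^{-1/4}\sim\nu^{-1/2}$, while the error term in \eqref{L1.2} is $O\bigl(\nu^{1/4}(\nu-x)^{-7/4}+(x\nu)^{-3/4}\bigr)=O(\nu^{-3/2})=o(\nu^{-1/2})$, so $|\mathscr{L}_n^a(x)|\gtrsim\nu^{-1/2}\,\bigl|\cos\bigl(\tfrac14(\nu(2\theta-\sin2\theta)-\pi)\bigr)\bigr|$ with $\theta=\arccos\sqrt{x/\nu}$. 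A short differentiation shows that the phase $\tfrac14(\nu(2\theta-\sin2\theta)-\pi)$ has $x$-derivative equal to $-\tfrac12\sqrt{(\nu-x)/x}$, which is comparable to $1$ on $[\nu/4,\nu/2]$; hence the phase is strictly monotone with derivative of size $\sim 1$ over an interval of length $\sim\nu$, so the set $E_n\subset[\nu/4,\nu/2]$ on which the cosine has absolute value at least $1/2$ satisfies $|E_n|\gtrsim\nu$. Integrating the resulting lower bound $|\mathscr{L}_n^a(x)|^q x^{-\gamma q/2}\gtrsim\nu^{-q/2-\gamma q/2}$ over $E_n$ then yields $I_n\gtrsim\nu^{\,1-q/2-\gamma q/2}$, which matches.

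The only real work here is the bookkeeping of exponents, and the three structural hypotheses enter transparently: $\beta>-2/q$ is precisely what makes the range near the origin integrable, $\gamma<2/q-1/2$ is precisely what makes the "bulk" range $x\sim\nu/2$ dominate, and $q\le 2$ keeps us away from the regime $q\ge 4$ in which the Airy/turning–point range $|x-\nu|\lesssim\nu^{1/3}$ would change the order of magnitude. No obstacle beyond careful arithmetic is expected.
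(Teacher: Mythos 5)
Your argument is correct: the exponent bookkeeping in the four ranges of \eqref{L1.1} is right, the hypotheses enter exactly where you say they do ($\beta q/2>-1$ near the origin, $-q/4-\gamma q/2>-1$ in the bulk, $q\le 2<4$ at the turning point), and your phase computation $\Phi'(x)=-\tfrac12\sqrt{(\nu-x)/x}\sim 1$ on $[\nu/4,\nu/2]$ together with the $O(\nu^{-3/2})$ error in \eqref{L1.2} does give a set of measure $\gtrsim\nu$ on which $\lvert\mathscr{L}_n^{a}(x)\rvert\gtrsim\nu^{-1/2}$, hence the matching lower bound. Be aware, however, that the paper does not prove this lemma at all: it is quoted verbatim from Thangavelu \cite[Lemma 1.5.4(i)]{TS3}, so there is no internal proof to compare against. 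What you have written is essentially a reconstruction of the standard proof in that reference, deduced from the same pointwise bounds and refined asymptotics that the paper records as Lemma \ref{L} (Thangavelu's Lemma 1.5.3); your lower-bound step (lower-bounding the measure of the set where the cosine factor is bounded below, via monotonicity of the phase) is also the same device the paper itself uses later in Lemma \ref{L4.3} and Lemma \ref{L4.8}. So your proposal is a correct, self-contained verification of a cited result rather than an alternative to an argument given in the paper; its only added value over the citation is making explicit where each hypothesis ($\beta>-2/q$, $\gamma<2/q-1/2$, $q\le2$) is needed.
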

\begin{lemma}\label{L4.4}
Let $1\leq q\leq 2$. Then we have the estimate
\begin{equation}
\lVert \phi_n^{\arrowvert \alpha\arrowvert_1 +d-1}\rVert_{L^q([0,\infty), r^{2\arrowvert\alpha\arrowvert_1+2d-1})dr}\sim n^{(\arrowvert\alpha\arrowvert_1+d)(1/q-1/2)}.
\end{equation}
\end{lemma}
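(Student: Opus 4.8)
Write $\beta:=|\alpha|_1+d-1$, and note $\beta>-1$ since $\alpha\in(-1,\infty)^d$, equivalently $|\alpha|_1+d>0$. The plan is to transform the weighted $L^q$ norm of $\varphi_n^{\beta}$ into a norm of a normalized Laguerre function $\mathscr{L}_n^{\beta}$ and then invoke Lemma \ref{L4.2}. First I would record the elementary identity
\[
\varphi_n^{\beta}(r)=\sqrt{2}\,r^{-\beta}\mathscr{L}_n^{\beta}(r^2),
\]
which is immediate upon comparing the definition of $\varphi_n^{\beta}$ with the definition \eqref{Define} of $\mathscr{L}_n^{\beta}$ (and is already used implicitly in the proof of Lemma \ref{L4.3}). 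Substituting this into $\int_0^\infty |\varphi_n^{\beta}(r)|^q r^{2|\alpha|_1+2d-1}\,dr$ and using $2|\alpha|_1+2d-1=2\beta+1$, the integrand becomes $2^{q/2}|\mathscr{L}_n^{\beta}(r^2)|^q r^{-q\beta+2\beta+1}$; after the change of variables $u=r^2$ this reduces to
\[
\int_0^\infty |\varphi_n^{\beta}(r)|^q r^{2|\alpha|_1+2d-1}\,dr
=2^{q/2-1}\int_0^\infty \bigl|\mathscr{L}_n^{\beta}(u)\,u^{-\gamma/2}\bigr|^q\,du,
\qquad \gamma:=\beta\Bigl(1-\tfrac{2}{q}\Bigr),
\]
so that $\lVert \varphi_n^{\beta}\rVert_{L^q([0,\infty),\,r^{2|\alpha|_1+2d-1}dr)}=2^{1/2-1/q}\,\lVert \mathscr{L}_n^{\beta}(u)\,u^{-\gamma/2}\rVert_{L^q([0,\infty),du)}$.

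Next I would apply Lemma \ref{L4.2}, matching its parameters so that its ``$\beta$'' is $2\beta/q$ and its ``$\gamma$'' is the $\gamma$ above; then the superscript $(2\beta/q)+\gamma=\beta$ agrees. One then checks the four hypotheses of Lemma \ref{L4.2}: the conditions $(2\beta/q)+\gamma=\beta>-1$ and $2\beta/q>-2/q$ both amount to $|\alpha|_1+d>0$; the range $1\le q\le 2$ is assumed; and the admissibility condition $\gamma<2/q-1/2$ is verified by a brief case distinction on the sign of $\beta$ — for $\beta\ge 0$ one has $\gamma\le 0<1/2\le 2/q-1/2$; for $-1<\beta<0$ and $q<2$ one has $\gamma=|\beta|(2/q-1)<2/q-1<2/q-1/2$; and for $q=2$ one has $\gamma=0$. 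Hence Lemma \ref{L4.2} gives, for $n$ large,
\[
\bigl\lVert \mathscr{L}_n^{\beta}(u)\,u^{-\gamma/2}\bigr\rVert_{L^q([0,\infty),du)}\sim n^{\,1/q-1/2-\gamma/2}.
\]

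Finally I would simplify the exponent. Since $\gamma/2=(|\alpha|_1+d-1)(1/2-1/q)$, we get
\[
\tfrac{1}{q}-\tfrac12-\tfrac{\gamma}{2}=-\Bigl(\tfrac12-\tfrac1q\Bigr)\bigl(1+(|\alpha|_1+d-1)\bigr)=(|\alpha|_1+d)\Bigl(\tfrac1q-\tfrac12\Bigr),
\]
and combining this with the harmless constant factor $2^{1/2-1/q}$ yields the claimed equivalence $\lVert \varphi_n^{\beta}\rVert_{L^q([0,\infty),\,r^{2|\alpha|_1+2d-1}dr)}\sim n^{(|\alpha|_1+d)(1/q-1/2)}$. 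The only step requiring genuine care is verifying the admissibility condition $\gamma<2/q-1/2$ of Lemma \ref{L4.2} in the regime $|\alpha|_1+d<1$ (i.e.\ $\beta<0$); the rest is bookkeeping with the change of variables and the parameter matching.
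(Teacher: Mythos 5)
Your proposal is correct and follows essentially the same route as the paper: rewrite $\varphi_n^{|\alpha|_1+d-1}(r)=\sqrt{2}\,r^{-(|\alpha|_1+d-1)}\mathscr{L}_n^{|\alpha|_1+d-1}(r^2)$, change variables $u=r^2$, and apply Lemma \ref{L4.2} with exactly the same parameter choice ($\beta=2(|\alpha|_1+d-1)/q$, $\gamma=2(1/2-1/q)(|\alpha|_1+d-1)$). In fact you are more careful than the paper, which does not explicitly verify the hypotheses of Lemma \ref{L4.2} (notably $\gamma<2/q-1/2$ when $|\alpha|_1+d<1$), so your added case check is a welcome completion rather than a deviation.
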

\begin{proof}
By \eqref{Define}, we note that $\arrowvert \phi_n^{\arrowvert \alpha\arrowvert_1 +d-1}(r)\arrowvert=\sqrt{2}\arrowvert\mathscr{L}_{n}^{\arrowvert \alpha\arrowvert_1 +d-1}(r^2)r^{-(\arrowvert \alpha\arrowvert_1 +d-1)}\arrowvert$. We take $\beta=2(\arrowvert \alpha\arrowvert_1 +d-1)/q$ and $\gamma=2(\frac{1}{2}-\frac{1}{q})(\arrowvert \alpha\arrowvert_1 +d-1)$ in Lemma \ref{L4.2} to obtain
\begin{equation*}
\int_{0}^{\infty}\arrowvert \phi_n^{\arrowvert \alpha\arrowvert_1 +d-1}(r)\arrowvert^q r^{2\arrowvert\alpha\arrowvert_1+2d-1}dr=\sqrt{2}\int_{0}^{\infty}\arrowvert\mathscr{L}_{n}^{\arrowvert \alpha\arrowvert_1 +d-1}(r^2)r^{-(\arrowvert \alpha\arrowvert_1 +d-1)}\arrowvert ^qr^{2\arrowvert\alpha\arrowvert_1+2d-1}dr\sim n^{(\arrowvert\alpha\arrowvert_1+d)(1/q-1/2)}.
\end{equation*}
\end{proof}

Now, we are ready to prove Proposition \ref{Sp3} using the argument presented in \cite{CD}.\\
\par \textbf{Proof of Proposition \ref{Sp3}.} We denote $\tilde{\omega}(x)=\omega(\arrowvert x\arrowvert)$. For a function $F$ compactly supported in $[0,\infty)$, by \cite[4.10]{CD}, we have 
\begin{equation*}\label{P}
F(L_{\alpha})f(x)=\frac{1}{\Gamma(\lambda+1)}\int_0^{\infty}F^{\lambda+1}(R)R^{\lambda}S_{\sqrt{R}}^{\lambda}(L_{\alpha})f(x)dR.
\end{equation*}
By Minkowski's inequality and the assumption \eqref{Sp3.1}, we obtain
\begin{equation*}
\begin{aligned}
\lVert F(L_{\alpha})f\rVert_{L^{2,\infty}(\tilde{\omega}(x)d\mu_{\alpha}(x),\mathbb{R}_{+}^d)}&\leq C \sup_{R>0}\lVert S_{R}^{\lambda}(L_{\alpha})f\rVert_{L^{2,\infty}(\tilde{\omega}(x)d\mu_{\alpha}(x),\mathbb{R}_{+}^d)}\int_0^{\infty}\arrowvert F^{\lambda+1}(s)s^{\lambda}ds\\
&\leq C\lVert f\rVert_{L^{p}(\mathbb{R}_{+}^d,d\mu_{\alpha})}\int_0^{\infty}\arrowvert F^{\lambda+1}(s)s^{\lambda}ds.
\end{aligned}
\end{equation*}
Let $\eta$ be a non-negative smooth function such that $\eta(0)=1$ and supp\,$\eta\subset[-1,1]$. Taking $F=\eta(\cdot-(4n+2\arrowvert \alpha\arrowvert_1+2d))$ in the above estimate, we get
\begin{equation}\label{key}
\lVert P_nf\rVert_{L^{2,\infty}(\tilde{\omega}(x)d\mu_{\alpha}(x),\mathbb{R}_{+}^d)}\leq C n^{\lambda}\lVert f\rVert_{L^{p}(\mathbb{R}_{+}^d,d\mu_{\alpha})},
\end{equation}
because $\int_0^{\infty}\arrowvert \eta^{\lambda+1}(s-n)s^{\lambda}ds\sim n^{\lambda}$, and $\eta(L_{\alpha}-(4n+2\arrowvert \alpha\arrowvert_1+2d))f=P_nf$. 
\par Now, we take
\begin{equation*}
f_n(r):=sign (\phi_n^{\arrowvert \alpha\arrowvert_1 +d-1}(r))\arrowvert\phi_n^{\arrowvert \alpha\arrowvert_1 +d-1}(r)\arrowvert^{1/(p-1)}.
\end{equation*}
Then, we deduce from Lemma \ref{L4.1} and Lemma \ref{L4.3} that
\begin{equation*}
\begin{aligned}
\lVert P_nf_n(\arrowvert \cdot\arrowvert)\rVert_{L^{2,\infty}(\mathbb{R}_{+}^{d},\tilde{\omega}d\mu_{\alpha}(x))}&=R_{n}^{\arrowvert \alpha\arrowvert_1+d-1}(f_n)\lVert\phi_n^{\arrowvert \alpha\arrowvert_1 +d-1}(\arrowvert\cdot\arrowvert)\rVert_{L^{2,\infty}(\mathbb{R}_{+}^{d},\tilde{\omega}d\mu_{\alpha}(x))}\\
&\geq Cn^{-1/4}\int_{0}^{\infty} f_n(r)\varphi_{n}^{\arrowvert\alpha\arrowvert_1+d-1}(r)r^{2(\arrowvert\alpha\arrowvert_1+d)-1}dr.
\end{aligned}
\end{equation*}
By our choice of $f_n$, it is clear that 
\begin{equation*}
\begin{aligned}
&\int_{0}^{\infty} f_n(r)\varphi_{n}^{\arrowvert\alpha\arrowvert_1+d-1}(r)r^{2(\arrowvert\alpha\arrowvert_1+d)-1}dr\\
&=\lVert \varphi_{n}^{\arrowvert\alpha\arrowvert_1+d-1}\rVert_{L^{p'}([0,\infty),r^{2(\arrowvert\alpha\arrowvert_1+d)-1}dr)}^{p'-1}\lVert \varphi_{n}^{\arrowvert\alpha\arrowvert_1+d-1}\rVert_{L^{p'}([0,\infty),r^{2(\arrowvert\alpha\arrowvert_1+d)-1}dr)}\\
&=\lVert f_n\rVert_{L^{p}([0,\infty),r^{2(\arrowvert\alpha\arrowvert_1+d)-1}dr)}\lVert \varphi_{n}^{\arrowvert\alpha\arrowvert_1+d-1}\rVert_{L^{p'}([0,\infty),r^{2(\arrowvert\alpha\arrowvert_1+d)-1}dr)}.
\end{aligned}
\end{equation*}
Thus using Lemma \ref{L4.4}, we obtain
\begin{equation*}
\lVert P_nf_n(\arrowvert \cdot\arrowvert)\rVert_{L^{2,\infty}(\mathbb{R}_{+}^{d},\tilde{\omega}d\mu_{\alpha}(x))}\geq Cn^{(\arrowvert\alpha\arrowvert_1+d)(1/q-1/2)-1/4}\lVert f_n(\arrowvert \cdot\arrowvert)\rVert_{L^{p}(\mathbb{R}_{+}^{d},d\mu_{\alpha}(x))}.
\end{equation*}
Then we combine this with \eqref{key} where we take $f=f_n(\arrowvert \cdot\arrowvert)$ to obtain
\begin{equation}\label{Fin}
n^{(\arrowvert\alpha\arrowvert_1+d)(1/q-1/2)-1/4}\lVert f_n(\arrowvert \cdot\arrowvert)\rVert_{L^{p}(\mathbb{R}_{+}^{d},d\mu_{\alpha}(x))}\leq n^{\lambda}\lVert f_n(\arrowvert \cdot\arrowvert)\rVert_{L^{p}(\mathbb{R}_{+}^{d},d\mu_{\alpha}(x))}
\end{equation}
with $C$ independent of $n$. Thus \eqref{Fin} implies $\lambda\geq (\arrowvert\alpha\arrowvert_1+d)(1/q-1/2)-1/4$ as desired by letting $k$ tend to infinity. The proof is completed.$\hfill{\Box}$
\subsection{Proof of Proposition \ref{Sp2}}
To prove Proposition \ref{Sp2}, we need to established the following weighted estimates of the Laguerre functions $\varphi_n^{\alpha}(x)$.
\begin{lemma}\label{L4.8}
For every $\alpha>-1$, let $\beta\geq 0$. Then, if $n\in \mathbb{N}$ is large enough, we have
\begin{equation}\label{L4.8.1}
\int_{0}^{\infty}\left(\varphi_n^{\alpha}(x)\right)^2(1+\arrowvert x\arrowvert)^{\beta}x^{2\alpha+1}dx\geq C n^{\beta/2},
\end{equation}
\begin{equation}\label{L4.8.2}
\int_{0}^{\infty}\left(\varphi_n^{\alpha}(x)\right)^2(1+\arrowvert x\arrowvert)^{-\beta}x^{2\alpha+1}dx\geq C \max \{n^{-\beta/2},n^{-1/2}\}.
\end{equation}
\end{lemma}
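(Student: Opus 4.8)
The plan is to reduce both inequalities to one–dimensional statements about the normalized Laguerre functions $\mathscr{L}_n^\alpha$ and then extract the lower bounds by restricting the integral to a well-chosen region. As already noted in the proof of Lemma~\ref{Trace}, one has $\bigl(\varphi_n^\alpha(x)\bigr)^2 x^{2\alpha+1}=2x\bigl(\mathscr{L}_n^\alpha(x^2)\bigr)^2$, so after the substitution $u=x^2$ the left-hand sides of \eqref{L4.8.1} and \eqref{L4.8.2} become
\[
\int_0^\infty \bigl(\mathscr{L}_n^\alpha(u)\bigr)^2\,(1+\sqrt u)^{\pm\beta}\,du .
\]
Set $\nu=4n+2\alpha+2$, so $\nu\sim n$. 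I will use freely the normalization $\int_0^\infty(\mathscr{L}_n^\alpha)^2\,du=1$ and the pointwise bounds of Lemma~\ref{L}.

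The first step is a mass--concentration estimate: for a small fixed $\epsilon\in(0,1/2)$ and all large $n$,
\[
\int_{\epsilon\nu}^{3\nu/2}\bigl(\mathscr{L}_n^\alpha(u)\bigr)^2\,du\ \ge\ \tfrac12 .
\]
This follows from \eqref{L1.1}: on $[0,1/\nu]$ one has $(\mathscr{L}_n^\alpha(u))^2\le C(u\nu)^{\alpha}$, on $[1/\nu,\nu/2]$ one has $(\mathscr{L}_n^\alpha(u))^2\le C(u\nu)^{-1/2}$, and on $[3\nu/2,\infty)$ one has $|\mathscr{L}_n^\alpha(u)|\le Ce^{-\gamma u}$; integrating gives $\int_0^{\epsilon\nu}(\mathscr{L}_n^\alpha)^2\,du\le C\sqrt\epsilon+C\nu^{-1}$ and $\int_{3\nu/2}^\infty(\mathscr{L}_n^\alpha)^2\,du\le Ce^{-c\nu}$. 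Choosing $\epsilon$ with $C\sqrt\epsilon<1/4$ and then $n$ large, the claim follows from $\int_0^\infty(\mathscr{L}_n^\alpha)^2\,du=1$.

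With this in hand, \eqref{L4.8.1} is immediate: on $[\epsilon\nu,3\nu/2]$ one has $(1+\sqrt u)^\beta\ge(1+\sqrt{\epsilon\nu})^\beta\ge c_{\epsilon,\beta}\,\nu^{\beta/2}\ge Cn^{\beta/2}$ since $\beta\ge0$, hence the restricted integral is $\ge Cn^{\beta/2}$. Similarly, since $(1+\sqrt u)^{-\beta}$ is nonincreasing, on $[\epsilon\nu,3\nu/2]$ it is $\ge(1+\sqrt{3\nu/2})^{-\beta}\ge c_\beta\,\nu^{-\beta/2}\ge Cn^{-\beta/2}$, which gives the bound $\int_0^\infty(\mathscr{L}_n^\alpha)^2(1+\sqrt u)^{-\beta}\,du\ge Cn^{-\beta/2}$ in \eqref{L4.8.2}.

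It remains to prove the complementary bound $\ge Cn^{-1/2}$ in \eqref{L4.8.2}; this term dominates when $\beta>1$ and is, I expect, the only genuinely delicate point. Here I would discard everything except the fixed interval $[1/2,1]$, on which $(1+\sqrt u)^{-\beta}\ge 2^{-\beta}$, and prove $\int_{1/2}^1(\mathscr{L}_n^\alpha(u))^2\,du\ge Cn^{-1/2}$. For this one uses the oscillatory asymptotic valid for $1/\nu\le u\le1$ (the expansion from \cite[7.4, p.453]{M} already used in the proof of Lemma~\ref{L4.3}, cf.\ also \eqref{L1.2}),
\[
\mathscr{L}_n^\alpha(u)=c\,(\nu u)^{-1/4}\cos\Bigl(\sqrt{\nu u}-\tfrac{\alpha\pi}{2}-\tfrac\pi4\Bigr)+O\bigl(\nu^{-3/4}\bigr),
\]
together with the fact---established exactly as in the proof of Lemma~\ref{L4.3}---that the good set $G_n=\{u\in[1/2,1]:|\cos(\sqrt{\nu u}-\tfrac{\alpha\pi}{2}-\tfrac\pi4)|\ge\sqrt2/2\}$ has measure bounded below by a positive absolute constant, since the phase derivative $\tfrac{d}{du}\sqrt{\nu u}\sim\sqrt\nu$ is essentially constant on $[1/2,1]$ and $\cos^2$ equidistributes. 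On $G_n$ one then has $|\mathscr{L}_n^\alpha(u)|\ge c\,\nu^{-1/4}-C\nu^{-3/4}\ge c'\nu^{-1/4}$ for $n$ large, hence $\int_{1/2}^1(\mathscr{L}_n^\alpha)^2\,du\ge|G_n|\,(c'\nu^{-1/4})^2\ge Cn^{-1/2}$. Combining with the $n^{-\beta/2}$ bound yields $\ge C\max\{n^{-\beta/2},n^{-1/2}\}$, completing the proof. The only two places requiring care are the estimate $\int_0^{\epsilon\nu}(\mathscr{L}_n^\alpha)^2\,du\lesssim\sqrt\epsilon$ and the measure lower bound for $G_n$, both of which are routine consequences of Lemma~\ref{L} and already appear in the proof of Lemma~\ref{L4.3}.
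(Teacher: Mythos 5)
Your proof is correct, but the route to \eqref{L4.8.1} and to the $n^{-\beta/2}$ half of \eqref{L4.8.2} is genuinely different from the paper's. The paper works directly with the oscillatory asymptotic \eqref{PL4.8.1} (i.e.\ \eqref{L1.2}) near the turning point and invokes the bound $\arrowvert E(\nu)\arrowvert\geq C\sqrt{\nu}$ for a subset of $[\sqrt{\nu}/2,\sqrt{\nu}/\sqrt{2}]$ on which the cosine is bounded below (quoted from \cite[4.26]{CD}); the pointwise lower bound $x^{\alpha+1/2}\varphi_n^{\alpha}(x)\gtrsim\nu^{-1/4}$ on that set, together with $x\sim\sqrt{\nu}$ there, yields both $n^{\beta/2}$ and $n^{-\beta/2}$. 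You instead use only the \emph{upper} bounds of Lemma \ref{L} plus the $L^2$ normalization of $\mathscr{L}_n^{\alpha}$ to show, by a pigeonhole/mass-concentration argument, that at least half of the $L^2$ mass sits in $[\epsilon\nu,3\nu/2]$, where the weight $(1+\sqrt{u})^{\pm\beta}$ is comparable to $\nu^{\pm\beta/2}$; this is more elementary, avoids the finer asymptotic near $u\sim\nu$ and the measure estimate from \cite[4.26]{CD}, and the constants it loses (through the fixed $\epsilon$) are harmless for the lemma. For the $n^{-1/2}$ half of \eqref{L4.8.2} your argument coincides with the paper's: restrict to $[1/2,1]$, use the asymptotic \eqref{PL4.3.1} from \cite{M} and the measure bound \eqref{PL4.3.2} for the set where the cosine exceeds $\sqrt{2}/2$ (you may simply cite \eqref{PL4.3.2} rather than re-derive it). One cosmetic point: the uniform error $O(\nu^{-3/4})$ you attach to the asymptotic is only valid away from $u\sim 1/\nu$ (the error is really $O(\nu^{-3/4}u^{-3/4})$), but since you apply it only on $[1/2,1]$ this does not affect the argument.
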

\begin{proof}
We adopt the argument of \cite[Lemma 4.9]{CD}. Note that $x^{\alpha+1/2}\varphi_n^{\alpha}(x)=\sqrt{2x}\mathscr{L}_{n}^{a}(x^2)$, by the asymptotic \eqref{L1.2} of Laguerre functions $\mathscr{L}_{n}^{a}(x)$, we have
\begin{equation}\label{PL4.8.1}
x^{\alpha+1/2}\varphi_n^{\alpha}(x)=\frac{2(-1)^{n}}{\sqrt{\pi}}(\nu-x^2)^{-\frac{1}{4}}\cos\left(\frac{\nu(2\theta-\sin 2\theta)-\pi}{4}\right)+O\left(\frac{\nu^{\frac{1}{4}}}{(\nu-x^2)^{\frac{7}{4}}}+(x^2\nu)^{-\frac{3}{4}}\right),
\end{equation}
where $\theta=\arccos(x\nu^{-1/2})$ and $1\leq x\leq\nu^{1/2}-\nu^{1/6}$.
\cite[4.26]{CD} shows that there exists a constant $C>0$ such that, for any large $\nu$,
\begin{equation*}\label{PL4.8.2}
\arrowvert E(\nu)\arrowvert\geq C\sqrt{\nu},
\end{equation*}
where
\begin{equation*}
E(\nu):=\left\{ x\in [\sqrt{\nu}/2,\sqrt{\nu}/\sqrt{2}]:\cos\left(\frac{\nu(2\theta-\sin\theta)-\pi}{4}\right)\geq\frac{\sqrt{2}}{2}\right\}
\end{equation*}
and $\theta=\arccos(x \nu^{-1/2})$. Now, the desired estimate \eqref{L4.8.1} follows because $x^{\alpha+1/2}\varphi_n^{\alpha}(x)\geq C\nu^{-1/4}$,  $x\in E(\nu) $ by \eqref{PL4.8.1}. Clearly, we also have $\int_{0}^{\infty}\left(\varphi_n^{\alpha}(x)\right)^2(1+\arrowvert x\arrowvert)^{-\beta}x^{2\alpha+1}dx\geq C  n^{-\beta/2}$. On the other hand, \eqref{PL4.3.2} tells us 
\begin{equation*}
\left\arrowvert\left\{ x\in [\frac{1}{2},1]:\cos\left(\frac{\nu(2\theta-\sin\theta)-\pi}{4}\right)\geq\frac{\sqrt{2}}{2}\right\}\right\arrowvert\geq C_{*}
\end{equation*}
with $C_{*}$ independent of $\nu$. Combining this with \eqref{PL4.8.1}, we get
\begin{equation*} \int_{0}^{\infty}\left(\varphi_n^{\alpha}(x)\right)^2(1+\arrowvert x\arrowvert)^{-\beta}x^{2\alpha+1}dx\geq\int_{1/2}^{1}\left(\varphi_n^{\alpha}(x)\right)^2(1+\arrowvert x\arrowvert)^{-\beta}x^{2\alpha+1}dx\geq Cn^{-1/2}
\end{equation*} and hence the desired estimate \eqref{L4.8.2}. 
\end{proof}
\par We also need following estimate.
\begin{lemma}\label{L4.9}
For every $\alpha>-1$, let $\beta\geq 0$. Then, for all $f\in L^2(\mathbb{R}_+^{d},d\mu_{\alpha}(x))$, we have the estimate
\begin{equation}\label{L4.9.1}
n^{\beta/4}\lVert P_nf\rVert_{2}\leq C \lVert P_nf\rVert_{L^2(\mathbb{R}_+^{d},(1+\arrowvert x\arrowvert)^{\beta}d\mu_{\alpha}(x))}.
\end{equation}
\end{lemma}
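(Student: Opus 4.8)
The plan is to reduce the $d$-dimensional estimate to the one-dimensional lower bound \eqref{L4.8.1} by exploiting the tensor-product structure of the Laguerre functions. Write $g:=P_nf=\sum_{\arrowvert\mu\arrowvert_1=n}c_\mu\varphi_\mu^{\alpha}$ with $c_\mu=\langle f,\varphi_\mu^{\alpha}\rangle_{\alpha}$, so that $\lVert g\rVert_2^2=\sum_{\arrowvert\mu\arrowvert_1=n}\arrowvert c_\mu\arrowvert^2$ by orthonormality. Since $\lVert g\rVert_{L^2(\mathbb{R}_{+}^{d},(1+\arrowvert x\arrowvert)^{\beta}d\mu_{\alpha})}^2=\int_{\mathbb{R}_{+}^{d}}(1+\arrowvert x\arrowvert)^{\beta}\arrowvert g\arrowvert^2\,d\mu_{\alpha}$, the estimate \eqref{L4.9.1} is equivalent to the lower bound
\[
\int_{\mathbb{R}_{+}^{d}}(1+\arrowvert x\arrowvert)^{\beta}\arrowvert g(x)\arrowvert^2\,d\mu_{\alpha}(x)\ \geq\ C\,n^{\beta/2}\sum_{\arrowvert\mu\arrowvert_1=n}\arrowvert c_\mu\arrowvert^2 .
\]
The elementary starting point is that for $x=(x_1,\dots,x_d)\in\mathbb{R}_{+}^{d}$ one has $\arrowvert x\arrowvert\geq x_i\geq0$ for each $i$, hence $(1+\arrowvert x\arrowvert)^{\beta}\geq(1+x_i)^{\beta}$ since $\beta\geq0$; averaging over $i$ gives
\[
\int_{\mathbb{R}_{+}^{d}}(1+\arrowvert x\arrowvert)^{\beta}\arrowvert g\arrowvert^2\,d\mu_{\alpha}\ \geq\ \frac{1}{d}\sum_{i=1}^{d}\int_{\mathbb{R}_{+}^{d}}(1+x_i)^{\beta}\arrowvert g\arrowvert^2\,d\mu_{\alpha}.
\]

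Next I would evaluate each term on the right. Fix $i$, expand $\arrowvert g\arrowvert^2=\sum_{\mu,\mu'}c_\mu\overline{c_{\mu'}}\varphi_\mu^{\alpha}\varphi_{\mu'}^{\alpha}$ (the $\varphi$'s being real-valued), and note that, as $(1+x_i)^{\beta}$ depends only on $x_i$, the integral factorises over the coordinates; for $j\neq i$ the $x_j$-factor is $\int_0^{\infty}\varphi_{\mu_j}^{\alpha_j}(x_j)\varphi_{\mu'_j}^{\alpha_j}(x_j)x_j^{2\alpha_j+1}\,dx_j=\delta_{\mu_j\mu'_j}$ by orthonormality. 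Thus $\mu_j=\mu'_j$ for all $j\neq i$, which with $\arrowvert\mu\arrowvert_1=\arrowvert\mu'\arrowvert_1=n$ forces $\mu=\mu'$, so only the diagonal survives and
\[
\int_{\mathbb{R}_{+}^{d}}(1+x_i)^{\beta}\arrowvert g\arrowvert^2\,d\mu_{\alpha}=\sum_{\arrowvert\mu\arrowvert_1=n}\arrowvert c_\mu\arrowvert^2\int_0^{\infty}(1+x)^{\beta}\big(\varphi_{\mu_i}^{\alpha_i}(x)\big)^2x^{2\alpha_i+1}\,dx .
\]
Here I would apply \eqref{L4.8.1} with $\alpha$ replaced by $\alpha_i$; combined with the trivial bound $\int_0^{\infty}(1+x)^{\beta}(\varphi_m^{\alpha_i}(x))^2x^{2\alpha_i+1}\,dx\geq\int_0^{\infty}(\varphi_m^{\alpha_i}(x))^2x^{2\alpha_i+1}\,dx=1$, it produces a constant $c>0$ depending only on $\alpha_i$ and $\beta$ such that $\int_0^{\infty}(1+x)^{\beta}(\varphi_m^{\alpha_i}(x))^2x^{2\alpha_i+1}\,dx\geq c(1+m)^{\beta/2}$ for every $m\in\mathbb{Z}_{\geq 0}$ (large $m$ from \eqref{L4.8.1}, the finitely many small $m$ from the trivial bound, after shrinking $c$).

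Plugging this in and summing over $i$,
\[
\sum_{i=1}^{d}\int_{\mathbb{R}_{+}^{d}}(1+x_i)^{\beta}\arrowvert g\arrowvert^2\,d\mu_{\alpha}\ \geq\ c\sum_{\arrowvert\mu\arrowvert_1=n}\arrowvert c_\mu\arrowvert^2\sum_{i=1}^{d}(1+\mu_i)^{\beta/2}\ \geq\ c\,(n/d)^{\beta/2}\sum_{\arrowvert\mu\arrowvert_1=n}\arrowvert c_\mu\arrowvert^2,
\]
the last inequality using the pigeonhole fact that $\arrowvert\mu\arrowvert_1=n$ forces $\max_i\mu_i\geq n/d$, so $\sum_{i=1}^{d}(1+\mu_i)^{\beta/2}\geq(1+\max_i\mu_i)^{\beta/2}\geq(n/d)^{\beta/2}$. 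Combining with the two displays above yields $\int_{\mathbb{R}_{+}^{d}}(1+\arrowvert x\arrowvert)^{\beta}\arrowvert g\arrowvert^2\,d\mu_{\alpha}\geq c\,d^{-1-\beta/2}n^{\beta/2}\lVert g\rVert_2^2$, and taking square roots is exactly \eqref{L4.9.1}, with a constant depending only on $\alpha$, $\beta$ and $d$. I do not expect a genuine obstacle here: all the analytic content is already in \eqref{L4.8.1}, and what remains is product-structure bookkeeping, orthonormality of the Laguerre functions in the "inactive" variables, and the pigeonhole bound $\max_i\mu_i\geq n/d$. A duality/Cauchy--Schwarz argument based on the trace Lemma \ref{Trace} would at best replace $n^{\beta/4}$ by $n^{1/4}$ in \eqref{L4.9.1}, too weak once $\beta>1$, so going through the sharp one-dimensional estimate \eqref{L4.8.1} is essential; the same scheme with \eqref{L4.8.2} gives the $(1+\arrowvert x\arrowvert)^{-\beta}$ analogue, not needed here.
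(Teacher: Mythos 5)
Your proof is correct and is essentially the argument the paper intends (the paper omits the details, pointing to Lemma \ref{L4.8} and the method of Lemma \ref{Trace}, cf. \cite{CD}): reduce to the one-dimensional lower bound \eqref{L4.8.1} by exploiting the tensor-product orthogonality of the Laguerre functions in the inactive variables together with the fact that $\arrowvert\mu\arrowvert_1=n$ forces some coordinate $\mu_i\geq n/d$. Your only deviation is cosmetic: you diagonalize the full projection $P_nf$ first and pigeonhole per multiindex, rather than first splitting $P_nf$ into the $d$ spectrally supported pieces as in the trace-lemma argument, but the ingredients and the estimate are the same.
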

Since the proof of Lemma \ref{L4.9} follows directly from Lemma \ref{L4.8} and its proof method is similar to that of Lemma \ref{Trace}, we omit the details here. For a thorough discussion, the reader may refer to \cite[Lemma 4.10]{CD}. We now proceed with the proof of Proposition \ref{Sp2} using the argument as that in \cite{CD}.\\
\par \textbf{Proof of Proposition \ref{Sp2}.}
Since we are assuming that \eqref{Sp2.1} holds, by duality we have the equivalent estimate
\begin{equation*}
\sup_{R>0}\lVert S_{R}^{\lambda}(L_{\alpha})f\rVert_{L^2(\mathbb{R}_{+}^d,(1+\arrowvert x\arrowvert)^{\beta}d\mu_{\alpha})\rightarrow L^2(\mathbb{R}_{+}^d,(1+\arrowvert x\arrowvert)^{\beta}d\mu_{\alpha})}<\infty.
\end{equation*}
Combining this and \eqref{P} we obtain for function $F$ with supp\,$F\subset[0,\infty)$ that
\begin{equation*}
\begin{aligned}
\lVert F(L_{\alpha})f\rVert_{L^{2}(\mathbb{R}_{+}^d,(1+\arrowvert x\arrowvert)^{\beta}d\mu_{\alpha}(x))}&\leq C \sup_{R>0}\lVert S_{R}^{\lambda}(L_{\alpha})f\rVert_{L^{2}(\mathbb{R}_{+}^d,(1+\arrowvert x\arrowvert)^{\beta}d\mu_{\alpha}(x))}\int_0^{\infty}\arrowvert F^{\lambda+1}(s)s^{\lambda}ds\\
&\leq C\lVert f\rVert_{L^{2}(\mathbb{R}_{+}^d,(1+\arrowvert x\arrowvert)^{\beta}d\mu_{\alpha})}\int_0^{\infty}\arrowvert F^{\lambda+1}(s)s^{\lambda}ds.
\end{aligned}
\end{equation*}
We take $F(t)=\eta(t-(4n+2\arrowvert\alpha\arrowvert_1+2d))$ as before in the above where $\eta$ is a non-negative smooth function with $\eta(0)=1$ and supp\,$\eta\subset[-1,1]$. Then, since $\int_0^{\infty}\arrowvert \eta^{\lambda+1}(s-(4n+2\arrowvert\alpha\arrowvert_1+2d))s^{\lambda}ds\sim n^{\lambda}$, and $\eta(L_{\alpha}-(4n+2\arrowvert\alpha\arrowvert_1+2d))f=P_nf$, it follows that
\begin{equation}\label{P4.28}
\lVert P_n f\rVert_{L^2(\mathbb{R}_{+}^d,(1+\arrowvert x\arrowvert)^{\beta}d\mu_{\alpha}(x))}\leq Cn^{\lambda}\lVert f\rVert_{L^2(\mathbb{R}_{+}^d,(1+\arrowvert x\arrowvert)^{\beta}d\mu_{\alpha}(x))}.
\end{equation}
We now consider specific functions $g_n$, $G_n$ which are given by
\begin{equation*}
g_n(x)=\varphi_n^{\alpha_1}(x_1)\varphi_0^{\alpha_2}(x_2)\cdot\cdot\cdot\varphi_0^{\alpha_d}(x_d),\quad G_n(x)=g_n(x)(1+\arrowvert x\arrowvert)^{-\beta},
\end{equation*}
and claim that
\begin{equation}\label{P4.29}
\lVert P_n G_n\rVert_{L^2(\mathbb{R}_{+}^d,(1+\arrowvert x\arrowvert)^{\beta}d\mu_{\alpha}(x))}\leq C n^{\lambda}\min\{n^{\beta/4},n^{1/4}\}\lVert P_n G_n\rVert_{L^2(\mathbb{R}_{+}^d,d\mu_{\alpha}(x))}
\end{equation}
with $C$ independent of $n$. As a matter of fact, since $\lVert g_n(x)\rVert_2=1$, we have $\lVert P_n G_n\rVert_{2}\geq\langle P_n G_n,g_n\rangle_{\alpha}=\langle G_n,P_ng_n\rangle_{\alpha}$. Thus, noting that $P_ng_n=g_n$ from our choice of $g_n$ and $g_n(1+\arrowvert x\arrowvert)^{-\beta/2}=G_n(1+\arrowvert x\arrowvert)^{\beta/2}$, we obtain that
\begin{equation}\label{P4.30}
\lVert P_n G_n\rVert_{2}\geq \langle G_n,g_n\rangle_{\alpha}= \lVert(1+\arrowvert x\arrowvert)^{\beta/2} G_n\rVert_{2}\lVert(1+\arrowvert x\arrowvert)^{-\beta/2} g_n\rVert_{2}.
\end{equation}
From our choice of $g_n$, we have
\begin{equation*}
\lVert g_n\rVert_{L^2(\mathbb{R}_{+}^d,(1+\arrowvert x\arrowvert)^{-\beta}d\mu_{\alpha}(x))}^2\geq\int_0^{\infty}\arrowvert\varphi_n^{\alpha_1}(x_1)\arrowvert^2(1+\arrowvert x_1\arrowvert)^{-\beta}x_1^{2\alpha_1+1}dx_1\prod_{i=2}^{d}\int_0^{1}\arrowvert\varphi_0^{\alpha_i}(t)\arrowvert^2t^{2\alpha_i+1}dt,
\end{equation*}
by the estimate \eqref{L4.8.2}, it follows that
$$\lVert g_n\rVert_{L^2(\mathbb{R}_{+}^d,(1+\arrowvert x\arrowvert)^{-\beta}d\mu_{\alpha}(x))}^2\geq C\max\{n^{-\beta/2},n^{-1/2}\}.$$
Combining this with \eqref{P4.30} yields 
\begin{equation*}
\lVert P_n G_n\rVert_{2}\geq C\max\{n^{-\beta/2},n^{-1/2}\} \lVert G_n\rVert_{L^{2}(\mathbb{R}_{+}^d,(1+\arrowvert x\arrowvert)^{\beta}d\mu_{\alpha})}.
\end{equation*}
Using \eqref{P4.28}, we also have $n^{\lambda}\lVert G_n\rVert_{L^{2}(\mathbb{R}_{+}^d,(1+\arrowvert x\arrowvert)^{\beta}d\mu_{\alpha})}\geq C\lVert P_nG_n\rVert_{L^{2}(\mathbb{R}_{+}^d,(1+\arrowvert x\arrowvert)^{\beta}d\mu_{\alpha})}$. Thus we have the estimate \eqref{P4.29}. Now, we apply the estimate \eqref{L4.9.1} to the function $G_n$ and combine the consequent estimate with \eqref{P4.29} to get 
\begin{equation}\label{P4.31}
n^{\beta/4}\lVert P_n G_n\rVert_2\leq Cn^{\lambda}\min\{n^{\beta/4},n^{1/4}\}\lVert P_n G_n\rVert_{2}
\end{equation}
with $C$ independent of $n$. Since $\lVert P_n G_n\rVert_{2}\neq 0$, \eqref{P4.31} implies $n^{\beta/4}\leq Cn^{\lambda}\min\{n^{\beta/4},n^{1/4}\}$ with $C$ independent of $n$. Letting $n\rightarrow\infty$ gives $\lambda\geq \max\{(\beta-1)/4,0\}$ as desired.$\hfill{\Box}$
\begin{appendix}
\section{Proof of Lemma \ref{SL}}\label{appendix}
In this appendix, we provide a precise proof of Lemma \ref{SL}, and the argument follows the same reasoning as the proof of \cite[Lemma 3.1]{CD}. For this purpose, we recall the  Lemma \ref{SL} as follows.
\begin{lemma}
Let $A_{\beta,d}^{\varepsilon}(\delta)$ be given by \eqref{S4}. Then, for all $0<\delta<1/2$ and $0<\varepsilon\leq1/2$, we have the following estimates:
\begin{equation}\label{Alow}
\int_{\mathbb{R}_{+}^{d}}\arrowvert\mathfrak{S}_{\delta}^{l}f(x)\arrowvert^2(1+\arrowvert x\arrowvert)^{-\beta}d\mu_{\alpha}(x)\leq C\delta A_{\beta,d}^{\varepsilon}(\delta)\int_{\mathbb{R}_{+}^{d}}\arrowvert f(x)\arrowvert^2(1+\arrowvert x\arrowvert)^{-\beta}d\mu_{\alpha}(x),
\end{equation}
\begin{equation}\label{Ahigh}
\int_{\mathbb{R}_{+}^{d}}\arrowvert\mathfrak{S}_{\delta}^{h}f(x)\arrowvert^2(1+\arrowvert x\arrowvert)^{-\beta}d\mu_{\alpha}(x)\leq C\delta A_{\beta,d}^{\varepsilon}(\delta)\int_{\mathbb{R}_{+}^{d}}\arrowvert f(x)\arrowvert^2(1+\arrowvert x\arrowvert)^{-\beta}d\mu_{\alpha}(x).
\end{equation}
\end{lemma}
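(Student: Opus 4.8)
The plan is to establish \eqref{Alow} and \eqref{Ahigh} separately, following the blueprint of \cite[Lemma 3.1]{CD}; the split at $t=\delta^{-1/2}$ is chosen precisely so that two different mechanisms apply on the two ranges of $t$. In both cases the workhorse is the generalized trace Lemma \ref{ETrace}, the difference being how the two-sided weight $(1+|x|)^{-\beta}$ is turned into a form the trace lemma can digest.

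For the low-frequency estimate \eqref{Alow} the key observation is that for $t\le\delta^{-1/2}$ the operator $\phi(\delta^{-1}(1-L_\alpha/t^2))$ has spectral support in $[t^2(1-\tfrac{\delta}{2}),t^2(1-\tfrac{\delta}{8})]\subset[0,\,C\delta^{-1}]$, so that $L_\alpha$ is effectively bounded by $\delta^{-1}$ there. Writing $f=(1+|x|)^{\beta/2}h$, so that $\|f\|_{L^2((1+|x|)^{-\beta}d\mu_\alpha)}=\|h\|_2$, one uses Lemma \ref{weighted} with exponent $\beta/4$, together with duality, to see that $(1+L_\alpha)^{-\beta/4}(1+|x|)^{\beta/2}$ is bounded on $L^2(d\mu_\alpha)$; this reduces matters to a one-sided weighted bound for the operator $(1+L_\alpha)^{\beta/4}\phi(\delta^{-1}(1-L_\alpha/t^2))$ from $L^2(d\mu_\alpha)$ into $L^2((1+|x|)^{-\beta}d\mu_\alpha)$, where on the spectral support the factor $(1+L_\alpha)^{\beta/4}$ is comparable to $t^{\beta/2}$. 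For fixed $t$ this is exactly the setting of Lemma \ref{ETrace}: one applies \eqref{ETrace1} if $\beta>1$ and \eqref{ETrace2} if $0<\beta\le1$, with $N\sim t$, using that for $t\le\delta^{-1/2}$ each spectral window $[t^2(1-\tfrac{\delta}{2}),t^2(1-\tfrac{\delta}{8})]$ contains only $O(1)$ eigenvalues of $L_\alpha$, which pins down the relevant norm $\|\delta_N F\|_{N^2,q}$. One then integrates in $t$ over $[\tfrac12,\delta^{-1/2}]$: the extra factor $\delta$ in the target comes from orthogonality in the auxiliary variable $t$, since each eigenvalue $e_n$ lies in the spectral window only for $t$ in a set of $dt/t$-measure $\sim\delta$, and summing the resulting geometric series over the dyadic scales of $t$ produces exactly $\delta A_{\beta,d}^{\varepsilon}(\delta)$.

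For the high-frequency estimate \eqref{Ahigh} the mechanism is spatial localization via finite speed of propagation, as in \cite{Carbery1}. Writing $\phi(\delta^{-1}(1-L_\alpha/t^2))=m_t(\sqrt{L_\alpha})$ and inverting the (even) Fourier transform, $m_t(\sqrt{L_\alpha})=c\int\widehat{m_t}(\xi)\cos(\xi\sqrt{L_\alpha})\,d\xi$, one notes that $m_t$ is supported where $\sqrt{L_\alpha}\sim t$ with width $\sim t\delta$, so $\widehat{m_t}$ concentrates at $|\xi|\lesssim(t\delta)^{-1}\le\delta^{-1/2}$ when $t\ge\delta^{-1/2}$; Lemma \ref{Finit} then confines the kernel of $m_t(\sqrt{L_\alpha})$ to $\{|x-y|\lesssim\delta^{-1/2}\}$ up to rapidly decaying tails. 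Partitioning $\mathbb{R}_+^d$ into a grid of cubes of sidelength $\sim\delta^{-1/2}$, on cubes $Q$ at distance $\gtrsim\delta^{-1/2}$ from the origin the weight $(1+|x|)^{-\beta}$ is comparable to a constant on the fattened cube $\tilde Q$, so the weighted square-function estimate over $Q$ reduces to the unweighted bound $\|\mathfrak{S}_\delta g\|_2^2\lesssim\delta\|g\|_2^2$ — which is Plancherel together with the same $dt/t$-orthogonality computation — applied to $g=f\mathbbm{1}_{\tilde Q}$ localized in frequency to the block $L_\alpha\sim t^2$; summing over $Q$ using the bounded overlap of the $\tilde Q$ and the Littlewood--Paley inequality Proposition \ref{PL} (valid since $-\beta\in(-2(|\alpha|_1+d),2(|\alpha|_1+d))$) gives $\lesssim\delta\,\|f\|_{L^2((1+|x|)^{-\beta}d\mu_\alpha)}^2$, which is $\le\delta A_{\beta,d}^{\varepsilon}(\delta)\|f\|^2$ because $A_{\beta,d}^{\varepsilon}(\delta)\ge1$. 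The $O(1)$ cubes near the origin, where the weight degenerates, are treated by invoking Lemma \ref{ETrace} directly — its built-in weight $(1+|x|)^{-\beta}$ being adapted precisely to that region — again combined with $t$-orthogonality and, for $0<\beta\le1$, with the higher exponent $2\beta^{-1}(1+\varepsilon)$ in \eqref{ETrace2}.

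The main obstacle I expect is the exact bookkeeping of the power of $\delta$: the naive estimate that fixes $t$ and applies the trace lemma before integrating loses precisely one factor of $\delta$, so the heart of the matter is to genuinely exploit the $L^2$-orthogonality in $t$ — immediate by Plancherel in the unweighted setting, but in the weighted setting only accessible after the geometry has been made local, either through the finite-speed-of-propagation decomposition (high frequencies) or through the Lemma \ref{weighted} trade of a spatial weight for a spectral bound (low frequencies). A secondary technical burden is the range $0<\beta\le1$, where \eqref{ETrace2} — itself obtained by a bilinear interpolation — forces the $\varepsilon$-loss recorded in $A_{\beta,d}^{\varepsilon}$, after which one must extend the admissible range of $\beta$ in Proposition \ref{SP} to all $0\le\beta<2(|\alpha|_1+d)$ by interpolation with the endpoint estimates, exactly as on \cite[p.\,18]{CD}.
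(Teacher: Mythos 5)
Your low-frequency argument is essentially the one in the appendix: trade the weight for $(1+L_{\alpha})^{-\beta/4}$ via Lemma \ref{weighted} and duality, apply Lemma \ref{ETrace} to $(1+L_{\alpha})^{\beta/4}\phi_{\delta}(\cdot/t)$ (for $N\sim t\le\delta^{-1/2}$ the rescaled support has width $\lesssim\delta\le N^{-2}$, which is your ``$O(1)$ eigenvalues per window'' observation and pins down $\lVert\delta_{N}F\rVert_{N^2,q}$), and recover the extra factor $\delta$ from the $t$-integration by cutting $[2^{k-1},2^{k+2}]$ into intervals of length $\sim 2^{k}\delta$ matched with spectral cut-offs; the paper closes the sum over the dyadic scales by orthogonality of the spectral pieces (Littlewood--Paley plus the $\eta_{i'}$), not by a geometric series, but that is cosmetic.

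The high-frequency half, as you have written it, has a genuine gap: the spatial grid cannot have the fixed side length $\delta^{-1/2}$. The finite-speed localization scale of $\phi_{\delta}(\sqrt{L_{\alpha}}/t)$ is $\sim(t\delta)^{-1}$, which shrinks as $t$ grows, and the paper's cubes have side $c_{0}2^{j-k+2}$ (so $\sim(t\delta)^{-1}$ for the main piece $j=j_{0}$, $t\sim 2^{k}$) precisely because the estimate on the cubes at the origin needs this. Indeed, take $\beta>1$, a dyadic block $t\sim 2^{k}\ge\delta^{-1/2}$, and your near-origin cube $\tilde Q_{0}$ of side $\sim\delta^{-1/2}$: you must localize (the unweighted norm $\lVert\psi_{k}(\sqrt{L_{\alpha}})f\rVert_{2}$ is not controlled by the weighted norm, so Lemma \ref{ETrace} cannot be applied to the global function), and reinstating the weight costs $\lVert\chi_{\tilde Q_{0}}\psi_{k}(\sqrt{L_{\alpha}})f\rVert_{2}^{2}\lesssim\delta^{-\beta/2}\int|\psi_{k}(\sqrt{L_{\alpha}})f|^{2}(1+|x|)^{-\beta}d\mu_{\alpha}$. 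Lemma \ref{ETrace} with $N\sim 2^{k}$ and spectral window of rescaled width $\sim\delta$ gives a factor $\sim 2^{k}\delta$, and the $t$-orthogonality one more $\delta$, so the block contributes $\sim 2^{k}\delta^{2}\cdot\delta^{-\beta/2}$, which exceeds the target $\delta A_{\beta,d}^{\varepsilon}(\delta)=\delta^{3/2-\beta/2}$ by the unbounded factor $2^{k}\delta^{1/2}$ once $t\gg\delta^{-1/2}$. With cubes of side $\sim(t\delta)^{-1}\sim\delta^{-1}2^{-k}$ the reinstatement cost becomes $(\delta^{-1}2^{-k})^{\beta}$ and the same bookkeeping gives $2^{k(1-\beta)}\delta^{2-\beta}\le\delta^{3/2-\beta/2}$ using $2^{k}\ge\delta^{-1/2}$ and $\beta>1$; this is exactly how the paper's terms $I_{1}$, $I_{2}$ close (the pieces $\phi_{\delta,j}$ with $j>j_{0}$, whose propagation distance is larger, are absorbed by the rapid decay $2^{(j_{0}-j)N}$, and when $j\le k$ the cubes have side $\lesssim 1$, the weight is comparable on every fattened cube, and no trace lemma is needed). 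Your treatment away from the origin and the final assembly via bounded overlap and Proposition \ref{PL} are fine, but the near-origin cube is precisely where the exponent $\delta^{1/2-\beta/2}$ must be produced, so the plan does not close there without making the localization scale $t$-dependent.
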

\subsection{Proof of \eqref{Alow}: low frequency part}
\begin{proof}
We take the second case for example, that is, we assume $-d<\arrowvert \alpha\arrowvert_1<0$. We start with Littlewood-Paley decomposition associated with the operator $L_{\alpha}$. Fix a function $\psi\in C^{\infty}$ in $(\frac{1}{2},2)$ such that $\sum_{-\infty}^{\infty}\psi(2^{-k}x)=1$ on $\mathbb{R}\backslash\{0\}$. By the spectral theory, we obtain
\begin{equation}\label{LPC}
\sum_{k}\psi_{k}(\sqrt{L_{\alpha}})f:=\sum_{k}\psi (2^{-k}\sqrt{L_{\alpha}})f=f
\end{equation}
for any $f\in L^2(\mathbb{R}_{+}^{d},d\mu_{\alpha}(x))$. Using \eqref{LPC}, we get
\begin{equation}\label{A2}
\arrowvert \mathfrak{S}_{\delta}^{l}f(x)\arrowvert ^2\leq C \sum_{[\log_2\sqrt{\arrowvert \alpha\arrowvert_1+d}]\leq k\leq 1-\log_2\sqrt{\delta}}\int_{2^{k-1}}^{2^{k+2}}\arrowvert \phi\left(\delta^{-1}\left(1-\frac{L_{\alpha}}{t^2}\right)\right)\psi_{k}(\sqrt{L_{\alpha}})f(x)\arrowvert^2\frac{dt}{t}
\end{equation}
for $f\in L^2(\mathbb{R}_{+}^{d},d\mu_{\alpha}(x))\cap L^2(\mathbb{R}_{+}^{d},(1+\arrowvert x\arrowvert)^{-\beta}d\mu_{\alpha}(x))$. To exploit disjointness of the spectral support $\arrowvert \phi\left(\delta^{-1}\left(1-\frac{L_{\alpha}}{t^2}\right)\right)$, we make additional decomposition in $t$. For $k\in \mathbb{Z}$ and $i=0,1,\cdot\cdot\cdot,i_o=[8/\delta]+1$ we set
\begin{equation}\label{A3}
I_i=[2^{k-1}+,i2^{k-1}\delta,2^{k-1}+(i+1)2^{k-1}\delta],
\end{equation}
so that $[2^{k-1},2^{k+2}]\subset \cup_{i=0}^{i_0}I_i$. Define $\eta_i$ adapted to the interval $I_i$ by setting
\begin{equation}\label{A4}
\eta_i(s)=\eta\left(i+\frac{2^{k-1}-s}{2^{k-1}\delta}\right),
\end{equation}
where $\eta\in C_{c}^{\infty}(-1,1)$ and $\sum_{i\in\mathbb{Z}}\eta(\cdot-i)=1$. For simplicity we also set
$$\phi_{\delta}(s):=\phi(\delta^{-1}(1-s^2)).$$
Then for $t\in I_i$, $\phi_{\delta}(s/t)\eta_i'(s)\neq0$ only if $i-i\delta-3\leq i'\leq i+i\delta+3$, see \cite[p.20]{CD}. Thus we can deduce that
\begin{equation}\label{Bclaim}
\arrowvert \mathfrak{S}_{\delta}^{l}f(x)\arrowvert ^2\leq C \sum_{[\log_2\sqrt{\arrowvert \alpha\arrowvert+d}]\leq k\leq 1-\log_2\sqrt{\delta}}\sum_{i}\sum_{i'=i-10}^{i+10}\int_{I_i}\arrowvert \phi_{\delta}(t^{-1}\sqrt{L_{\alpha}})\eta_{i'}(\sqrt{L_{\alpha}})\psi_{k}(\sqrt{L_{\alpha}})f(x)\arrowvert^2\frac{dt}{t}.
\end{equation}
Now, we claim that, for $2^{[\log_2\sqrt{\arrowvert \alpha\arrowvert_1+d}]-1}\leq t\leq \delta^{-1/2}$,
\begin{equation}\label{Claim}
\int_{\mathbb{R}_{+}^{d}}\arrowvert \phi_{\delta}(t^{-1}\sqrt{L_{\alpha}}) g(x)\arrowvert^2(1+\arrowvert x\arrowvert)^{-\beta}d\mu_{\alpha}(x)\leq C A_{\beta,d}^{\varepsilon}(\delta)\int_{\mathbb{R}_{+}^{d}}\arrowvert (1+L_{\alpha})^{-\beta/4} g(x)\arrowvert^2d\mu_{\alpha}(x).
\end{equation}
To prove this claim, let us consider the equivalent estimate
\begin{equation}\label{Claim1}
\int_{\mathbb{R}_{+}^{d}}\arrowvert \phi_{\delta}(t^{-1}\sqrt{L_{\alpha}})(1+L_{\alpha})^{\beta/4} g(x)\arrowvert^2(1+\arrowvert x\arrowvert)^{-\beta}d\mu_{\alpha}(x)\leq C A_{\beta,d}^{\varepsilon}(\delta)\int_{\mathbb{R}_{+}^{d}}\arrowvert g(x)\arrowvert^2d\mu_{\alpha}(x).
\end{equation}
We first show the estimate for the case $2(\arrowvert \alpha\arrowvert_1 +d)>1$. Let $N=8[t]+1$. Note that supp\,$\phi_{\delta}(\cdot/t)\subset[N/4,N]$. Then by estimate \eqref{ETrace1} in Lemma \ref{ETrace}, we obtain
\begin{equation*}
\begin{aligned}
&\int_{\mathbb{R}_{+}^{d}}\arrowvert \phi_{\delta}(t^{-1}\sqrt{L_{\alpha}})(1+L_{\alpha})^{\beta/4} g(x)\arrowvert^2(1+\arrowvert x\arrowvert)^{-\beta}d\mu_{\alpha}(x)\\
&\leq CN\left\lVert \phi_{\delta}(t^{-1} Nx)(1+N^2x^2)^{\beta/4}\right\rVert_{N^2,2}^2\int_{\mathbb{R}_{+}^{d}}\arrowvert g(x)\arrowvert^2d\mu_{\alpha}(x).
\end{aligned}
\end{equation*}
Estimate (3.12) in \cite{CD} tells us that
$$\left\lVert \phi_{\delta}(t^{-1} Nx)(1+N^2x^2)^{\beta/4}\right\rVert_{N^2,2}^2 \leq CN^{\beta-2}.$$
Thus, noting  $2^{[\log_2\sqrt{\arrowvert \alpha\arrowvert+d}]-1}\leq t\leq \delta^{-1/2}$ and $\beta>1$, we obtain
\begin{equation*}
\int_{\mathbb{R}_{+}^{d}}\arrowvert \phi_{\delta}(t^{-1}\sqrt{L_{\alpha}})(1+L_{\alpha})^{\beta/4} g(x)\arrowvert^2(1+\arrowvert x\arrowvert)^{-\beta}d\mu_{\alpha}(x)\leq C\delta^{1/2-\beta/2}\int_{\mathbb{R}_{+}^{d}}\arrowvert g(x)\arrowvert^2d\mu_{\alpha}(x),
\end{equation*}
which gives \eqref{Claim1} and thus \eqref{Claim} in the case $2(\arrowvert \alpha\arrowvert_1 +d)>1$. If $2(\arrowvert \alpha\arrowvert_1 +d)\leq1$, for any $\varepsilon>0$, then by \eqref{ETrace2} in Lemma \ref{ETrace}, we have
\begin{equation}
\begin{aligned}
&\int_{\mathbb{R}_{+}^{d}}\arrowvert \phi_{\delta}(t^{-1}\sqrt{L_{\alpha}})(1+L_{\alpha})^{\beta/4} g(x)\arrowvert^2(1+\arrowvert x\arrowvert)^{-\beta}d\mu_{\alpha}(x)\\
&\leq C_{\varepsilon}N^{\frac{\beta}{1+\varepsilon}}\left\lVert \phi_{\delta}(t^{-1} Nx)(1+N^2x^2)^{\beta/4}\right\rVert_{N^2,\frac{2(1+\varepsilon)}{\beta}}^2\int_{\mathbb{R}_{+}^{d}}\arrowvert g(x)\arrowvert^2d\mu_{\alpha}(x).
\end{aligned}
\end{equation}
As before, we have (see \cite[p.22]{CD})
$$\left\lVert \phi_{\delta}(t^{-1} Nx)(1+N^2x^2)^{\beta/4}\right\rVert_{N^2,\frac{2(1+\varepsilon)}{\beta}}^2\leq CN^{\frac{\beta(\varepsilon-1)}{1+\varepsilon}}.$$
So it follows that
\begin{equation*}
\int_{\mathbb{R}_{+}^{d}}\arrowvert \phi_{\delta}(t^{-1}\sqrt{L_{\alpha}})(1+L_{\alpha})^{\beta/4} g(x)\arrowvert^2(1+\arrowvert x\arrowvert)^{-\beta}d\mu_{\alpha}(x)\leq C \delta^{-\frac{\beta}{2(1+\varepsilon)}}\int_{\mathbb{R}_{+}^{d}}\arrowvert g(x)\arrowvert^2d\mu_{\alpha}(x).
\end{equation*}
This gives \eqref{Claim} with $2(\arrowvert \alpha\arrowvert_1 +d)\leq1$. Now, combining \eqref{Claim} and \eqref{Bclaim}, we see that $\int_{\mathbb{R}_{+}^{d}}\arrowvert \mathfrak{S}_{\delta}^{l}f(x)\arrowvert ^2 (1+\arrowvert x\arrowvert)^{-\beta}d\mu_{\alpha}(x)$ is bounded by
\begin{equation*}
 CA_{\beta,d}^{\varepsilon}(\delta)\sum_{[\log_2\sqrt{\arrowvert \alpha\arrowvert+d}]\leq k\leq 1-\log_2\sqrt{\delta}}\sum_{i}\sum_{i'=i-10}^{i+10}\int_{I_i}\arrowvert \eta_{i'}(\sqrt{L_{\alpha}})\psi_{k}(\sqrt{L_{\alpha}})(1+L_{\alpha})^{-\beta/4} f(x)\arrowvert^2\frac{dt}{t}.
\end{equation*}
Since the length of interval $I_i$ is compareble to $2^{k-1}\delta$, taking integration in $t$ and using disjointness of the spectral supports, we obtain
\begin{equation*}
\int_{\mathbb{R}_{+}^{d}}\arrowvert \mathfrak{S}_{\delta}^{l}f(x)\arrowvert ^2 (1+\arrowvert x\arrowvert)^{-\beta}d\mu_{\alpha}(x)\leq C\delta A_{\beta,d}^{\varepsilon}(\delta)\int_{\mathbb{R}_{+}^{d}}\arrowvert(1+L_{\alpha})^{-\beta/4}f(x)\arrowvert^2d\mu_{\alpha}(x).
\end{equation*}
This, being combined with Lemma \ref{weighted} , yields the desired estimate \eqref{Alow}.
\end{proof}
\subsection{Proof of \eqref{Ahigh}: high frequency part}
For any even function $F$ with $\hat{F}\in L^1(\mathbb{R})$ and supp\,$\hat{F}\subseteq[-r,r]$ we can deduce from Lemma \ref{Finit} that (see also \cite[3.14, p.23]{CD})
\begin{equation}\label{B2}
supp\,K_{F(t\sqrt{L_{\alpha}})}(x,y)\subset D(t^{-1}r).
\end{equation} 
\par Fixing an even function $\vartheta\in C_{c}^{\infty}$ which is identically one on $\{\arrowvert s\arrowvert\leq 1\}$ and supported on $\{\arrowvert s\arrowvert\leq 2\}$, let us set $j_0=[-\log_2\delta]-1$ and $\zeta_{j_o}(s):=\vartheta(2^{-j_0}s)$ and $\zeta_{j}(s):=\vartheta(2^{-j}s)-\vartheta(2^{-j+1}s)$ for $j>j_0$. Then, we have $\sum_{j\geq j_0}\zeta_{j}(s)\equiv1,\forall s>0$.
Recalling that $\phi_{\delta}(s)=\phi(\delta^{-1}(1-s^2))$, for $j\geq j_0$, we set
\begin{equation}\label{B3}
\phi_{\delta,j}(s)=\frac{1}{2\pi}\int_{0}^{\infty}\zeta_j(u)\hat{\phi_{\delta}}(u)\cos(su)du.
\end{equation}
It can be verified that
\begin{equation}\label{B4}
\arrowvert\phi_{\delta,j}(s)\arrowvert\leq\left\{\begin{array}{lr}C_N2^{(j_0-j)N},\qquad\qquad\qquad \arrowvert s\arrowvert \in [1-2\delta,1+2\delta],\\
C_N2^{j-j_0}(1+2^j\arrowvert s-1\arrowvert)^{-N},\quad\quad otherwise,
\end{array}
\right.
\end{equation}
for any $N$ and all $j\geq j_0$ (see \cite[3.16, p.23]{CD}). By the Fourier inversion formula, we obtain
\begin{equation}\label{B5}
\phi_{\delta}(s)=\phi(\delta^{-1}(1-s^2))=\sum_{j\geq j_0}\phi_{\delta,j}(s),\quad s>0.
\end{equation}
By \eqref{B2}, we have 
\begin{equation}\label{B6}
supp\,K_{\phi_{\delta,j}(\sqrt{L_{\alpha}}/t)}(x,y)\subset D(t^{-1}2^{j+1}):=\{(x,y)\in \mathbb{R}_{+}^{d}\times\mathbb{R}_{+}^{d}:\arrowvert x-y\arrowvert\leq c_02^{j+1}/t\}.
\end{equation}
Now from \eqref{LPC}, it follows that
\begin{equation}\label{B7}
\arrowvert \mathfrak{S}_{\delta}^{l}f(x)\arrowvert ^2\leq C \sum_{k\geq 1-\log_2\sqrt{\delta}}\int_{2^{k-1}}^{2^{k+2}}\arrowvert \phi\left(\delta^{-1}\left(1-\frac{L_{\alpha}}{t^2}\right)\right)\psi_{k}(\sqrt{L_{\alpha}})f(x)\arrowvert^2\frac{dt}{t}.
\end{equation}
For $k\geq 1-\log_2\sqrt{\delta}$ and $j\geq j_0$, let us set
\begin{equation*}\label{B8}
E^{k,j}(t):=\left\langle\left\lvert\phi_{\delta,j}(t^{-1}\sqrt{L_{\alpha}})\psi_{k}(\sqrt{L_{\alpha}})f(x)\right\rvert^2,(1+\arrowvert x\arrowvert)^{-\beta} \right\rangle_{\alpha}. 
\end{equation*}
Using \eqref{B5}, \eqref{B7} and Minkowski's inequality, we have
\begin{equation}\label{B9}
\int_{\mathbb{R}_{+}^{d}}\arrowvert \mathfrak{S}_{\delta}^{l}f(x)\arrowvert ^2(1+\arrowvert x\arrowvert)^{-\beta}d\mu_{\alpha}(x)\leq C \sum_{k\geq 1-\log_2\sqrt{\delta}}\left(\sum_{j\geq j_0}\left(\int_{2^{k-1}}^{2^{k+2}}E^{k,j}(t)\frac{dt}{t}\right)^{1/2}\right)^2.
\end{equation}
In order to make use the localization property \eqref{B6}, we need to decompose $\mathbb{R}_{+}^{d}$ into disjoint cubes of side length $c_02^{j-k+2}$. For a given $k\in \mathbb{Z}$, $j\geq j_0$, and $m=(m_1,\cdot\cdot\cdot,m_d)\in \mathbb{Z}_{\geq 0}^{d}$, let us set 
\begin{equation*}\label{B10}
Q_{m}=\left[c_02^{j-k+2}(m_1-\frac{1}{2}), c_02^{j-k+2}(m_1+\frac{1}{2})\right)\times\cdot\cdot\cdot\times\left[c_02^{j-k+2}(m_d-\frac{1}{2}), c_02^{j-k+2}(m_d+\frac{1}{2})\right),
\end{equation*}
which are disjoint dyadic cubes centered at $x_m:=c_02^{j-k+2}m$ with side length $c_02^{j-k+2}$ and $\mathbb{R}_{+}^{d}=\cup_{m\in\mathbb{Z}_{\geq 0}^{d}}Q_m$. For each $m$, we define $\widetilde{Q_m}$ by setting
\begin{equation*}\label{B11}
\widetilde{Q_m}:=\bigcup_{m'\in\mathbb{Z}_{\geq 0}^{d}:dist(Q_{m'},Q_{m})\leq\sqrt{d}c_02^{j-k+3}}Q_{m'},
\end{equation*}
and denote
\begin{equation*}\label{B12}
M_0:=\{m\in\mathbb{Z}_{\geq 0}^{d}:Q_0\cap\widetilde{Q_m}\neq \emptyset\}.
\end{equation*}
For $t\in[2^{k-1},2^{k+2}]$ it follows by \eqref{B6} that $\chi_{Q_m}\phi_{\delta,j}(t^{-1}\sqrt{L_{\alpha}})\chi_{Q_{m'}}=0$ if $\widetilde{Q_m}\cap Q_{m'}=\emptyset$ for every $j,k$. Hence, it is clear that
\begin{equation*}\label{B13}
\phi_{\delta,j}(t^{-1}\sqrt{L_{\alpha}})\psi_{k}(\sqrt{L_{\alpha}})f=\sum_{m,m':dist(Q_m,Q_{m'})<t^{-1}2^{j+2}}\chi_{Q_m}\phi_{\delta,j}(t^{-1}\sqrt{L_{\alpha}})\chi_{Q_{m'}}\psi_{k}(\sqrt{L_{\alpha}}),
\end{equation*}
which gives 
\begin{equation}\label{B14}
E^{k,j}(t)\leq C\sum_{m}\left\langle\left\lvert\chi_{Q_m}\phi_{\delta,j}(t^{-1}\sqrt{L_{\alpha}})\chi_{\widetilde{Q_m}}\psi_{k}(\sqrt{L_{\alpha}})f(x)\right\rvert^2,(1+\arrowvert x\arrowvert)^{-\beta} \right\rangle_{\alpha}.
\end{equation}
To exploit orthogonality generated by the disjointness of spectral support, we further decompose $\phi_{\delta,j}$ which is not compactly supported. We choose an even function $\theta\in C_{c}^{\infty}(-4,4)$ such that $\theta(s)=1$ for $s\in(-2,2)$. Set
\begin{equation}\label{B15}
\varphi_{0,\delta}(s):=\theta(\delta^{-1}(1-s)),\quad\varphi_{\mathscr{l},\delta}(s):=\theta(2^{-\mathscr{l}}\delta^{-1}(1-s))-\theta(2^{-\mathscr{l}+1}\delta^{-1}(1-s))
\end{equation}
for all $\mathscr{l}\geq1$ such that $1=\sum_{\mathscr{l}=0}^{\infty}\varphi_{\mathscr{l},\delta}(s)$ and $\phi_{\delta,j}=\sum_{\mathscr{l}=0}^{\infty}(\varphi_{\mathscr{l},\delta}\phi_{\delta,j})(s)$ for all $s>0$. We put it into \eqref{B14} to write
\begin{equation}\label{B16}
\left(\int_{2^{k-1}}^{2^{k+2}}E^{k,j}(t)\frac{dt}{t}\right)^{1/2}\leq\sum_{\mathscr{l}=0}^{\infty}\left(\sum_{m}\int_{2^{k-1}}^{2^{k+2}}E_{m}^{k,j,\mathscr{l}}(t)\frac{dt}{t}\right)^{1/2},
\end{equation}
where 
\begin{equation*}
E_{m}^{k,j,\mathscr{l}}(t)=\left\langle\left\lvert\chi_{Q_m}(\varphi_{\mathscr{l},\delta}\phi_{\delta,j})(t^{-1}\sqrt{L_{\alpha}})\chi_{\widetilde{Q_m}}\psi_{k}(\sqrt{L_{\alpha}})f(x)\right\rvert^2,(1+\arrowvert x\arrowvert)^{-\beta} \right\rangle_{\alpha}.
\end{equation*}
Recalling that \eqref{A3} and \eqref{A4}, we observe that, for every $t\in I_i$, it is possible that $\varphi_{\mathscr{l},\delta}(s/t)\eta_{i'}(s) \neq 0 $ only when $i-2^{\mathscr{l}+6}\leq i'\leq i+2^{\mathscr{l}+6}$. Hence,
\begin{equation*}
(\varphi_{\mathscr{l},\delta}\phi_{\delta,j})(t^{-1}\sqrt{L_{\alpha}})=\sum_{i'=i-2^{\mathscr{l}+6}}^{i+2^{\mathscr{l}+6}}(\varphi_{\mathscr{l},\delta}\phi_{\delta,j})(t^{-1}\sqrt{L_{\alpha}})\eta_{i'}(\sqrt{L_{\alpha}}),\quad t\in I_i.
\end{equation*}
From this and Cauchy-Schwarz's inequality, we have
\begin{equation*}
E_{m}^{k,j,\mathscr{l}}(t)\leq C2^{\mathscr{l}}\sum_{i'=i-2^{\mathscr{l}+6}}^{i+2^{\mathscr{l}+6}}E_{m,i'}^{k,j,\mathscr{l}}(t)
\end{equation*}
for $t\in I_i$ where
\begin{equation*}
E_{m,i'}^{k,j,\mathscr{l}}(t)=\left\langle\left\lvert\chi_{Q_m}(\varphi_{\mathscr{l},\delta}\phi_{\delta,j})(t^{-1}\sqrt{L_{\alpha}})\eta_{i'}(\sqrt{L_{\alpha}})\left[\chi_{\widetilde{Q_m}}\psi_{k}(\sqrt{L_{\alpha}})f\right](x)\right\rvert^2,(1+\arrowvert x\arrowvert)^{-\beta} \right\rangle_{\alpha}.
\end{equation*}
Combining this with \eqref{B16}, we get
\begin{equation}\label{B17}
\left(\int_{2^{k-1}}^{2^{k+2}}E^{k,j}(t)\frac{dt}{t}\right)^{1/2}\leq C\sum_{\mathscr{l}=0}^{\infty}2^{\mathscr{l}/2}\left(\sum_{m}\sum_{i}\int_{I_i}\sum_{i'=i-2^{\mathscr{l}+6}}^{i+2^{\mathscr{l}+6}}E_{m,i'}^{k,j,\mathscr{l}}(t)\frac{dt}{t}\right)^{1/2}.
\end{equation}
To continue, we distinguish two cases: $j>k$; and $j\leq k$. The first case is more involved and we need to distinguish several sub-cases which we separately handle.\\
\par\textbf{Case $j>k$} From the above inequality  \eqref{B17} we have
\begin{equation*}
\left(\int_{2^{k-1}}^{2^{k+2}}E^{k,j}(t)\frac{dt}{t}\right)^{1/2}\leq I_1(j,k)+I_2(j,k)+I_3(j,k),
\end{equation*}
where 
\begin{equation}\label{B18}
I_1(j,k):=\sum_{\mathscr{l}=0}^{[-\log_2\delta]-3}2^{\mathscr{l}/2}\left(\sum_{m\in M_{0}}\sum_{i}\int_{I_i}\sum_{i'=i-2^{\mathscr{l}+6}}^{i+2^{\mathscr{l}+6}}E_{m,i'}^{k,j,\mathscr{l}}(t)\frac{dt}{t}\right)^{1/2},
\end{equation}
\begin{equation}\label{B19}
I_2(j,k):=\sum_{\mathscr{l}=[-\log_2\delta]-2}^{\infty}2^{\mathscr{l}/2}\left(\sum_{m\in M_{0}}\sum_{i}\int_{I_i}\sum_{i'=i-2^{\mathscr{l}+6}}^{i+2^{\mathscr{l}+6}}E_{m,i'}^{k,j,\mathscr{l}}(t)\frac{dt}{t}\right)^{1/2},
\end{equation}
\begin{equation}\label{B20}
I_3(j,k):=\sum_{\mathscr{l}=0}^{\infty}2^{\mathscr{l}/2}\left(\sum_{m\notin M_{0}}\sum_{i}\int_{I_i}\sum_{i'=i-2^{\mathscr{l}+6}}^{i+2^{\mathscr{l}+6}}E_{m,i'}^{k,j,\mathscr{l}}(t)\frac{dt}{t}\right)^{1/2}.
\end{equation}
We will now estimate $I_1(j,k)$, $I_2(j,k)$, and $I_3(j,k)$ one by one.\\
\\
\textit{Estimate of the term  $I_1(j,k)$.}   We claim that, for any $N>0$,
\begin{equation}\label{B21}
I_1(j,k)\leq C_N 2^{(j_0-j)N}\left(\delta A_{\beta,d}^{\varepsilon}(\delta)\right)^{1/2}\left(\int_{\mathbb{R}_{+}^{d}}\arrowvert \psi_k(\sqrt{L_{\alpha}})f(x)\arrowvert ^2(1+\arrowvert x\arrowvert)^{-\beta}d\mu_{\alpha}(x)\right)^{1/2}
\end{equation}
where $A_{\beta,d}^{\varepsilon}(\delta)$ is defined in \eqref{S4}. Let us consider the case $2(\arrowvert\alpha\arrowvert_1+d)>1$. For \eqref{B21}, it suffices to show
\begin{equation}\label{B22}
E_{m,i'}^{k,j,\mathscr{l}}(t)\leq C_N2^{-\mathscr{l}N}2^{(j_0-j)N}2^{k}\delta \int_{\mathbb{R}_{+}^{d}}\arrowvert \eta_{i'}(\sqrt{L_{\alpha}})[ \chi_{\widetilde{Q_m}}\psi_k(\sqrt{L_{\alpha}})f](x)\arrowvert ^2d\mu_{\alpha}(x)
\end{equation}
for any $N>0$ while $t\in I_i$ being fixed and $i-2^{\mathscr{l}+6}\leq i'\leq i+2^{\mathscr{l}+6}$. Indeed, since the support of $\eta_{i}$ are boundedly overlapping, \eqref{B22} gives that
\begin{equation*}
\sum_{i}\int_{I_i}\sum_{i'=i-2^{\mathscr{l}+6}}^{i+2^{\mathscr{l}+6}}E_{m,i'}^{k,j,\mathscr{l}}(t)\frac{dt}{t}\leq C_N2^{-\mathscr{l}(N-1)}2^{(j_0-j)N}2^{k}\delta^2\lVert \chi_{\widetilde{Q_m}}\psi_k(\sqrt{L_{\alpha}})f\rVert_2^2.
\end{equation*}
Now we take summation over $\mathscr{l}$ and $m\in M_{0}$ in \eqref{B18} to get
\begin{equation*}
I_1(j,k)\leq C_N 2^{j_0\beta/2}2^{(j_0-j)(N-\beta)/2}2^{k(1-\beta)/2}\delta\left(2^{(k-j)\beta}\sum_{m\in M_0}\lVert \chi_{\widetilde{Q_m}}\psi_k(\sqrt{L_{\alpha}})f\rVert_2^2\right)^{1/2}.
\end{equation*}
Since $j>k$ and $m\in M_0$, we note that $(1+\arrowvert x\arrowvert)^{\beta}\leq C2^{(j-k)\beta}$ if $x\in \widetilde {Q_m}$. It follows that
\begin{equation}\label{BB}
2^{(k-j)\beta}\sum_{m\in M_0}\lVert \chi_{\widetilde{Q_m}}\psi_k(\sqrt{L_{\alpha}})f\rVert_2^2\leq C\int_{\mathbb{R}_{+}^{d}}\arrowvert \psi_k(\sqrt{L_{\alpha}})f(x)\arrowvert ^2(1+\arrowvert x\arrowvert)^{-\beta}d\mu_{\alpha}(x).
\end{equation}
Noting that $j_0=[-\log_2\delta]-1$ and $k\geq [-\frac{1}{2}\log_2\delta]$, we obtain
\begin{equation*}
I_1(j,k)\leq C_N 2^{(j_0-j)(N-\beta)/2}\delta^{3/4-\beta/4}\left(\int_{\mathbb{R}_{+}^{d}}\arrowvert \psi_k(\sqrt{L_{\alpha}})f(x)\arrowvert ^2(1+\arrowvert x\arrowvert)^{-\beta}d\mu_{\alpha}(x)\right)^{1/2},
\end{equation*}
which gives \eqref{B21} since $N>0$ is arbitrary. We now proceed to prove \eqref{B22}. Note that supp\,$\varphi_{\mathscr{l},\delta}\subseteq (1-2^{\mathscr{l}+2}\delta,1+2^{\mathscr{l}+2}\delta)$, and so supp\,$(\varphi_{\mathscr{l},\delta}\phi_{\delta,j})(\cdot/t)\subset [t(1-2^{\mathscr{l}+2}\delta), t(2^{\mathscr{l}+2}\delta)]$. Thus, setting $R=[t(1+2^{\mathscr{l}+2}\delta)]$, by \eqref{ETrace1} we get
\begin{equation}\label{B23}
E_{m,i'}^{k,j,\mathscr{l}}(t)\leq R\lVert (\varphi_{\mathscr{l},\delta}\phi_{\delta,j})(R\cdot/t)\rVert_{R^2,2}^2\int_{\mathbb{R}_{+}^{d}}\arrowvert \eta_{i'}(\sqrt{L_{\alpha}})[ \chi_{\widetilde{Q_m}}\psi_k(\sqrt{L_{\alpha}})f](x)\arrowvert ^2d\mu_{\alpha}(x)
\end{equation}
for $0\leq \mathscr{l}\leq [-\log_2\delta]-3$. By formula (3.34) in \cite{CD}, we have
\begin{equation}\label{BBB}
\lVert (\varphi_{\mathscr{l},\delta}\phi_{\delta,j})(R\cdot/t)\rVert_{R^2,2}\leq\lVert (\varphi_{\mathscr{l},\delta}\phi_{\delta,j})((1+2^{\mathscr{l}+2}\delta)\cdot)\rVert_{\infty}(2^{\mathscr{l}+3}\delta)^{1/2}\leq C_N2^{(j_0-j)N}2^{-\mathscr{l}N}(2^{\mathscr{l}+3}\delta)^{1/2}
\end{equation}
for any $N>0$ and $\mathscr{l}\geq 0$. Since $R\sim 2^{k}$, combining this with \eqref{B23}, we get the desired \eqref{B22}.
\par For the case $2(\arrowvert \alpha\arrowvert_1+d)\leq 1$, it can be handled in the same manner as before, the only difference is that we use \eqref{ETrace2} instead of \eqref{ETrace1}, so we omit those steps.\\
\\
\textit{Estimate of the term  $I_2(j,k)$.} We can deduce as before that
\begin{equation*}
E_{m,i'}^{k,j,\mathscr{l}}(t)\leq \lVert (\varphi_{\mathscr{l},\delta}\phi_{\delta,j})(\cdot/t)\rVert_{\infty}^2\int_{\mathbb{R}_{+}^{d}}\arrowvert \eta_{i'}(\sqrt{L_{\alpha}})[ \chi_{\widetilde{Q_m}}\psi_k(\sqrt{L_{\alpha}})f](x)\arrowvert ^2d\mu_{\alpha}(x).
\end{equation*}
From the definition of $\varphi_{\mathscr{l},\delta}$ and \eqref{B4} we have
\begin{equation*}
\lVert (\varphi_{\mathscr{l},\delta}\phi_{\delta,j})(\cdot/t)\rVert_{\infty}\leq C_N2^{j-j_0}(2^{j+\mathscr{l}}\delta)^{-N},\quad \mathscr{l}\geq[-\log_2\delta]-2.
\end{equation*}
Thus, it follows that
\begin{equation*}
E_{m,i'}^{k,j,\mathscr{l}}(t)\leq C_N2^{2(j-j_0)}(2^{j+\mathscr{l}}\delta)^{-2N}\int_{\mathbb{R}_{+}^{d}}\arrowvert \eta_{i'}(\sqrt{L_{\alpha}})[ \chi_{\widetilde{Q_m}}\psi_k(\sqrt{L_{\alpha}})f](x)\arrowvert ^2d\mu_{\alpha}(x).
\end{equation*}
After putting this in \eqref{B19}, we take summation over $m\in M_{0}$ to obtain
\begin{equation*}
I_2(j,k)\leq C_N\delta^{1/2-N}2^{(j-j_0)}2^{-N_j}2^{(j-k)\alpha/2}\sum_{\mathscr{l}=[-\log_2\delta]-2}^{\infty}2^{-\mathscr{l}(N-2)}\left(2^{(k-j)\beta}\sum_{m\in M_0}\lVert \chi_{\widetilde{Q_m}}\psi_k(\sqrt{L_{\alpha}})f\rVert_2^2\right)^{1/2}.
\end{equation*}
Since $j>k$, $j_0=[-\log_2\delta]-1$ and $k\geq[-\frac{1}{2}\log_2\delta]$, using \eqref{BB} and taking sum over $\mathscr{l}$ we obtain
\begin{equation}\label{B24}
I_2(j,k)\leq C_N2^{(j_0-j)N}\delta^N \left(\int_{\mathbb{R}_{+}^{d}}\arrowvert \psi_k(\sqrt{L_{\alpha}})f(x)\arrowvert ^2(1+\arrowvert x\arrowvert)^{-\beta}d\mu_{\alpha}(x)\right)^{1/2}
\end{equation}
for any $N>0$.\\
\\
\textit{Estimate of the term  $I_3(j,k)$.} We now prove that
\begin{equation}\label{B25}
I_3(j,k)\leq C_N2^{(j_0-j)N}\delta^{1/2} \left(\int_{\mathbb{R}_{+}^{d}}\arrowvert \psi_k(\sqrt{L_{\alpha}})f(x)\arrowvert ^2(1+\arrowvert x\arrowvert)^{-\beta}d\mu_{\alpha}(x)\right)^{1/2}.
\end{equation}
Recall that $x_m=2^{j-k+2}m$ is the center of the cube $Q_m$. We begin with making an observation that
\begin{equation}\label{B26}
C^{-1}(1+\arrowvert x_m\arrowvert)\leq 1+\arrowvert x\arrowvert\leq C(1+\arrowvert x_m\arrowvert),\quad x\in Q_m
\end{equation}
provided that $m\notin M_{0}$. By this observation, we can deduce that 
\begin{equation*}
E_{m,i'}^{k,j,\mathscr{l}}(t)\leq (1+\arrowvert x_m\arrowvert)^{-\beta}\lVert (\varphi_{\mathscr{l},\delta}\phi_{\delta,j})(t^{-1}\sqrt{L_{\alpha}})\rVert_{2\rightarrow 2}^{2}\lVert  \eta_{i'}(\sqrt{L_{\alpha}})[ \chi_{\widetilde{Q_m}}\psi_k(\sqrt{L_{\alpha}})f]\rVert_2^2.
\end{equation*}
Since $\lVert (\varphi_{\mathscr{l},\delta}\phi_{\delta,j})(t^{-1}\sqrt{L_{\alpha}})\rVert_{2\rightarrow 2}\leq \lVert (\varphi_{\mathscr{l},\delta}\phi_{\delta,j})\rVert_{\infty}$ ,it follows from \eqref{BBB} that we have
\begin{equation*}
E_{m,i'}^{k,j,\mathscr{l}}(t)\leq (1+\arrowvert x_m\arrowvert)^{-\beta}2^{2(j_0-j)N}2^{-2\mathscr{l}N}\lVert  \eta_{i'}(\sqrt{L_{\alpha}})[ \chi_{\widetilde{Q_m}}\psi_k(\sqrt{L_{\alpha}})f]\rVert_2^2.
\end{equation*}
Using this and disjointness of the spectral supports, we obtain
\begin{equation*}
\begin{aligned}
\sum_{m\notin M_{0}}\sum_{i}\int_{I_i}\sum_{i'=i-2^{\mathscr{l}+6}}^{i+2^{\mathscr{l}+6}}E_{m,i'}^{k,j,\mathscr{l}}(t)\frac{dt}{t}&\leq C2^{2(j_0-j)N}2^{-2\mathscr{l}N}\delta \sum_{m\notin M_{0}}(1+\arrowvert x_m\arrowvert)^{-\beta}\lVert \chi_{\widetilde{Q_m}}\psi_k(\sqrt{L_{\alpha}})f\rVert_2^2\\
&\leq C2^{2(j_0-j)N}2^{-2\mathscr{l}N}\delta \sum_{m}\int_{\widetilde{Q_m}}\arrowvert \chi_{\widetilde{Q_m}}\psi_k(\sqrt{L_{\alpha}})f\arrowvert^2 (1+\arrowvert x\arrowvert)^{-\beta}d\mu_{\alpha}(x)\\
&\leq C2^{2(j_0-j)N}2^{-2\mathscr{l}N}\delta \int_{\mathbb{R}_+^d}\arrowvert \psi_k(\sqrt{L_{\alpha}})f\arrowvert^2 (1+\arrowvert x\arrowvert)^{-\beta}d\mu_{\alpha}(x).
\end{aligned}
\end{equation*}
Finally, recalling \eqref{B20} and taking sum over $\mathscr{l}$ yields the estimate \eqref{B25}.
\par Therefore, since $\delta\leq \delta A_{\beta,d}^{\varepsilon}(\delta)$, we thus deduce from \eqref{B21}, \eqref{B25} and \eqref{B25} that
\begin{equation}\label{B27}
\int_{2^{k-1}}^{2^{k+2}}E^{k,j}(t)\frac{dt}{t}\leq C_N2^{2(j_0-j)N} \delta A_{\beta,d}^{\varepsilon}(\delta)\int_{\mathbb{R}_+^d}\arrowvert \psi_k(\sqrt{L_{\alpha}})f\arrowvert^2 (1+\arrowvert x\arrowvert)^{-\beta}d\mu_{\alpha}(x)
\end{equation}
for any $N>0$ if $j>k $.
\\
\par \textbf{Case 2: $j\leq k$.}
In this case, the side length of each $Q_m$ is less than 4, then \eqref{B26} holds for any $m\in \mathbb{Z}_{\geq 0}^{d}$. Thus, the same argument in the proof of \eqref{B25} works without modification. We can deduce that
\begin{equation}\label{B28}
\int_{2^{k-1}}^{2^{k+2}}E^{k,j}(t)\frac{dt}{t}\leq C_N2^{2(j_0-j)N} \delta \int_{\mathbb{R}_+^d}\arrowvert \psi_k(\sqrt{L_{\alpha}})f\arrowvert^2 (1+\arrowvert x\arrowvert)^{-\beta}d\mu_{\alpha}(x)
\end{equation}
for any $N>0$.
\\
\\
\textit{Complection of the proof of \eqref{high}.}
\par By \eqref{B27}, \eqref{B28} and $\delta\leq \delta A_{\beta,d}^{\varepsilon}(\delta)$, we now have the estimate \eqref{B27} for any $j\geq j_0$ and $k$. Putting \eqref{B27} in the right hand side of \eqref{B9} and then taking sum over $j$, we obtain
\begin{equation}
\int_{\mathbb{R}_{+}^{d}}\arrowvert \mathfrak{S}_{\delta}^{l}f(x)\arrowvert ^2(1+\arrowvert x\arrowvert)^{-\beta}d\mu_{\alpha}(x)\leq C \delta A_{\beta,d}^{\varepsilon}(\delta)\int_{\mathbb{R}_+^d}\sum_{k}\arrowvert \psi_k(\sqrt{L_{\alpha}})f\arrowvert^2 (1+\arrowvert x\arrowvert)^{-\beta}d\mu_{\alpha}(x).
\end{equation}
Using the Littlewood-Paley inequality \eqref{PL1.1}, we get the estimate \eqref{Ahigh} and this completes the proof of Lemma \ref{SL}.
\end{appendix}
\section*{Data availability}
No data was used for the research described in the article.
\section*{Acknowledgments}
The author would like to express sincere gratitude to his supervisor, Professor Zhiwen Duan, for his invaluable advice and encouragement. The author also wishes to thank Hui Xu for her careful reading of the manuscript and for pointing out some printing errors, which have helped improve the clarity of the paper. 

\end{document}